\def\AA{\mathbb{ A}}
\def\R{{\mathbb R}}
\def\N{{\mathbb N}}
\def\BB{{\mathcal B}}
\def\HH{{\mathcal H}}
\def\MM{{\mathcal M}}
\def\PP{{\mathcal P}}
\def\RR{{\mathcal R}}
\def\SS{{\mathcal S}}
\def\LL{{\mathcal L}}
\def\TT{{\mathcal T}}
\def\UU{\mathcal U}
\def\XX{{\mathcal X}}
\def\YY{{\mathcal Y}}
\def\ZZ{{\mathcal Z}}
\def\VV{{\mathcal V}}
\def\tend{{t_{\rm end}}}
\def\uint{u^{{\rm int}}}
\def\vint{v^{{\rm int}}}
\def\norm#1#2{\|#1\|_{#2}}
\def\snorm#1#2{\vert #1\vert_{#2}}
\def\bsnorm#1#2{\vert #1\vert_{{\rm b},#2}}
\def\set#1#2{\big\{#1\,:\,#2\big\}}
\def\eps{\varepsilon}
\newcommand{\normHmeh}[3][]{#1\|#2#1\|_{H^{-1/2}(#3)}}
\newcommand{\normLtwo}[3][]{#1\|#2#1\|_{L^2(#3)}}
\newcommand{\normHeh}[3][]{#1\|#2#1\|_{H^{1/2}(#3)}}
\def\normL2#1#2{\|#1\|_{L^2(#2)}}
\newcommand{\dual}[3][]{#1\langle#2\,,\,#3#1\rangle}
\newcounter{constantsnumber}
\def\namec#1#2{%
 \ifthenelse{\equal{#1}{rel}}{C_{\rm rel}}{%
  \ifthenelse{\equal{#1}{mesh}}{C_{\rm mesh}}{%
  \ifthenelse{\equal{#1}{sz}}{C_{\rm sz}}{%
  \ifthenelse{\equal{#1}{dislocrel}}{C_{\rm dlr}}{%
  \ifthenelse{\equal{#1}{eff}}{C_{\rm eff}}{%
  \ifthenelse{\equal{#1}{main}}{C_{\rm V}}{%
  \ifthenelse{\equal{#1}{opt}}{C_{\rm opt}}{%
  \ifthenelse{\equal{#1}{normequiv}}{C_{\rm norm}}{%
  \ifthenelse{\equal{#1}{reliable}}{C_{\rm rel}}{%
  \ifthenelse{\equal{#1}{efficient}}{C_{\rm eff}}{%
  \ifthenelse{\equal{#1}{dlr}}{C_{\rm dlr}}{%
  \ifthenelse{\equal{#1}{stable}}{C_{\rm stab}}{%
  \ifthenelse{\equal{#1}{reduction}}{C_{\rm red}}{%
   \ifthenelse{\equal{#1}{unibound}}{C_{\rm hot}}{%
    \ifthenelse{\equal{#1}{hotConst}}{C_{\rm hot}}{%
   \ifthenelse{\equal{#1}{inverseK}}{C_{\rm K}}{%
  \ifthenelse{\equal{#1}{refined}}{C_{\rm ref}}{%
  \ifthenelse{\equal{#1}{estconv}}{C_{\rm est}}{%
  \ifthenelse{\equal{#1}{optimal}}{C_{\rm opt}}{%
  \ifthenelse{\equal{#1}{qo}}{C_{\rm qo}}{%
  \ifthenelse{\equal{#1}{mon}}{C_{\rm mon}}{%
  \ifthenelse{\equal{#1}{cea}}{C_{\mbox{\scriptsize C\'ea}}}{%
  \ifthenelse{\equal{#2}{newcounter}}{\refstepcounter{constantsnumber}\label{const#1}}{}C_{\ref{const#1}}}%
}}}}}}}}}}}}}}}}}}}}}}
\newcounter{contractionnumber}
\def\nameq#1#2{%
  \ifthenelse{\equal{#1}{reduction}}{q_{\rm red}}{%
  \ifthenelse{\equal{#1}{estconv}}{q_{\rm est}}{%
  \ifthenelse{\equal{#1}{cea}}{q_{\mbox{\scriptsize C\'ea}}}{%
  \ifthenelse{\equal{#2}{newcounter}}{\refstepcounter{contractionnumber}\label{contraction#1}}{}q_{\ref{contraction#1}}}%
}}}
\def\namer#1#2{%
  \ifthenelse{\equal{#1}{reduction}}{\rho_{\rm red}}{%
  \ifthenelse{\equal{#1}{estconv}}{\rho_{\rm est}}{%
  \ifthenelse{\equal{#1}{cea}}{\rho_{\mbox{\scriptsize C\'ea}}}{%
  \ifthenelse{\equal{#1}{qo}}{\rho_{\mbox{\scriptsize qo}}}{%
  \ifthenelse{\equal{#2}{newcounter}}{\refstepcounter{contractionnumber}\label{contraction#1}}{}\rho_{\ref{contraction#1}}}%
}}}}
\newtheorem{theorem}{Theorem}
\newtheorem{proposition}[theorem]{Proposition}
\newtheorem{lemma}[theorem]{Lemma}
\newtheorem{algorithm}[theorem]{Algorithm}
\newtheorem{definition}[theorem]{Definition}
\newtheorem{remark}[theorem]{Remark}
\def\T{\mathbb T}
\begin{document}
\title%
{Inf-sup stability implies quasi-orthogonality}
\date{\today}
\thanks{Funded by the Deutsche Forschungsgemeinschaft (DFG, German Research Foundation) -- Project-ID 258734477 -- SFB 1173 as well as the Austrian Science Fund (FWF)
under the special research program Taming complexity in PDE systems (grant SFB F65).}

\subjclass[2020]{65N30, 65N50, 15A23}
\author{Michael Feischl}
\address{Institute for Analysis and Scientific Computing,
TU Wien, Wiedner Hauptstra\ss e 8-10, 1040 Vienna.}
\email{michael.feischl@tuwien.ac.at}
\begin{abstract}
We prove new optimality results for adaptive mesh refinement algorithms for non-symmetric, indefinite, and time-dependent problems by proposing a generalization of quasi-orthogonality which follows directly from the inf-sup stability of the underlying problem.
This completely removes a central technical difficulty in modern proofs of optimal convergence of adaptive mesh refinement algorithms and leads to  simple optimality proofs for the Taylor-Hood discretization of the stationary Stokes problem, a  finite-element/boundary-element discretization of an unbounded transmission problem, and an adaptive time-stepping scheme for parabolic equations. The main technical tool are new stability bounds for the $LU$-factorization of matrices together with a recently established connection between quasi-orthogonality and matrix factorization. 
\end{abstract}

\maketitle

\section{Introduction}
Adaptive mesh-refinement for finite element methods has a huge impact in the area of computational PDEs since it was initially proposed and developed for computational fluid dynamics. Several works in the engineering  literature (see, 
e.g.,~\cite{adaptivefirst,adaptiveeng} and the references therein) show the tremendous success in many practical applications in computational sciences and engineering. 
Related books from the mathematical literature, 
e.g.,~\cite{ao00,verf}
provide many examples of a~posteriori error estimators that steer adaptive 
algorithms. 

The quest for optimal adaptive algorithms started with D\"orfler's work~\cite{d1996} proposing a strategy to mark elements for refinement. The first convergence proof in~\cite{mns} and the first convergence with rates in~\cite{bdd} laid the foundation for the modern theory of rate-optimal convergence of adaptive algorithms, which started in~\cite{stevenson07, ckns} and was later summarized in~\cite{axioms}. Those papers propose what we now call the standard adaptive loop 
\begin{align*}
 \fbox{\tt solve}\longrightarrow\fbox{\tt estimate}\longrightarrow\fbox{\tt mark}\longrightarrow \fbox{\tt refine}
\end{align*}
The new idea inspired a surge in mathematical activity in this area, see e.g., the optimality proofs in
~\cite{ks,cn} for conforming methods, in~\cite{rabus10,BeMao10,ncstokes3}    
for nonconforming methods, in~\cite{LCMHJX,CR2012,HuangXu} for mixed formulations, and in~\cite{fkmp,gantumur} for integral equations. 

However, as soon as the problem at hand is outside of the symmetric, positive definite regime, the lack of orthogonality of Galerkin solutions turns out to be a major hurdle in optimality proofs.
For symmetric problems, Galerkin orthogonality implies for increasingly accurate nested Galerkin approximations $u_\ell, u_{\ell+1},\ldots$ of the exact solution $u$ that
\begin{align}\label{eq:orth}
 \norm{u-u_{\ell+1}}{}^2+\norm{u_{\ell+1}-u_\ell}{}^2 =\norm{u-u_\ell}{}^2 
\end{align}
in the energy norm $\norm{\cdot}{}$. This Pythagoras-type identity {has been used for the first convergence proofs of adaptive finite element methods in~\cite{mns, doerfler} and} is a key tool in the optimality proofs of~\cite{stevenson07,ckns}. If the norm, however, is not induced by a symmetric scalar product corresponding to the PDE,~\eqref{eq:orth} is not true in general and hence current proofs of rate-optimality do not work anymore. The work~\cite{axioms} proposed the so-called \emph{general quasi-orthogonality} in order to circumvent the lack of orthogonality for non-symmetric and indefinite problems.

This has been successfully used for non-symmetric second-order PDEs of the form
$
 -{\rm div}(A\nabla u)+b\cdot \nabla u + cu=f,
$
where unconditional optimality of the adaptive algorithm is proved in~\cite{nonsymm} (note that under some conditions on $b$ and the initial mesh, already~\cite{cn} proved optimality). 

Those proofs, however, rely heavily on the fact that the non-symmetric part $b\cdot \nabla u$ of the operator is only a compact perturbation.
In order to tackle harder problems such as the Stokes problem (the negative definite part is more than a compact perturbation), the recent work~\cite{stokesopt} uncovered an interesting connection between general quasi-orthogonality and the $LU$-factorization of matrices.

The connection can be formulated as follows: Assume that there exists a Riesz basis $B$ of the underlying Hilbert space such that the PDE $\LL u=f$ can be equivalently stated as a matrix equation
\begin{align*}
 Mx=F\quad\text{with}\quad M\in \R^{\N\times\N},\,F\in\R^\N,
\end{align*}
where $M_{vw}=\dual{\LL w}{v}$ for all $v,w\in B$, $F_v:=\dual{f}{v}$, and $u=\sum_{v\in B}x_v v$. If the matrix $M$ has an $LU$-factorization $M=LU$ for lower/upper-triangular infinite matrices $L,U\in\R^{\N\times\N}$ such that
$L,U,L^{-1},U^{-1}\colon \ell_2\to\ell_2$ are bounded operators, then  \emph{general quasi-orthogonality} holds true. This, together with other well-known properties is sufficient to show rate-optimality of the adaptive algorithm.

To exploit this connection,~\cite{stokesopt} requires an extremely technical and problem dependent construction of a suitable Riesz basis that ensures that the matrix $M$ has wavelet-type structure with exponential decay away from the diagonal. This is then used to show that the $LU$-factorization of $M$ exists and is bounded in the correct norms.

The present work removes this major hurdle by first proposing a relaxed version of \emph{general quasi-orthogonality} and then showing that this relaxed orthogonality is a priori satisfied for uniformly inf-sup stable problems. 

The proof relies on two ingredients. First, we show that, in a certain sense, even moderately unbounded $LU$-factors are still sufficient to show optimality. Second, we prove that the spectral norms of the $LU$-factors of a matrix $M\in\R^{n\times n}$ with uniformly invertible principal submatrices are bounded by $\mathcal{O}(n^{1/2-\delta})$ for some $\delta>0$. To the best knowledge of the author, this is the first result of this kind for the $LU$-factorization.  
Both results together completely eliminate the key assumption \emph{general quasi-orthogonality} from the abstract optimality proof in~\cite{axioms} and therefore drastically simplify its application to concrete problems.

\medskip

We use this new approach in three applications: 
\begin{itemize}

\item We provide a first proof of optimal convergence of a standard adaptive algorithm for the Taylor-Hood discretization of the stationary Stokes problem in 3D and vastly simplify the proof of the 2D result from~\cite{stokesopt}. Additionally, we remove the artificial mild mesh-grading condition in~\cite{stokesopt}.

 \item We show optimality for the non-symmetric finite-element/boundary-element discretization of a 3D transmission problem (transparent boundary conditions). This vastly simplifies and generalizes a recent 2D result for the same problem~\cite{fembemopt}. 
 
 \item We propose a Crank-Nicolson method with adaptive choice of time step-size for parabolic problems and prove that it converges with optimal rates under a CFL condition. To the best of the author's knowledge, this is the first optimality result for adaptive mesh-refinement for a time-dependent PDE.
\end{itemize}

We are confident that the new approach will be useful for adaptive algorithms of many other interesting problems, including non-linear problems, time-dependent problems, and also stochastic problems.

\bigskip

The remainder of this work is organized as follows: Section~\ref{sec:abstract} presents the abstract model problem and rigorously states the adaptive algorithm and the notion of optimal convergence. Section~\ref{sec:qoN} introduces the new relaxed form of quasi-orthogonality and shows the connection to rate-optimal convergence of the adaptive algorithm. Section~\ref{sec:lu} show that a mildly unbounded $LU$-factorization of a certain matrix implies the relaxed quasi-orthogonality. Section~\ref{sec:growth} proves that regular matrices satisfy the required boundedness of the $LU$-factors and concludes the proof of the abstract main result. Finally, we apply the abstract framework to the Stokes problem in Section~\ref{sec:stokes},  the finite-element/boundary-element discretization of a full space transmission problem in Section~\ref{sec:fembem}, and to a new adaptive time-stepping scheme for parabolic equations in Section~\ref{sec:time}.

\section{The abstract setting}\label{sec:abstract}
We consider real Hilbert spaces $\XX$ and $\YY$ as well as closed subspaces $\XX_\TT\subseteq \XX$, $\YY_\TT\subseteq \YY$ 
that are based on some triangulation $\TT$ of some underlying polyhedral domain $\Omega\subseteq \R^d$ and satisfy ${\rm dim}(\XX_\TT)={\rm dim}(\YY_\TT)$. We denote the set of all admissible triangulations by $\T$ (we will specify this later) and we consider a bilinear form $a(\cdot,\cdot)\colon \XX\times \YY\to\R$ that is bounded
\begin{align}\label{eq:cont}
 |a(u,v)|\leq C_a\norm{u}{\XX}\norm{v}{\YY}\quad\text{for all }u\in\XX,\,v\in\YY
\end{align}
and uniformly inf-sup stable in the sense
\begin{align}\label{eq:uniforminfsup}
\inf_{u\in\XX}\sup_{v\in\YY}\frac{a(u,v)}{\norm{u}{\XX}\norm{v}{\YY}}\geq \gamma>0\quad\text{and}\quad   \inf_{\TT\in\T}\inf_{u\in\XX_\TT}\sup_{v\in\YY_\TT}\frac{a(u,v)}{\norm{u}{\XX}\norm{v}{\YY}}\geq \gamma>0.
 \end{align}
 Moreover, we assume that for all $v\in \YY\setminus\{0\}$, there exists $u\in \XX$ with $a(u,v)\neq 0$.
This allows us to consider solutions $u\in\XX$ and $u_\TT\in\XX_\TT$ of
\begin{align}\label{eq:weak}
 a(u,v)=f(v)\quad\text{for all }v\in\YY\quad\text{and}\quad a(u_\TT,v)=f(v)\quad\text{for all }v\in\YY_\TT
\end{align}
for some $f\in\YY^\star$.
Moreover, we immediately obtain a C\'ea-type estimate of the form
\begin{align}\label{eq:cea}
 \norm{u-u_\TT}{\XX}\leq \frac{C_a}{\gamma}\min_{v\in\XX_\TT}\norm{u-v}{\XX},
\end{align}
{see~\cite[Theorem~2]{xuzi} for the proof with optimal constant.}
We assume that $\bigcup_{\TT\in\T}\XX_\TT\subseteq \XX$ is dense, which obviously is a necessary condition for convergence of any numerical method.
\subsection{Adaptive mesh refinement}
We consider an initial regular and shape regular triangulation $\TT_0$ of $\Omega$ into compact simplices $T\in\TT_0$. We assume that
$\TT_0$ partitions $\Omega$ 
such that the intersection of two elements $T\neq T'\in\TT_0$ is either a common 
face, a common node, or empty. By $\T$ we denote the set of all regular triangulations that can be generated by iterated application of newest vertex bisection to $\TT_0$ (see, e.g.,~\cite{stevenson08} for details).

We consider standard mesh refinement algorithms that are steered by an error estimator $\eta(\TT) = \eta(\TT,u_\TT,f) = \sqrt{\sum_{T\in\TT} \eta_T(\TT)^2}$ that satisfies $\eta(\TT)\approx \norm{u-u_\TT}{\XX}$.  
\begin{algorithm}\label{alg:adaptive}
	\textbf{Input: } Initial mesh $\TT_0$, parameter $0<\theta<1$.\\
	For $\ell=0,1,2,\ldots$ do:
	\begin{enumerate}
		\item Compute $u_{\ell}:=u_{\TT_\ell}$ from~\eqref{eq:weak}.
		\item Compute error estimate $\eta_T(\TT)$ for all $T\in\TT_\ell$.
		\item Find a set $\MM_\ell\subseteq \TT_\ell$ of minimal 
cardinality such that
		\begin{align}\label{eq:doerfler}
		\sum_{T\in\MM_\ell}\eta_T(\TT_\ell)^2\geq \theta 
\sum_{T\in\TT_\ell}\eta_T(\TT_\ell)^2.
		\end{align}
		\item Use newest-vertex-bisection to refine at least the 
elements in $\MM_\ell$ and to obtain a new mesh $\TT_{\ell+1}$.
	\end{enumerate}
	\textbf{Output: } Sequence of adaptively refined meshes $\TT_\ell$ and 
corresponding approximations $u_\ell\in\XX_\ell:=\XX_{\TT_{\ell}}$.
\end{algorithm}

\begin{remark}
 Newest-vertex-bisection is a method to bisect $d$-dimensional simplices such that shape regularity is conserved. In order to prove optimality of the algorithms (see below), the mesh refinement has to satisfy certain properties. Newest-vertex-bisection, and a variant of it that allows hanging nodes, are two of only a few mesh refinement strategies that are known to be optimal. Details can be found in~\cite{stevenson08,bn}.  
\end{remark}

We say that Algorithm~\ref{alg:adaptive} is rate-optimal (or just optimal), if for every possible convergence rate $s>0$ that satisfies
\begin{subequations}\label{eq:opt}
\begin{align}
{C_{\rm best}:=}\sup_{N\in\N\cup\{0\}}\inf_{\TT\in\T\atop \#\TT-\#\TT_0\leq N} \eta(\TT) (N+1)^s 
<\infty
\end{align}
the output of Algorithm~\ref{alg:adaptive} also satisfies
\begin{align}
\sup_{\ell\in\N\cup\{0\}} \eta(\TT_\ell) (\#\TT_\ell)^s 
{\leq C_{\rm opt}C_{\rm best}}<\infty
\end{align}
\end{subequations}
{for some optimality constant $C_{\rm opt}>0$.}
Note that some authors use a different notion of rate-optimality, e.g.~\cite{ckns,stevenson07}, where they replace the error estimator by a weighted sum of error and data oscillations. We follow the definition in~\cite{axioms} as it is slightly more general in cases where the error estimator does not provide a lower bound for the error. This is relevant for problems such as, e.g., the FEM-BEM coupling problem in Section~\ref{sec:fembem}.

The first proofs of optimality~\eqref{eq:opt} for the Poisson problem can be found in the breakthrough papers~\cite{stevenson07,ckns, bdd}. They all use (at least implicitly) a form of the orthogonality~\eqref{eq:orth}. In the following, we will show that a generalization of~\eqref{eq:orth} holds a priori in the present setting.

\subsection{Proof of optimal convergence}\label{sec:proof}
We follow the proof in~\cite{axioms} which states four requirements (A1)--(A4) that are sufficient in order to show optimality of a given adaptive algorithm. The main technical innovation in the present work is to show that (A3) follows essentially from the well-posedness of the problem. We collect all the remaining assumptions below and keep the numbering for consistency with~\cite{axioms}:
 There exist constants $C_{\rm red}$, $C_{\rm stab}$, $C_{\rm dlr}$,
$C_{\rm ref}\geq 1$,
and $0\leq q_{\rm red}<1$
such that all refinements $\widehat\TT\in\T$ of a triangulation $\TT\in\T$ satisfy (A1), (A2), and (A4):

\begin{enumerate}

\item[(A1)] {Stability on non-refined elements}: All subsets $\mathcal{S}\subseteq\TT\cap\widehat\TT$ of non-refined elements satisfy
\begin{align*}
\Big|\Big(\sum_{T\in\mathcal{S}}\eta_T(\widehat\TT)^2\Big)^{1/2}-\Big(\sum_{T\in\mathcal{S}}\eta_T(\TT)^2\Big)^{1/2}\Big|\leq C_{\rm stab}\,\norm{u_\TT- u_{\widehat\TT}}{\XX}.
\end{align*}
\item[(A2)] {Reduction on refined elements}: There holds
\begin{align*}
\sum_{T\in\widehat\TT\setminus\TT}\eta_T(\widehat\TT)^2\leq q_{\rm red} \sum_{T\in\TT\setminus\widehat\TT}\eta_T(\TT)^2 + C_{\rm red}\norm{u_\TT- u_{\widehat\TT}}{\XX}^2.
\end{align*}

\item[(A4)]{Discrete reliability}:
There exists a subset $\RR(\TT,\widehat\TT)\subseteq\TT$ with $\TT\backslash\widehat\TT \subseteq\RR(\TT,\widehat\TT)$ and
$|\RR(\TT,\widehat\TT)|\le C_{\rm ref}|\TT\backslash\widehat\TT|$ such that
\begin{align*}
 \norm{u_{\widehat\TT}-u_\TT}{\XX}^2
 \leq C_{\rm dlr}^2\sum_{T\in\RR(\TT,\widehat\TT)}\eta_T(\TT)^2.
\end{align*}
\end{enumerate}

Note that we removed the original general quasi-orthogonality (A3) from~\cite{axioms}. In the optimality proof below, we will replace it with a relaxed version (see~\eqref{eq:qoN} below), which turns out to be true in the present setting.
Hence, to prove the main result in Theorem~\ref{thm:opt} below, 
we first establish linear convergence of the estimator in Section~\ref{sec:linconv} below. Then, we use the remaining assumptions (A1), (A2), and (A4) to complete the optimality proof laid out in~\cite{axioms} (but inspired by~\cite{stevenson07,ckns}).

It is a standard assumption in the literature of rate-optimality that the marking parameter $\theta$ must be chosen sufficiently small. In~\cite[Remark~4.16]{axioms}, this is quantified with $0<\theta<\theta_\star:=(1+C_{\rm stab}^2C_{\rm dlr}^2)^{-1}\leq 1/2$. With this, we state the main result of this work.
\begin{theorem}\label{thm:opt}
 For a continuous~\eqref{eq:cont} and uniform inf-sup stable~\eqref{eq:uniforminfsup} problem of the form~\eqref{eq:weak}, the assumptions (A1), (A2), and (A4) imply  rate-optimality~\eqref{eq:opt} of Algorithm~\ref{alg:adaptive} {for all $0<\theta<\theta_\star$}. {The optimality constant $C_{\rm opt}$ depends only on the constants in (A1), (A2), and (A4) as well as on $C_a$, $\gamma$, $d$, and $\TT_0$.}
\end{theorem}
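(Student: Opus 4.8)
The plan is to produce the relaxed quasi-orthogonality \eqref{eq:qoN} --- the substitute for the axiom (A3) of \cite{axioms} --- \emph{a priori} from the uniform inf-sup stability \eqref{eq:uniforminfsup}, and then to feed it, together with the retained axioms (A1), (A2), (A4), into the abstract optimality engine of \cite{axioms}. The first part proceeds in three moves. Fix a Riesz basis $B$ of the closure of $\bigcup_\ell\XX_{\TT_\ell}$ that is compatible with the nested hierarchy $\XX_{\TT_0}\subseteq\XX_{\TT_1}\subseteq\cdots$ generated by the algorithm, together with a matching Riesz basis of the test spaces, and rewrite \eqref{eq:weak} as an infinite linear system $Mx=F$ with $M_{vw}=a(w,v)$ in such a way that the leading principal submatrices of $M$ at the dimensions $\dim\XX_{\TT_\ell}$ are exactly the Galerkin matrices on $\XX_{\TT_\ell}\times\YY_{\TT_\ell}$; crucially, no wavelet-type off-diagonal decay of $M$ is needed here, in sharp contrast to \cite{stokesopt}. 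By the continuity \eqref{eq:cont} and the \emph{discrete} inf-sup bound in \eqref{eq:uniforminfsup}, these principal submatrices are uniformly invertible, i.e.\ $M$ is \emph{regular}.

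Next, I would invoke the new growth estimate for the pivot-free $LU$-factorization $M=LU$ to be established in Section~\ref{sec:growth}: for a regular $M$ the factors satisfy $\|L_n\|,\|U_n\|,\|L_n^{-1}\|,\|U_n^{-1}\|=\mathcal O(n^{1/2-\delta})$ for some $\delta>0$, where the subscript $n$ denotes the $n\times n$ leading section. Then one reruns the $LU$-to-quasi-orthogonality mechanism of \cite{stokesopt}, but tracks the $\mathcal O(n^{1/2-\delta})$ growth through the estimates; this yields \eqref{eq:qoN}, a version of general quasi-orthogonality in which the orthogonality constant is allowed to grow sublinearly (at most like $N^{1-2\delta}=o(N)$) in the number $N$ of refinement levels. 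Section~\ref{sec:qoN} then verifies that this weaker statement still suffices: since \eqref{eq:qoN} controls the telescoping sums $\sum_k\norm{u_{k+1}-u_k}{\XX}^2$ with a loss that is sublinear in the number of terms, it combines with (A1), (A2) and the \Doerfler{} property \eqref{eq:doerfler} --- for $\theta<\theta_\star$ --- to give linear convergence $\eta(\TT_{\ell+j})\le C q^{j}\eta(\TT_\ell)$, $0<q<1$, of a suitable quasi-error (Section~\ref{sec:linconv}).

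With linear convergence in hand, the passage to \eqref{eq:opt} follows the now classical route of \cite{axioms} (originating in \cite{stevenson07,ckns}) essentially verbatim: discrete reliability (A4) bounds the estimator contributions on the refined region by $\norm{u_\TT-u_{\widehat\TT}}{\XX}$, the mesh-closure and overlay estimates for newest-vertex-bisection (\cite{stevenson08}) together with the minimal cardinality of $\MM_\ell$ in \eqref{eq:doerfler} allow a comparison of $\TT_\ell$ with a quasi-optimal mesh for the rate $s$, and summing the resulting geometric series gives \eqref{eq:opt} with $C_{\rm opt}$ depending only on the constants in (A1), (A2), (A4), on $C_a$, $\gamma$, $d$, and $\TT_0$.

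The main obstacle is unquestionably Section~\ref{sec:growth}: unpivoted Gaussian elimination can be arbitrarily ill-behaved in the worst case, so proving that the $LU$-factors of a regular matrix grow no faster than $n^{1/2-\delta}$ --- rather than, say, admitting only a trivial linear bound --- is a genuinely new input, and it is what makes the whole chain run. By comparison, the $LU$-to-\eqref{eq:qoN} step is a quantitative reprise of \cite{stokesopt}, and the step from linear convergence to optimality is an adaptation of well-established arguments.
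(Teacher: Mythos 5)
Your high-level roadmap agrees with the paper's: construct a basis adapted to the nested sequence $\XX_\ell\subseteq\XX_{\ell+1}\subseteq\cdots$, translate \eqref{eq:weak} into a matrix problem $Mx=F$ whose principal submatrices at the dimensions $\dim\XX_{\TT_\ell}$ are the Galerkin matrices, use continuity \eqref{eq:cont} and uniform discrete inf-sup \eqref{eq:uniforminfsup} to show these submatrices are uniformly bounded and uniformly invertible, derive \eqref{eq:qoN} with $C(N)=o(N)$ from a new growth estimate on the $LU$-factors, feed this into the linear-convergence argument of Section~\ref{sec:linconv}, and then conclude through the standard machinery of \cite{axioms}.

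There is, however, a genuine gap at the pivotal step, and it is exactly the issue the paper confronts in Section~\ref{sec:extension}. You state the growth estimate in the form $\|L_n\|,\|U_n\|,\|L_n^{-1}\|,\|U_n^{-1}\|=\mathcal O(n^{1/2-\delta})$ \emph{in the matrix dimension} $n$ (this is Theorem~\ref{thm:LUbound}), and then immediately claim that this yields $C(N)\lesssim N^{1-2\delta}=o(N)$ \emph{in the number $N$ of refinement levels}. But the relevant dimension here is $n=\dim(\XX_{\ell+N+1})$, and in any concrete application $\dim(\XX_{\ell+N+1})\gg N$ — each refinement step adds many degrees of freedom, not one. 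So the bound $\mathcal O(n^{1/2-\delta})$ with $n=\dim(\XX_{\ell+N+1})$ is far too weak to deliver $C(N)=o(N)$; it would only give something like $C(N)\lesssim\dim(\XX_{\ell+N+1})^{1-2\delta}$, which is useless in Lemma~\ref{lem:Nsum0} and Remark~\ref{rem:cond}. This is precisely why the paper does not stop at Theorem~\ref{thm:LUbound}. Instead it introduces a \emph{block} $LU$-factorization with one block per added refinement level (Definition~\ref{def:basis}), designs custom block-Schatten norms $\bsnorm{\cdot}{p}$ together with the block truncation operator $\UU_{\rm b}$ (Lemmas~\ref{lem:bsnorm}--\ref{lem:blocktruncinfty}), and proves Theorem~\ref{thm:blockLUbound}, where the bound $\mathcal O(m^{1/p})$ is controlled by the \emph{number of blocks} $m\approx N$, not by the matrix size $n$. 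Only that version, combined with Lemma~\ref{lem:blockUstab} (uniform bound on the block columns $U^{-1}(:,j)$) and Lemma~\ref{lem:luqoN}, yields $C(N)\lesssim N^{1-2\delta}$. Without the block structure and the corresponding norms, the chain of implications you sketch does not close.

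A secondary, smaller point: the paper works with a finite, level-by-level orthonormal basis (Definition~\ref{def:basis}) depending on $\ell$ and $N$, rather than an infinite Riesz basis. A Riesz basis would still give $\norm{u_{\ell+k+1}-u_{\ell+k}}{\XX}\simeq\norm{\lambda(k+1)-\lambda(k)}{\ell_2}$ with extra constants, but once you recognize that the basis only enters through the principal-submatrix structure, the essential point remains the block decomposition, not the global basis.

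Once the block version of the growth estimate is in hand, the remainder of your argument — linear convergence via Lemmas~\ref{lem:Nsum0}--\ref{lem:Nsum} for $\theta<\theta_\star$, followed by the optimality machinery of \cite{axioms} — coincides with the paper's Section~\ref{sec:proof2}.
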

We postpone the proof of this theorem to Section~\ref{sec:proof2}.

\section{General quasi-orthogonality and rate-optimal convergence}\label{sec:qoN}
General quasi-orthogonality is a property of the adaptive sequence only. Therefore, we  adopt the notation 
\begin{align*}
 \eta_\ell:=\eta(\TT_\ell),\quad\XX_\ell:=\XX_{\TT_\ell},\quad\YY_\ell:=\YY_{\TT_\ell},\quad u_\ell:=u_{\TT_\ell}.
\end{align*}
The only properties we are going to use are reliability in the sense $\norm{u-u_\ell}{\XX}\leq C_{\rm rel}\eta_\ell$ for all $\ell\in\N$, quasi-monotonicity in the sense $\eta_{\ell+k}^2\leq C_{\rm mon}\eta_\ell^2$ for all $\ell,k\in\N$ (both follow from (A1), (A2), and~(A4), see Section~\ref{sec:proof2} below), and nestedness in the sense $\XX_{\ell+1}\supseteq \XX_\ell$ as well as $\YY_{\ell+1}\supseteq \YY_\ell$ for all $\ell\in\N$.

\subsection{Relaxed quasi-orthogonality}
For the problems we have in mind (see Sections~\ref{sec:stokes}--\ref{sec:time}), general quasi-orthogonality is the key estimate in order to apply the abstract optimality proof of~\cite{axioms}.
The main task is to establish a relaxed version of this quasi-orthogonality and use it to prove linear convergence of the estimator.

This relaxed quasi-orthogonality reads as follows: There exists a function $C\colon \N\to \R$ such that for all $\ell,N\in\N$ holds
\begin{align}\label{eq:qoN}
 \sum_{k=\ell}^{\ell+N} \norm{u_{k+1}-u_k}{\XX}^2\leq C(N)\norm{u-u_\ell}{\XX}^2.
\end{align}
In Section~\ref{sec:linconv} below we show that $C(N)=o(N)$ {as $N\to\infty$} is (under the additional assumptions (A1), (A2), and (A4)) sufficient for linear convergence of the error estimator. In Sections~\ref{sec:lu}--\ref{sec:growth} below, we show that uniform inf-sup stability of the problem implies~\eqref{eq:qoN} with $C(N)=\mathcal{O}(N^{1-\delta})$ for some $\delta>0$. Hence, we show that the assumption (A3) \emph{general quasi-orthogonality} from~\cite{axioms} is redundant for inf-sup stable problems.

\begin{remark}
Note that in the case where $a(\cdot,\cdot)$ is a scalar product on $\XX$, the quasi-orthogonality~\eqref{eq:qoN} follows immediately from~\eqref{eq:orth} via a telescoping sum argument (even with $C(N)\simeq 1$). 

The proofs in the remainder of this work would allow us to use an even more general version of~\eqref{eq:qoN}, i.e.,
\begin{align}\label{eq:qoN2}
 \sum_{k=\ell}^{\ell+N} \norm{u_{k+1}-u_k}{\XX}^2-\eps\eta_k^2\leq C(N)\eta_\ell^2
\end{align}
as long as $\eps>0$ is sufficiently small (see also~\cite[Assumption~(A3)]{axioms}, where the above estimate is proposed with $C(N)\simeq 1$). This might even further reduce the assumptions on the bilinear form $a(\cdot,\cdot)$. However, we do not require this generalization for the applications in Sections~\ref{sec:stokes}--\ref{sec:time} and hence proceed with~\eqref{eq:qoN} for the sake of a clearer presentation.
\end{remark}
\subsection{Linear convergence}\label{sec:linconv} 
We show that general quasi-orthogonality~\eqref{eq:qoN} implies linear convergence under some general assumptions on the estimator sequence. This is a key step in the optimality proof below. While the proof of a similar result in~\cite[Proposition~4.10]{axioms} assumes $C(N)\simeq 1$, the present case with $C(N)\to\infty$ as $N\to\infty$ requires stronger arguments. 
\begin{lemma}\label{lem:Nsum0}
 Let $(\eta_\ell)_{\ell\in\N}$ satisfy \emph{estimator reduction}
 \begin{align}\label{eq:estred}
\eta_{\ell+1}^2\leq \kappa \eta_\ell^2 + C_{\rm est}\norm{u_{\ell+1}-u_\ell}{\XX}^2  
 \end{align}
for some $0<\kappa<1$, $C_{\rm est}>0$ as well as \emph{reliability}
\begin{align}\label{eq:rel}
 \norm{u-u_\ell}{\XX}\leq C_{\rm rel}\eta_\ell
\end{align}
 for all $\ell\in\N$.
Under general quasi-orthogonality~\eqref{eq:qoN}, there holds for all $\ell,N\in\N$ that
\begin{align}\label{eq:Nsum}
  \sum_{k=\ell}^{\ell+N} \eta_k^2 \leq D(N) \eta_\ell^2
 \end{align}
 with $D(N)=1+\frac{\kappa + C_{\rm est} C(N-1)C_{\rm rel}^2}{1-\kappa}$ and $C(N)$ from~\eqref{eq:qoN}.
\end{lemma}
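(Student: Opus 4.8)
The plan is to combine the three hypotheses --- estimator reduction~\eqref{eq:estred}, reliability~\eqref{eq:rel}, and relaxed quasi-orthogonality~\eqref{eq:qoN} --- into a single geometric-series estimate. First I would iterate the estimator reduction~\eqref{eq:estred}: for any $k\geq\ell$ one writes $\eta_k^2$ in terms of $\eta_\ell^2$ and a weighted sum of the increments $\norm{u_{j+1}-u_j}{\XX}^2$, namely
\begin{align*}
 \eta_k^2\leq \kappa^{k-\ell}\eta_\ell^2 + C_{\rm est}\sum_{j=\ell}^{k-1}\kappa^{k-1-j}\norm{u_{j+1}-u_j}{\XX}^2.
\end{align*}
Summing this over $k=\ell,\ldots,\ell+N$ and interchanging the order of summation, the first term contributes at most $\frac{1}{1-\kappa}\eta_\ell^2$, while in the double sum each increment $\norm{u_{j+1}-u_j}{\XX}^2$ (for $j$ ranging up to $\ell+N-1$) picks up a geometric factor $\sum_{k>j}\kappa^{k-1-j}\leq \frac{1}{1-\kappa}$. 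Hence
\begin{align*}
 \sum_{k=\ell}^{\ell+N}\eta_k^2\leq \frac{1}{1-\kappa}\eta_\ell^2 + \frac{C_{\rm est}}{1-\kappa}\sum_{j=\ell}^{\ell+N-1}\norm{u_{j+1}-u_j}{\XX}^2.
\end{align*}

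Next I would bound the increment sum. By relaxed quasi-orthogonality~\eqref{eq:qoN} applied with $N$ replaced by $N-1$ (so that the summation index $j$ runs from $\ell$ to $\ell+N-1$), we get $\sum_{j=\ell}^{\ell+N-1}\norm{u_{j+1}-u_j}{\XX}^2\leq C(N-1)\norm{u-u_\ell}{\XX}^2$, and then reliability~\eqref{eq:rel} gives $\norm{u-u_\ell}{\XX}^2\leq C_{\rm rel}^2\eta_\ell^2$. Substituting both into the previous display yields
\begin{align*}
 \sum_{k=\ell}^{\ell+N}\eta_k^2\leq \Big(\frac{1}{1-\kappa}+\frac{C_{\rm est}\,C(N-1)\,C_{\rm rel}^2}{1-\kappa}\Big)\eta_\ell^2,
\end{align*}
and a quick rearrangement of the constant, writing $\frac{1}{1-\kappa}=1+\frac{\kappa}{1-\kappa}$, produces exactly $D(N)=1+\frac{\kappa+C_{\rm est}C(N-1)C_{\rm rel}^2}{1-\kappa}$ as claimed. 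One should double-check the index bookkeeping at $k=\ell$, where the inner sum is empty and the bound is just $\eta_\ell^2\leq D(N)\eta_\ell^2$, which is consistent since $D(N)\geq 1$.

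I expect the only delicate point to be the Fubini/index-shift step: making sure the geometric weights are summed over the correct range so that each increment truly gains only a bounded factor $(1-\kappa)^{-1}$, and that the quasi-orthogonality is invoked with the right value of its argument ($N-1$ rather than $N$), matching the definition of $D(N)$. Everything else is a routine geometric-series manipulation, and no use of (A1), (A2), (A4) beyond what is already encoded in~\eqref{eq:estred} and~\eqref{eq:rel} is needed for this particular lemma.
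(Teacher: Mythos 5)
Your proof is correct and reaches exactly the same constant $D(N)$; the difference from the paper is mechanical rather than conceptual. The paper applies estimator reduction~\eqref{eq:estred} only once per term in $\sum_{k=\ell+1}^{\ell+N}\eta_k^2$, obtaining $\kappa\sum_{k=\ell+1}^{\ell+N}\eta_{k-1}^2 + C_{\rm est}\sum_{k=\ell+1}^{\ell+N}\norm{u_k-u_{k-1}}{\XX}^2$, and then exploits a kickback: the index-shifted sum overlaps the original one except for the boundary term $\eta_\ell^2$, so subtracting $\kappa\sum_{k=\ell+1}^{\ell+N}\eta_k^2$ from both sides produces the factor $(1-\kappa)$ in one step. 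You instead unroll the recursion to $\eta_k^2\leq\kappa^{k-\ell}\eta_\ell^2+C_{\rm est}\sum_{j=\ell}^{k-1}\kappa^{k-1-j}\norm{u_{j+1}-u_j}{\XX}^2$, sum in $k$, and use Fubini together with a geometric-series bound to recover the same $(1-\kappa)^{-1}$ weights. Both routes invoke quasi-orthogonality~\eqref{eq:qoN} with argument $N-1$ and then reliability~\eqref{eq:rel} in the same spot, and the final algebra $\frac{1}{1-\kappa}+\frac{C_{\rm est}C(N-1)C_{\rm rel}^2}{1-\kappa}=1+\frac{\kappa+C_{\rm est}C(N-1)C_{\rm rel}^2}{1-\kappa}$ matches $D(N)$ exactly. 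The paper's one-step kickback is slightly shorter to write; your fully unrolled version makes the distribution of the $\kappa$-weights explicit, which arguably explains more transparently why only a bounded $(1-\kappa)^{-1}$ factor survives.
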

\begin{proof}
 Estimator reduction and general quasi-orthogonality~\eqref{eq:qoN} imply
 \begin{align*}
  \sum_{k=\ell+1}^{\ell+N} \eta_k^2&\leq \kappa \sum_{k=\ell+1}^{\ell+N}\eta_{k-1}^2 + C_{\rm est}\sum_{k=\ell+1}^{\ell+N}\norm{u_k-u_{k-1}}{\XX}^2\\
  &\leq 
  \kappa \sum_{k=\ell+1}^{\ell+N}\eta_{k-1}^2 + C_{\rm est}C(N-1)\norm{u-u_{\ell}}{\XX}^2.
 \end{align*}
Reliability~\eqref{eq:rel} of $\eta_\ell$ shows
\begin{align*}
 (1-\kappa)\sum_{k=\ell+1}^{\ell+N} \eta_k^2\leq (\kappa + C_{\rm est} C(N-1)C_{\rm rel}^2)\eta_{\ell}^2
\end{align*}
and hence~\eqref{eq:Nsum}. 
This concludes the proof.
\end{proof}

\begin{lemma}\label{lem:Nsum}
 Suppose the sequence $(\eta_\ell)_{\ell\in\N}\subset\R$ satisfies~\eqref{eq:Nsum}  with a bound $D(N)>1$, $N\in\N$ such that there exists $N_0\in\N$ with
\begin{align}\label{eq:Dcond}
q_{\log}:=\log(D(N_0)) - \sum_{j=1}^{N_0} D(j)^{-1}<0.
\end{align}
If $(\eta_\ell)_{\ell\in\N}$ is additionally quasi-monotone in the sense that there exists $C_{\rm mon}>0$ such that 
\begin{align}\label{eq:qmon}
\eta_{\ell+k}^2\leq C_{\rm mon}\eta_\ell^2\quad\text{for all }\ell,k\in\N,
\end{align}
then there holds with $q:=\exp(q_{\log}/N_0)<1$ and $C:=C_{\rm mon}\exp(-q_{\log})>0$ that
\begin{align*}
 \eta_{\ell+k}^2 \leq Cq^k \eta_\ell^2\quad\text{for all }k,\ell\in\N.
\end{align*}
\end{lemma}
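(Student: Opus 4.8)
I want to extract geometric decay from the "summability" estimate~\eqref{eq:Nsum} combined with quasi-monotonicity~\eqref{eq:qmon}. The key observation is that~\eqref{eq:Nsum} says the partial sums $S_\ell(N):=\sum_{k=\ell}^{\ell+N}\eta_k^2$ are controlled by their first term, $S_\ell(N)\le D(N)\eta_\ell^2$. The first step is to turn this into a recursion for the tail sums. Fix $N_0$ as in~\eqref{eq:Dcond}. For the tail $\sigma_\ell:=\sum_{k\ge\ell}\eta_k^2$ (a priori possibly infinite, but I will only manipulate finite truncations and pass to the limit, or argue directly on finite windows), I want an inequality of the form $\eta_\ell^2 \gtrsim \frac{1}{D(N_0)}\sum_{k=\ell}^{\ell+N_0}\eta_k^2$, i.e. each term dominates a fixed fraction of the block of the next $N_0$ terms.

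**From block domination to geometric decay.**
The second step is a telescoping/averaging argument in the spirit of the discrete Gronwall or "summability implies decay" lemmas (cf.~\cite[Lemma~4.9]{axioms}). From $\eta_\ell^2\ge D(N_0)^{-1}\sum_{k=\ell}^{\ell+N_0}\eta_k^2$ one does not directly get a contraction over a single step, so I instead iterate over one block of length $N_0$: I would like to show $\sum_{k=\ell+1}^{\ell+N_0}\eta_k^2 \le q_0 \sum_{k=\ell}^{\ell+N_0-1}\eta_k^2$ for some explicit $q_0<1$. To get the sharp constant $q=\exp(q_{\log}/N_0)$, the natural device is to take logarithms: write $b_k:=\eta_k^2$ and use the estimate $b_\ell \ge D(j)^{-1}\sum_{k=\ell}^{\ell+j}b_k$ for each $j=1,\dots,N_0$ to compare $\log(\sum_{k=\ell}^{\ell+N_0}b_k)$ with $\sum_{k=\ell}^{\ell+N_0}\log$-type quantities; the combinatorial identity that makes the constant $\sum_{j=1}^{N_0}D(j)^{-1}$ appear is that summing $\log b_k$ over a window and re-indexing produces exactly these reciprocals as weights. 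Concretely, multiplying the bounds $b_\ell\ge D(j)^{-1}\sum_{k=\ell}^{\ell+j}b_k$ and using AM–GM on the right-hand sides should yield, after taking $\log$, an inequality whose exponentiated form is $b_{\ell+1}\cdots b_{\ell+N_0}\le e^{q_{\log}}\,b_\ell\cdots b_{\ell+N_0-1}$ — i.e. a contraction of the geometric-mean of a length-$N_0$ block by the factor $e^{q_{\log}}<1$. Since $q_{\log}<0$, this gives geometric decay of block geometric means with ratio $\exp(q_{\log}/N_0)=q$ per step.

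**Closing the argument.**
Once I have $\prod_{k=\ell+1}^{\ell+N_0}b_k\le e^{q_{\log}}\prod_{k=\ell}^{\ell+N_0-1}b_k$, iterating $m$ times gives $\prod_{k=\ell+mN_0+1}^{\ell+mN_0+N_0}b_k\le e^{mq_{\log}}\prod_{k=\ell}^{\ell+N_0-1}b_k$ (or an analogous statement for sliding windows). Using quasi-monotonicity~\eqref{eq:qmon}, each factor $b_k$ in a window is comparable to any other up to $C_{\rm mon}$, so the product bound converts into a pointwise bound: $b_{\ell+k}^{N_0}\le C_{\rm mon}^{N_0}\,(\text{geometric-mean bound})$, and unwinding the constants yields $\eta_{\ell+k}^2\le C q^k\eta_\ell^2$ with $q=\exp(q_{\log}/N_0)$ and $C=C_{\rm mon}\exp(-q_{\log})$ (the $\exp(-q_{\log})$ absorbing the "off-by-a-partial-block" slack). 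Finally I should handle small $k$ (i.e. $k<N_0$) separately, where $\eta_{\ell+k}^2\le C_{\rm mon}\eta_\ell^2\le C q^k\eta_\ell^2$ holds trivially since $Cq^k\ge C_{\rm mon}q^{N_0-1}\cdot q^{1-N_0}\ge C_{\rm mon}$ is arranged by the choice of $C$.

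**Main obstacle.**
The delicate point is getting the \emph{sharp} constant $q_{\log}=\log D(N_0)-\sum_{j=1}^{N_0}D(j)^{-1}$ rather than some cruder bound. This forces the logarithmic/AM–GM route above and a careful bookkeeping of which window each application of~\eqref{eq:Nsum} controls; a naive "$\eta_\ell^2$ dominates a fixed fraction of the next block" argument only gives a non-sharp ratio and would not reproduce the stated $q$ and $C$. I expect the rest — the iteration and the use of quasi-monotonicity to pass from products to pointwise bounds — to be routine bookkeeping.
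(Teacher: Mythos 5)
There is a genuine gap, and it is at the heart of the argument. The only nontrivial thing to prove is the single-window contraction $\eta_{\ell+N_0}^2 \leq e^{q_{\log}}\eta_\ell^2$; everything after that (extending to general $k$ via quasi-monotonicity, handling remainders) is routine and matches the paper. Your candidate intermediate result $b_{\ell+1}\cdots b_{\ell+N_0}\le e^{q_{\log}}\,b_\ell\cdots b_{\ell+N_0-1}$ provides no leverage at all: the common factors $b_{\ell+1}\cdots b_{\ell+N_0-1}$ cancel, so this inequality is \emph{literally identical} to $b_{\ell+N_0}\le e^{q_{\log}}b_\ell$, i.e.\ to the statement you are trying to prove. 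Recasting the target as a ``geometric-mean contraction'' does not decompose it into something easier.

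Worse, the proposed route to it cannot produce the stated constant. Multiplying the bounds $D(j)b_\ell\ge\sum_{k=\ell}^{\ell+j}b_k$ over $j=1,\dots,N_0$ and applying AM--GM to the right-hand sides leads, after taking logarithms, to expressions involving $\sum_j\log D(j)$ and $\sum_j\log(j{+}1)$ --- not $\sum_j D(j)^{-1}$. The specific form of $q_{\log}$ comes from a different mechanism: the paper first proves by induction the inequality
\begin{align*}
\eta_{\ell+k}^2\leq\Big(\prod_{j=1}^k(1-D(j)^{-1})\Big)\sum_{j=\ell}^{\ell+k}\eta_j^2,
\end{align*}
then applies~\eqref{eq:Nsum} once more to replace the outer sum by $D(k)\eta_\ell^2$, and finally uses the elementary bound $\log(1-x)\le-x$ on each factor $(1-D(j)^{-1})$, which is exactly where the $-\sum_{j=1}^{N_0}D(j)^{-1}$ comes from (while $\log D(N_0)$ comes from the single outer factor $D(N_0)$). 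This inductive step --- peeling off the leading term $\eta_\ell^2$ of the window, recognizing $\eta_\ell^2\ge D(k{+}1)^{-1}\sum_{j=\ell}^{\ell+k+1}\eta_j^2$, and absorbing it into the product --- is the idea your sketch is missing; no AM--GM manipulation that I can see will reproduce it. Finally, a small correction to your closing paragraph: quasi-monotonicity~\eqref{eq:qmon} is one-sided ($\eta_{\ell+k}^2\lesssim\eta_\ell^2$), so terms within a window are not ``comparable to each other''; in the actual proof it is used only to bound $\eta_{\ell+b}^2\le C_{\rm mon}\eta_\ell^2$ for the remainder $b<N_0$, which fortunately is all that is needed.
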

\begin{remark}\label{rem:cond} 
 Note that $C(N)\leq CN^{1-\delta}$ for all $N\in\N$ and some $\delta>0$ implies $D(N)\leq \widetilde C N^{1-\delta}$ and is sufficient in order to satisfy the assumption in~\eqref{eq:Dcond}. Indeed, there holds
 \begin{align*}
  \log(D(N)) - \sum_{j=1}^{N} D(j)^{-1}\leq \log(\widetilde C) + (1-\delta)\log(N) - \widetilde C^{-1}\sum_{j=1}^N j^{-1+\delta}\to -\infty
 \end{align*}
as $N\to\infty$. Even the borderline case $D(N)\lesssim N$ works as long as $\lim \sup_{N\to\infty} D(N)/N$ is sufficiently small.
\end{remark}
\begin{proof}[Proof of Lemma~\ref{lem:Nsum}]
We prove by mathematical induction on $k$ that
\begin{align}\label{eq:indhyp}
   \eta_{\ell+k}^2 \leq \Big(\prod_{j=1}^k(1-D(j)^{-1})\Big)\sum_{j=\ell}^{\ell+k}\eta_{j}^2
 \end{align}
 for all $k,\ell\in\N$. To that end, note that~\eqref{eq:indhyp} is true for all $\ell\in\N$ and $k=0$ (we interpret the empty product as 1). For the induction step, assume that~\eqref{eq:indhyp} is true for all $\ell\in\N$ and some fixed $k\in\N$.
 Then, we apply~\eqref{eq:indhyp} for $\ell+1$ and show with~\eqref{eq:Nsum}
 \begin{align*}
  \eta_{\ell+k+1}^2 &\leq \Big(\prod_{j=1}^k(1-D(j)^{-1})\Big)\sum_{j=\ell+1}^{\ell+k+1}\eta_{j}^2\\
   &=\Big(
   \prod_{j=1}^k(1-D(j)^{-1})\Big)\Big(\sum_{j=\ell}^{\ell+k+1}\eta_{j}^2-\eta_{\ell}^2\Big)\\
    &\leq \Big(\prod_{j=1}^k(1-D(j)^{-1})\Big)\Big(\sum_{j=\ell}^{\ell+k+1}\eta_{j}^2-D(k+1)^{-1}\sum_{j=\ell}^{\ell+k+1}\eta_{j}^2\Big)\\
    &\leq \Big( \prod_{j=1}^{k+1}(1-D(j)^{-1})\Big)\sum_{j=\ell}^{\ell+k+1}\eta_{j}^2.
\end{align*}
This concludes the induction and proves~\eqref{eq:indhyp}. 
A final application of~\eqref{eq:Nsum} to~\eqref{eq:indhyp} shows
\begin{align*}
 \eta_{\ell+k}^2\leq \Big(D(k)\prod_{j=1}^{k}(1-D(j)^{-1})\Big)\eta_{\ell}^2\quad\text{for all }k,\ell\in\N.
\end{align*}
It remains to calculate the constants $C$ and $q$. To this end, we observe
\begin{align*}
 \log\Big(D(k)\prod_{j=1}^{k}(1-D(j)^{-1})\Big) &= \log(D(k)) + \sum_{j=1}^k \log(1-D(j)^{-1})\\
 &\leq 
 \log(D(k)) - \sum_{j=1}^k D(j)^{-1}.
\end{align*}
Under the assumption~\eqref{eq:Dcond} on $D(\cdot)$, we may define $q_0:=\exp(q_{\log})$ to obtain $0<q_0<1$ and $\eta_{\ell+N_0}^2\leq q_0 \eta_\ell^2$ for all $\ell\in\N$. To extend this estimate to general $k\in\N$, we note that we always find $a,b\in\N$ with $b<N_0$ such that $k=aN_0+b$. Quasi-monotonicity of $\eta_\ell$ then implies 
\begin{align*}
 \eta_{\ell+k}^2 = \eta_{\ell+aN_0+b}^2\leq q_0^a\eta_{\ell+b}^2\leq C_{\rm mon}q_0^a \eta_\ell^2\leq \frac{1}{q_0}C_{\rm mon}q_0^{k/N_0}\eta_\ell^2,
\end{align*}
where we used $a\geq k/{N_0}-1$. This concludes the proof.
\end{proof}

\section{Bounded $LU$-factorization implies general quasi-orthogonality}\label{sec:lu}
The key requirement for the optimality proof of Theorem~\ref{thm:opt} in Section~\ref{sec:proof2} below is $C(N)\lesssim N^{1-\delta}$. In this section, we show that $C(N)$ is closely related to the boundedness of the $LU$-factorization of a matrix version of~\eqref{eq:weak}. While this connection has already been established in~\cite[Section~3.1]{stokesopt}, the key aspect here is that the new quasi-orthogonality~\eqref{eq:qoN} allows us to translate~\eqref{eq:weak} into  the matrix setting by use of a simple orthogonal basis instead of the highly involved wavelet-type construction in~\cite{stokesopt}.

We consider block-matrices based on a block-structure $n_0=0<n_1<\ldots<n_N\in\N$. A block matrix $M\in\R^{n_N\times n_N}$ is organized into blocks $M(i,j)\in \R^{(n_{i+1}-n_i)\times (n_{j+1}-n_j)}$ and thus can be written as
\begin{align}\label{eq:blockdef}
 M=\begin{pmatrix} M(0,0) &M(0,1) &\ldots &M(0,N-1)\\
    M(1,0) &M(1,1) &\ldots & &\\
    \vdots & & & &\\
    M(N-1,0) & \ldots & &M(N-1,N-1)
   \end{pmatrix}.
\end{align}
By $M(:,j)$ and $M(i,:)$ we denote the $j$-th block column and the $i$-th block row and similarly by $M(0:k, j)$ and $M(i,0:k)$, we refer to the first $k+1$ entries of the $i$-th column and $j$-th row, respectively. 
We use the short hand $M[k]:=M(0:k,0:k)\in \R^{n_{k+1}\times n_{k+1}}$ to denote the restriction of $M$ to the first $k+1$ block-rows and block-columns. We also use the notation for vectors $x\in \R^{n_N}$, i.e., $x[k]=x_{1:n_{k+1}}=(x_1,\ldots,x_{n_{k+1}})\in\R^{n_{k+1}}$.

For matrices $M\in\R^{N\times N}$ and vectors $x\in\R^N$, we use the $\ell_2$-norms
\begin{align*}
 \norm{x}{\ell_2}^2:=\sum_{i=1}^{N}x_i^2\quad\text{and}\quad \norm{M}{2}:=\sup_{x\in\R^N\setminus\{0\}}\frac{\norm{Mx}{\ell_2}}{\norm{x}{\ell_2}}.
\end{align*}
(Note that we start indexing vector/matrix entries with one, but matrix blocks with zero.)
We consider normalized block-$LU$-factorizations $M=LU$, which satisfy
\begin{align*}
 L(i,j)=U(j,i)=0\quad\text{for }j>i\quad\text{and}\quad L(i,i)=I_{n_{i+1}-n_i},
\end{align*}
where $I_k\in\R^{k\times k}$ denotes the identity matrix.
 Note that linear algebra shows {that there exists a unique} block-$LU$-factorization as long as all principal minors $M[j]$, $0\leq j\leq N-1$ are regular (see, e.g.,~\cite[Theorem~13.2]{higham2002}). 

\bigskip 

We aim to reformulate the original problem~\eqref{eq:weak} in a matrix setting by use of a hierarchical basis. To that end, we restrict $\XX$ and $\YY$ to the essential subspaces. Since the goal is to prove~\eqref{eq:qoN}, it suffices to consider the spaces $\XX_\ell,\XX_{\ell+1},\ldots,\XX_{\ell+N+1}$ for given $\ell,N\in\N$.
\begin{definition}\label{def:basis}
Given $\ell,N\in\N$ and the sequence of nested spaces $\XX_{\ell},\XX_{\ell+1},\ldots,\XX_{\ell+N+1}$ from Algorithm~\ref{alg:adaptive}, we may construct a nested orthonormal basis {$\BB^\XX$} (depending on $\ell$ and $N$) by combining an orthonormal basis $\BB_0^\XX$ of $\XX_\ell$ with orthonormal bases $\BB_j^\XX$ of
\begin{align*} 
 \set{v\in \XX_{\ell+j}}{v\perp^\XX \XX_{\ell+j-1}}
\end{align*}
for all $j=1,\ldots,N+1$. This results in $\BB^\XX:=\bigcup_{j=0}^{N+1} \BB_j^\XX$.
Analogously, we may define orthonormal bases $\BB_j^\YY$, $j=0,\ldots,N+1$ and $\BB^\YY$ with respect to $\YY$.
We assume that the basis functions $w_t^\XX\in\BB^\XX$ and $w_r^\YY\in\BB^\YY$ are ordered with respect to $\BB_j^\XX$ and $\BB_j^\YY$, i.e., the functions in $\BB_j^\XX$ come before those in $\BB_{j+1}^\XX$ and so on.

We define a corresponding  block-structure $n_0=0$, $n_{j+1}:=\#\BB_j^\XX$, $j=0,\ldots, N+1$ and a block-matrix $M\in \R^{{\rm dim}(\XX_{\ell+N+1})\times {\rm dim}(\XX_{\ell+N+1})}$ by
\begin{align}\label{eq:Mdef}
	 M_{rt}:=a(w_t^\XX,w_r^\YY)\quad\text{for all }1\leq r,t\leq {\rm dim}(\XX_{\ell+N+1}). 
	\end{align}
\end{definition}	
Stability~\eqref{eq:uniforminfsup} and boundedness~\eqref{eq:cont} of~\eqref{eq:weak} translate directly to the matrix setting. To see that, let ${\rm dim}:={\rm dim}(\XX_{\ell+N+1})$. 
The orthogonality of $\BB^\XX$ and $\BB^\YY$ immediately implies for $x,y\in\R^{\rm dim}$ that $Mx\cdot y =a(\sum_{i=1}^{{\rm dim}} x_i w_i^\XX,\sum_{i=1}^{{\rm dim}} y_i w^\YY_i)\leq C_a\norm{\sum_{i=1}^{{\rm dim}} x_i w^\XX_i}{\XX}\norm{\sum_{i=1}^{{\rm dim}} y_i w^\YY_i}{\YY}=C_a\norm{x}{\ell_2}\norm{y}{\ell_2}$ and hence boundedness
\begin{subequations}\label{eq:Mnorm}
\begin{align}
 \norm{M}{2}\leq C_a.
\end{align}
Moreover,~\eqref{eq:uniforminfsup} implies for $0\leq k\leq N+1$
\begin{align*}
\sup_{y\in\R^{n_{k+1}}\atop \norm{y}{\ell_2}=1}M[k]x\cdot y = \sup_{v\in\YY_{\ell+k}\atop \norm{v}{\YY}=1}a(\sum_{i=1}^{n_{k+1}} x_i w_i^\XX,v)\geq \gamma\norm{\sum_{i=1}^{n_{k+1}} x_i w_i^\XX}{\XX} =\gamma\norm{x}{\ell_2}
\end{align*}
and hence stability
\begin{align}
 \norm{M[k]^{-1}}{2}\leq 1/\gamma\quad\text{for all }0\leq k\leq N+1.
\end{align}
\end{subequations}
Note that $M$ depends on $\ell,N\in\N$, which we omit in favor of a leaner presentation, but the norms of $M$ and $M[k]^{-1}$ are bounded independently of $\ell,N$.

Definition~\ref{def:basis} allows us to transform the discrete problems~\eqref{eq:weak} on $\TT_\ell,\ldots,\TT_{\ell+N}$ into matrix problem as follows: For $0\leq k\leq N+1$ define $\lambda(k)\in\R^{n_{k+1}}$ such that $u_{\ell+k}=\sum_{n=1}^{n_{k+1}} \lambda(k)_n w_n^\XX$. We introduce the notation
	 \begin{align*}
	 \lambda(k):=(\lambda(k)_1,\lambda(k)_{2},\ldots,\lambda(k)_{n_{k+1}})&\in\R^{n_{k+1}},\\
	 F:=(f(w_1^\YY),\ldots, f(w_{{\rm dim}}^\YY))&\in\R^{\rm dim}
	 \end{align*}
	 and observe that~\eqref{eq:weak} implies
	\begin{align}\label{eq:l2sol}
	M[k]\lambda(k)=F[k]\quad\text{\rm for all }k=0,\ldots,N+1.
	\end{align}
	 This matrix formulation will be used to derive a sufficient criterion
	for general quasi-orthogonality~\eqref{eq:qoN}.
 
 The following proof is very similar to~\cite[Lemma~3.3]{stokesopt}. However, in contrast to the earlier result, we require the bound on $C(N)$ explicitly in $\norm{U}{2}$ and hence repeat the arguments.
 \begin{lemma}\label{lem:luqoN}
 Let $\ell,N\in\N$ and consider $M$ from~\eqref{eq:Mdef}. If $M$ has a block-$LU$-factorization $M=LU$,
	then there holds general quasi-orthogonality~\eqref{eq:qoN} with 
	\begin{align*}
	 C(N)\leq {\frac{C_a^2}{\gamma^2}} \norm{U}{2}^2\sup_{{k=1,\ldots,N+1}}\norm{U^{-1}(:,k)}{2}^2,
	\end{align*}
	where $C_a$ and $\gamma$ are defined in~\eqref{eq:cont}--\eqref{eq:uniforminfsup}.
\end{lemma}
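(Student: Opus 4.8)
The plan is to transport the increments $u_{\ell+k+1}-u_{\ell+k}$ into the matrix world of~\eqref{eq:l2sol}, to use the factorization $M=LU$ in order to express each increment through a single block column of $U^{-1}$, and then to sum up, controlling the accumulated right-hand side by $\norm{u-u_\ell}{\XX}$. A first reduction makes the initial iterate homogeneous: replacing $f$ by $\widetilde f:=f-a(u_\ell,\cdot)\in\YY^\star$ and using that $u_\ell\in\XX_\ell\subseteq\XX_{\ell+k}$ is admissible by nestedness together with unique solvability from~\eqref{eq:uniforminfsup}, the exact and discrete solutions of the modified problem become $\widetilde u=u-u_\ell$ and $\widetilde u_{\ell+k}=u_{\ell+k}-u_\ell$. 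In particular $\widetilde u_\ell=0$, the increments $u_{\ell+k+1}-u_{\ell+k}=\widetilde u_{\ell+k+1}-\widetilde u_{\ell+k}$ are unchanged, and the discrete inf-sup stability~\eqref{eq:uniforminfsup} together with continuity~\eqref{eq:cont} yield $\norm{\widetilde u_{\ell+N+1}}{\XX}\le\tfrac{C_a}{\gamma}\norm{\widetilde u}{\XX}=\tfrac{C_a}{\gamma}\norm{u-u_\ell}{\XX}$. Writing $\widetilde\lambda(k)\in\R^{n_{k+1}}$ for the coefficient vector of $\widetilde u_{\ell+k}$ in the orthonormal basis of Definition~\ref{def:basis} and $\widetilde F$ for the corresponding load vector, orthonormality turns this into $\norm{u_{\ell+k+1}-u_{\ell+k}}{\XX}=\norm{\widetilde\lambda(k+1)-\widetilde\lambda(k)}{\ell_2}$ (after padding $\widetilde\lambda(k)$ by zeros to $\R^{n_{k+2}}$), $M[k]\widetilde\lambda(k)=\widetilde F[k]$, and $\norm{\widetilde\lambda(N+1)}{\ell_2}\le\tfrac{C_a}{\gamma}\norm{u-u_\ell}{\XX}$.

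Next, I would exploit the block-triangular structure. Since $L$ and $U$ are block-triangular, taking leading principal block submatrices commutes with the factorization and with inversion, so $M[k]=L[k]U[k]$, $U[k]^{-1}=(U^{-1})[k]$, and $(L^{-1}\widetilde F)[k]=L[k]^{-1}\widetilde F[k]$ for every $k$. With $\widetilde z:=L^{-1}\widetilde F$ this gives $U[k]\widetilde\lambda(k)=\widetilde z[k]$ for all $k=0,\dots,N+1$. Subtracting the identities for $k$ and $k+1$ and using that the last block row of $U[k+1]$ annihilates the zero padding of $\widetilde\lambda(k)$ yields
\begin{align*}
 U[k+1]\big(\widetilde\lambda(k+1)-\widetilde\lambda(k)\big)=(0,\dots,0,\widetilde z(k+1))^\top\in\R^{n_{k+2}},
\end{align*}
where $\widetilde z(k+1)$ denotes the $(k+1)$-st block of $\widetilde z$, i.e.\ the only nonzero block sits in position $k+1$. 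Hence $\widetilde\lambda(k+1)-\widetilde\lambda(k)=U[k+1]^{-1}(:,k+1)\,\widetilde z(k+1)$, and because $U^{-1}$ is block upper triangular, its block column $U^{-1}(:,k+1)$ has vanishing rows below the $(k+1)$-st block, so that $\norm{U[k+1]^{-1}(:,k+1)}{2}=\norm{U^{-1}(:,k+1)}{2}$; consequently $\norm{u_{\ell+k+1}-u_{\ell+k}}{\XX}\le\norm{U^{-1}(:,k+1)}{2}\,\norm{\widetilde z(k+1)}{\ell_2}$ for $k=0,\dots,N$.

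Squaring, summing over $k=0,\dots,N$, pulling the supremum of $\norm{U^{-1}(:,k)}{2}^2$ over $k=1,\dots,N+1$ out of the sum, and estimating $\sum_{k=1}^{N+1}\norm{\widetilde z(k)}{\ell_2}^2\le\norm{\widetilde z}{\ell_2}^2=\norm{U\widetilde\lambda(N+1)}{\ell_2}^2\le\norm{U}{2}^2\norm{\widetilde\lambda(N+1)}{\ell_2}^2$ then finishes the proof via the bound on $\norm{\widetilde\lambda(N+1)}{\ell_2}$ from the first step, giving exactly $C(N)\le\tfrac{C_a^2}{\gamma^2}\norm{U}{2}^2\sup_{k=1,\dots,N+1}\norm{U^{-1}(:,k)}{2}^2$.

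The main obstacle is the block-triangular bookkeeping: one must check carefully that leading principal block submatrices commute with inversion for $L$ and $U$, that $(L^{-1}\widetilde F)[k]=L[k]^{-1}\widetilde F[k]$, and—most importantly—that the increment $\widetilde\lambda(k+1)-\widetilde\lambda(k)$ is generated by \emph{exactly one} block column of $U^{-1}$. This last point is what makes the final constant depend on $\sup_k\norm{U^{-1}(:,k)}{2}^2$ rather than on $\norm{U^{-1}}{2}^2$ or on level-dependent quantities, and it is the reason for using the nested orthonormal basis of Definition~\ref{def:basis} instead of an arbitrary basis. All remaining estimates are immediate consequences of~\eqref{eq:Mnorm}, orthonormality, and the inf-sup argument of the first step.
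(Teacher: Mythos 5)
Your proof is correct and follows essentially the same route as the paper's: orthonormality of the basis from Definition~\ref{def:basis} reduces everything to $\ell_2$-norms of coefficient vectors, the block-$LU$-factorization localizes each increment to a single block column of $U^{-1}$ (your identity $U[k]\widetilde\lambda(k)=\widetilde z[k]$ with $\widetilde z=L^{-1}\widetilde F=U\widetilde\lambda(N+1)$ is exactly the paper's $(U\lambda(N+1))[k]=U[k]\lambda(k)$), and the telescoping sum is closed by the discrete inf-sup bound $\norm{u_{\ell+N+1}-u_\ell}{\XX}\le (C_a/\gamma)\norm{u-u_\ell}{\XX}$. The only difference is your initial shift $f\mapsto f-a(u_\ell,\cdot)$, which forces $\widetilde\lambda(0)=0$ and streamlines the bookkeeping but is otherwise cosmetic.
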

\begin{proof}
	Due to the block-triangular structure, there holds $M[k]=L[k]U[k]$ for the block-$LU$-factorization.
	{Recall that the $L$-factor of the block-$LU$-factorization is even lower triangular (due to $L(k,k)=I$).
	Hence,} there holds for all $1\leq i\leq n_{k+1}$ that
	\begin{align*}
	(L[k]U[k]\lambda(k))_i&=(M[k]\lambda(k))_i=(F[k])_i=F_i=(M\lambda(N+1))_i\\
	&=(LU\lambda(N+1))_i=(L[k](U\lambda(N+1))[k])_i.
	\end{align*}
	Since $L$ and hence also $L[k]$ is regular, this shows that $(U\lambda(N+1))[k]=U[k]\lambda(k)$.
	Moreover, when we extend $\lambda(k)$ with zeros, there holds $U[k]\lambda(k)=U\lambda(k)$ due to the block-upper triangular structure of $U$. Altogether, this proves for all $0\leq k\leq N+1$
	\begin{align}\label{eq:onlyone}
	(U\lambda(N+1))_i=(U\lambda(k))_i\quad\text{for all }1\leq i\leq n_{k+1} \quad\text{and}\quad (U\lambda(k))_i=0\quad\text{for all }i>n_{k+1}.
	\end{align}	
	Moreover, since $\BB^\XX$ is $\XX$-orthonormal, there holds
	\begin{align*}
	 \norm{u_{\ell+k+1}-u_{\ell+k}}{\XX}&=\norm{\lambda(k+1)-\lambda(k)}{\ell_2}
	= \norm{U^{-1}(U\lambda(k+1)-U\lambda(k))}{\ell_2}.
	\end{align*}
    From~\eqref{eq:onlyone}, we see that $U\lambda(k+1)-U\lambda(k)$ has non-zero entries only for $j=n_{k+1}+1,\ldots,n_{k+2}$, which corresponds to the $k+1$-th block. Therefore, we may estimate
    \begin{align*}
     \norm{u_{\ell+k+1}-u_{\ell+k}}{\XX}&\leq \norm{U^{-1}(:,k+1)}{2}\norm{U\lambda(k+1)-U\lambda(k)}{\ell_2}\\
     &=\norm{U^{-1}(:,k+1)}{2}\norm{(U\lambda(N+1))_{n_{k+1}+1:n_{k+2}}}{\ell_2}.
    \end{align*}	
	This shows
	\begin{align*}
	\sum_{k=0}^{N}\norm{u_{\ell+k+1}-u_{\ell+k}}{\XX}^2&\leq  \max_{k=0,\ldots,N}\norm{U^{-1}(:,k+1)}{2}^2\sum_{k=0}^N\norm{(U\lambda(N+1))_{n_{k+1}+1:n_{k+2}}}{\ell_2}^2\\
	&=
	\max_{k=0,\ldots,N}\norm{U^{-1}(:,k+1)}{2}^2
	\norm{(U\lambda(N+1))_{n_{1}+1:n_{{N+2}}}}{\ell_2}^2\\
	&=\max_{k=0,\ldots,N}\norm{U^{-1}(:,k+1)}{2}^2\norm{U\lambda(N+1) - U\lambda(0)}{\ell_2}^2\\
	&\leq  \norm{U}{2}^2\max_{k=0,\ldots,N}\norm{U^{-1}(:,k+1)}{2}^2\norm{\lambda(N+1) - \lambda(0)}{\ell_2}^2.
	\end{align*}
	A final application of the $\XX$-orthogonality of $\BB^\XX$ together with the discrete inf-sup stability~\eqref{eq:uniforminfsup} shows
	\begin{align*}
	 \norm{\lambda(N+1) - \lambda(0)}{\ell_2}&=
	 \norm{u_{\ell+N+1}-u_\ell}{\XX}\leq {C_a/\gamma \norm{u-u_\ell}{\XX} } 
	\end{align*}
and concludes the proof.
\end{proof}

\section{Growth of the $LU$-factors of regular matrices}\label{sec:growth}

 The goal of this section is to bound the growth of the factors $L,U\in\R^{n\times n}$ of the block-$LU$-factorization from Section~\ref{sec:lu}. This is the key step in the optimality proof of Section~\ref{sec:proof2} and the main reason why the relaxed quasi-orthogonality~\eqref{eq:qoN} follows from the inf-sup stability of~\eqref{eq:weak}. Since the result might be of independent interest, and to convey the essential argument, we first consider the standard normalized $LU$-factorization and extend the result to block-matrices in Section~\ref{sec:extension} below.
 
 \subsection{Normalized $LU$-factorization}
 In the following, we consider the normalized $LU$-factorization $M=LU$ with $L_{ii}=1$, $1\leq i\leq n$. We recall the Schatten norms $\snorm{\cdot}{p}$ for $1\leq p\leq \infty$ (see, e.g.,~\cite[Equation~IV.31]{bhatia}) defined for matrices $M\in\R^{n\times n}$ via
 \begin{align*}
  \snorm{M}{p}:=\Big(\sum_{m=1}^n \sigma_m(M)^p\Big)^{1/p},
 \end{align*}
where $\sigma_m(M)$ denote the singular values of $M$ in descending order. Note that $\snorm{M}{\infty}:=\sigma_1(M)=\norm{M}{2}$ and $\snorm{M}{2}=\norm{M}{F}:=\sqrt{\sum_{i,j=1}^n |M_{ij}|^2}$ is the Frobenius norm. Note that since $\sigma_i(A^TA)=\sigma_i(A)^2$, we have $\snorm{A^TA}{p}=\snorm{A}{2p}^2$ for all $p\in\N{\cup\{\infty\}}$. {Since $\sigma_i(A)=\sigma_i(A^T)$, we also have $\snorm{A}{p}=\snorm{A^T}{p}$ for $p\in\N{\cup\{\infty\}}$.}

Schatten norms satisfy the general H\"older inequality (see, e.g.,~\cite[Lemma~XI.9.20]{hoelder}), i.e.,  for $1/r=1/p+1/q$ and $r,p,q\in [1,\infty]$ there holds 
\begin{align}\label{eq:hoelder}
 \snorm{AB}{r}\leq \snorm{A}{p}\snorm{B}{q} \quad \text{for all }A,B\in\R^{n\times n}.
\end{align}
Note that the case $p=\infty$ and $r=q$ also follows from the well-known inequality $\sigma_m(AB)\leq \sigma_1(A)\sigma_m(B)$ for all $1\leq m\leq n$. 

We use the standard notation $M_{i:j, k:\ell}\in \R^{(j-i+1)\times (\ell-k+1)}$ to refer to sub-matrices of $M$.

\begin{remark}
Note that a quite straightforward  argument (see also Lemma~\ref{lem:blockUstab} below) shows $\snorm{U^{-1}}{\infty}\lesssim \snorm{M^{-1}}{2}$. The standard norm equivalence between spectral and Frobenius norm implies $\snorm{M^{-1}}{2}\lesssim n^{1/2}\snorm{M^{-1}}{\infty}$.
In order to improve this estimate to $n^{1/2-\delta}$ (which is essential for the optimality proof blow), we use the Schatten norms $\snorm{\cdot}{p}$ for $p>2$ and aim to leverage the improved norm equivalence $\snorm{\cdot}{p}\leq n^{1/p}\snorm{\cdot}{\infty}$.
\end{remark}

The main idea of this section is to exploit the identity $(U^{-1})_{ij} = ((M|_{j\times j})^{-1})_{ij}$ for all $1\leq i\leq j$ (see, e.g.,~\cite[Proposition~1]{LU} or~\eqref{eq:luid} below) together with a Neumann series expansion of $(M|_{j\times j})^{-1}$, {where $M|_{j\times j}\in \R^{j\times j}$ denotes the upper left $j\times j$-submatrix of $M$}. Indeed, if we find $\alpha>0$ such that $\snorm{I-\alpha M|_{j\times j}M|_{j\times j}^T}{\infty}<1$ uniformly in $j=1,\ldots,n$, there holds
 \begin{align}\label{eq:neumann}
  \begin{split}
 (M|_{j\times j})^{-1} &= \alpha M|_{j\times j}^T(I-(I-\alpha M|_{j\times j}M|_{j\times j}^T))^{-1} \\
 &=  \alpha M|_{j\times j}^T\Big(
 I+\sum_{m=1}^\infty (I-\alpha M|_{j\times j}M|_{j\times j}^T)^{m}\Big).
 \end{split}
 \end{align}
While a direct estimate of $\snorm{U^{-1}}{\infty}$ via the previous two identities seems difficult, we show in Lemma~\ref{lem:Astar2} below that a representation of~\eqref{eq:neumann} via iterated triangular truncation leads to a useful bound which then implies the result in Theorem~\ref{thm:LUbound}.

Hence, we define the triangular truncation operators $\LL,\UU\colon \R^{n\times n}\to\R^{n\times n}$ by
 \begin{align*}
  \UU(M)_{ij}:=\begin{cases} M_{ij} & i\leq j,\\
                0 & i>j
               \end{cases}
 \end{align*}
 and $\LL(M)=M-\UU(M)$. It is well known, see, e.g.,~\cite[Equation~15]{tricut}, that there holds the estimate
 \begin{align}\label{eq:tri2}
  \snorm{\UU(M)}{\infty}\leq C\log(n)\snorm{M}{\infty}
 \end{align}
for some $C>0$ (see also Lemma~\ref{lem:blocktruncinfty} below). In the following, we prove a version of this bound for Schatten norms with $p<\infty$. This can be derived from similar results in the literature, see~\cite{davies,krein2}, but the short proof of the next lemma is from~\cite{mathoverflow}.
 \begin{lemma}\label{lem:truncation}
  Given $j\in\N$, there holds
  \begin{align*}
   \snorm{\UU(M)}{2^j}\leq 2^{j-1}\snorm{M}{2^j}
  \end{align*}
for all $M\in\R^{n\times n}$.
 \end{lemma}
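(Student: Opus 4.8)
The plan is to prove the bound $\snorm{\UU(M)}{2^j}\le 2^{j-1}\snorm{M}{2^j}$ by induction on $j$, using a slick doubling trick that reduces the Schatten-$2^j$ estimate to the Schatten-$2^{j-1}$ estimate applied to a related matrix. The base case $j=0$ asserts $\snorm{\UU(M)}{1}\le\snorm{M}{1}$ for the trace norm; this follows because $M\mapsto\UU(M)$, being a pinching-type (Schur-multiplier) map onto the upper-triangular part, is a contraction in every Schatten norm — in particular in the trace norm — or can be seen directly from the fact that $\UU(M)$ and $\LL_{0}(M):=M-\UU(M)$ together "pinch" $M$ and $\snorm{\UU(M)}{1}+\snorm{\text{strict lower part}}{1}$ controls... actually the cleanest route for the base case is: $\snorm{\UU(M)}{1}=\sup\{|\mathrm{tr}(\UU(M)^T X)| : \snorm{X}{\infty}\le 1\}=\sup\{|\mathrm{tr}(M^T\UU(X))| : \snorm{X}{\infty}\le 1\}$, and then bound $\snorm{\UU(X)}{\infty}$... which reintroduces the $\log$. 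So instead I would take the genuinely elementary base case $j=0$ from the identity $\snorm{\UU(M)}{1}\le\snorm{M}{1}$, which holds because the diagonal-inclusive triangular truncation is a positive-off-diagonal Schur multiplier with all multiplier entries in $\{0,1\}$ and such $0/1$ triangular truncations are norm-one in $\snorm{\cdot}{1}$ by a direct duality/rearrangement argument (this is classical; see the references \cite{davies,krein2} cited in the text, or argue by approximating the indicator of $\{i\le j\}$).

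The heart of the argument is the induction step. Suppose the claim holds for $j-1$. Write $N:=M^TM$, so $\snorm{N}{2^{j-1}}=\snorm{M}{2^j}^2$. The key observation is an operator identity expressing $\UU(M)^T\UU(M)$ in terms of $\UU$ and $\LL_{0}$ (the strict lower truncation) applied to $N$: decomposing $M=\UU(M)+\LL_{0}(M)$ and expanding, one gets $\UU(M)^T\UU(M)=\UU(M)^TM-\UU(M)^T\LL_{0}(M)$, and because $\UU(M)^T$ is lower triangular while $\LL_{0}(M)$ is strictly lower triangular, the product $\UU(M)^T\LL_{0}(M)$ is strictly lower triangular; likewise $\UU(M)^TM$ has upper-triangular part equal to $\UU(\UU(M)^TM)=\UU(N-\LL_{0}(M)^TM)$, and one massages these until $\UU(M)^T\UU(M)$ is written as the diagonal-inclusive truncation of $N$ plus a correction that is itself a truncation of $N$. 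Concretely, the identity I would aim for is
\begin{align*}
 \UU(M)^T\UU(M) + \LL_{0}(M)^T\LL_{0}(M) = \text{(diagonal of }N) + \UU(N)^T\!\!\restriction + \ldots
\end{align*}
— rather than guess the exact form, the structural point is: $\UU(M)^T\UU(M)$ equals a $\{0,1,2\}$-weighted Schur multiplier applied to $N$, whose entries are bounded by $2$ and which is supported on, say, the upper triangle; then taking $\snorm{\cdot}{2^{j-1}}$ and using subadditivity plus the $j-1$ case gives $\snorm{\UU(M)^T\UU(M)}{2^{j-1}}\le 2\snorm{\UU(N)}{2^{j-1}}+\snorm{\text{diag}(N)}{2^{j-1}}\le(2\cdot 2^{j-2}+1)\snorm{N}{2^{j-1}}\le 2^{2(j-1)}\snorm{N}{2^{j-1}}$ for $j\ge 1$; since the left side is $\snorm{\UU(M)}{2^j}^2$ and the right side is $2^{2(j-1)}\snorm{M}{2^j}^2$, taking square roots yields exactly $\snorm{\UU(M)}{2^j}\le 2^{j-1}\snorm{M}{2^j}$.

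The main obstacle is pinning down the correct combinatorial identity for $\UU(M)^T\UU(M)$ in terms of a bounded Schur multiplier applied to $N=M^TM$, and in particular controlling the multiplier's sup-norm by a constant (here $2$) independent of $n$; the $\log n$ factor in the naive bound \eqref{eq:tri2} comes precisely from iterating truncations at $p=\infty$, and the whole point is that at finite $p=2^j$ the doubling trick replaces that logarithm by the geometric factor $2^{j-1}$. Once the identity is in place, the rest is subadditivity of Schatten norms under sums of at most a bounded number of terms, the elementary estimate $\snorm{\text{diagonal part of }N}{2^{j-1}}\le\snorm{N}{2^{j-1}}$ (the diagonal is itself a norm-one pinching), and the arithmetic $2\cdot 2^{j-2}+1\le 2^{2(j-1)}$, which holds for all $j\ge 1$. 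I would therefore organize the proof as: (i) state and prove the base case $j=0$ via the contractivity of $0/1$ triangular Schur multipliers in $\snorm{\cdot}{1}$; (ii) establish the Schur-multiplier identity for $\UU(M)^T\UU(M)$; (iii) close the induction by the norm estimate above.
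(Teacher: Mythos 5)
Your proposal has two genuine gaps, one in the base case and one in the induction step, and both would need to be fixed before the argument goes through.

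\textbf{Base case.} You want to start the induction at $j=0$ with the claim $\snorm{\UU(M)}{1}\le\snorm{M}{1}$, asserting that a $\{0,1\}$-valued triangular Schur multiplier is a contraction in the trace norm. This is false: triangular truncation is the standard example of a Schur multiplier that is \emph{not} uniformly bounded on $S_1$; its $S_1\to S_1$ norm grows like $\log n$ (dually to the $\log n$ growth at $p=\infty$ in~\eqref{eq:tri2}, which you yourself noticed creeps in via the duality attempt). There is no ``rearrangement argument'' that removes that logarithm, because $\log n$ is sharp. Also note that with the stated constant $2^{j-1}$ the $j=0$ case would read $\snorm{\UU(M)}{1}\le\frac12\snorm{M}{1}$, which is obviously false already for upper triangular $M$. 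The correct and trivial base case is $p=2$: in the Frobenius norm, truncation just discards entries, so $C_2=1$.

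\textbf{Induction step.} You aim to express $\UU(M)^T\UU(M)$ as a bounded Schur multiplier (entries in $\{0,1,2\}$) applied to $N=M^TM$. No such identity exists: $(\UU(M)^T\UU(M))_{ij}=\sum_{k\le\min(i,j)}M_{ki}M_{kj}$ is a \emph{truncated} sum, whereas $N_{ij}=\sum_k M_{ki}M_{kj}$ is the full sum, so the $(i,j)$ entry of $\UU(M)^T\UU(M)$ is not a scalar multiple of $N_{ij}$. The identity that actually works, and the one the paper uses, is
\begin{align*}
\UU(M)^T\UU(M)=\UU\big(\UU(M)^T M\big)+\LL\big(M^T\UU(M)\big),
\end{align*}
which keeps $\UU(M)$ inside the argument. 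Applying $\snorm{\cdot}{p}$, the truncation bound $C_p$, and H\"older then gives the \emph{quadratic} inequality
$\snorm{\UU(M)}{2p}^2\le 2C_p\,\snorm{\UU(M)}{2p}\snorm{M}{2p}$,
and one divides by $\snorm{\UU(M)}{2p}$ to get $C_{2p}\le 2C_p$. That division step is the crucial replacement for your (nonexistent) Schur-multiplier reduction to $N$: the doubling is achieved without ever isolating $N=M^TM$, and it is precisely what avoids both the logarithm and the circularity. Starting from $C_2=1$ this recursion gives $C_{2^j}\le 2^{j-1}$ for $j\ge1$, which is the statement.

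So the overall shape (doubling from $p$ to $2p$ via $\UU(M)^T\UU(M)$) is the right idea, but the two load-bearing steps — the base case at $p=1$ and the claimed Schur-multiplier identity for $\UU(M)^T\UU(M)$ — are both incorrect and must be replaced by the $p=2$ base case and the quadratic-inequality trick above.
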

\begin{proof}
 Note that $\UU(L)=\LL(U)=0$ for upper-triangular $U$ and strictly lower-triangular $L$. This implies
 \begin{align*}
  \UU(M)^T \UU(M) =(\LL+\UU)( \UU(M)^T \UU(M))= \UU(\UU(M)^T M) + \LL( M^T\UU(M)).
 \end{align*}
Hence, we have
\begin{align*}
 \snorm{\UU(M)}{2p}^2 &= \snorm{\UU(M)^T\UU(M)}{p}\leq \snorm{\UU(\UU(M)^T M)}{p}+\snorm{\LL( M^T\UU(M))}{p}.
\end{align*}
Let $C_p$ denote the maximum of the operator norms of $\LL$ and $\UU$ with respect to $\snorm{\cdot}{p}$. Then, the above implies
\begin{align*}
 \snorm{\UU(M)}{2p}^2\leq  C_p\big(\snorm{\UU(M)^T M}{p}+\snorm{M^T\UU(M)}{p}\big)\leq
 2C_p \snorm{\UU(M)}{2p}\snorm{M}{2p},
\end{align*}
where we used the H\"older inequality~\eqref{eq:hoelder} as well as {the invariance of $\snorm{\cdot}{p}$ with respect to transposition} in the last step. This and the analogous estimate for $\snorm{\LL(M)}{2p}$ imply immediately that $C_{2p}\leq 2 C_p$. Moreover, $\snorm{\cdot}{2}=\norm{\cdot}{F}$ implies $C_2=1$ and thus ${C_{2^j}\leq 2^{j-1}}$ for $j\in\N$. This concludes the proof.
\end{proof}

 We consider an auxiliary quantity that will be used in the proofs below.
 Given $A\in\R^{n\times n}$ and $k\in\N$, define $A_\star^k\in\R^{n\times n}$ by
 \begin{align*}
  (A^k_\star)_{ij}:=\begin{cases} ((I_j-A|_{j\times j}A|_{j\times j}^T)^k)_{ij} & i\leq j,\\
                     0 &i>j,
                    \end{cases}
 \end{align*}
 where $I_j\in\R^{j\times j}$ denotes the identity matrix.
 
 In the following, we will use the fact that for matrices $A,B\in\R^{n\times n}$ and $1\leq i\leq j\leq n$, there holds
 \begin{align}\label{eq:tricut}
  (A\UU(B))_{ij} = \sum_{k=1}^n A_{ik}\UU(B)_{kj} =  
  \sum_{k=1}^j A_{ik}B_{kj} = (A|_{j\times j}B|_{j\times j})_{ij}.
 \end{align}

 \begin{lemma}\label{lem:Astar2}
Let $A\in\R^{n\times n}$. Then, there holds for $k>1$ that
\begin{align}\label{eq:Astarit}
 A^k_\star = A_\star^{k-1} -\UU( A\UU(A^TA_\star^{k-1}))\quad\text{with}\quad A^1_\star = {I_n}-\UU((A(\UU(A^T)))
\end{align}
as well as for all $1\leq j\leq n$ that
\begin{align}\label{eq:Astarcol}
 (A_\star^k)_{1:j, j} = (I_j-A|_{j\times j}A|_{j\times j}^T) (A_\star^{k-1})_{1:j,j}.
\end{align}
Moreover, let $M\in\R^{n\times n}$ with $A:=\sqrt{\alpha}M$ such that $\max_{1\leq j\leq n}\snorm{{I_j}-\alpha M|_{j\times j}M|_{j\times j}^T}{\infty}<1$ for some $\alpha>0$. Then, $M$ has an $LU$-factorization $M=LU$ with
\begin{align}\label{eq:Mrep}
  U^{-1} = \alpha\UU\Big( M^T\Big({I_n}+\sum_{m=1}^\infty A_\star^m\Big)\Big).
\end{align}
\end{lemma}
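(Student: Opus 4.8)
The plan is to verify the three assertions in turn, the first two by direct matrix manipulation and the third by assembling the Neumann series~\eqref{eq:neumann} through iterated triangular truncation. For the recursion~\eqref{eq:Astarit}, I would start from the definition of $A_\star^{k}$ as the upper-triangular truncation of $(I_j - A|_{j\times j}A|_{j\times j}^T)^k$ in each column $j$, write $(I_j - A|_{j\times j}A|_{j\times j}^T)^k = (I_j - A|_{j\times j}A|_{j\times j}^T)^{k-1} - A|_{j\times j}A|_{j\times j}^T(I_j - A|_{j\times j}A|_{j\times j}^T)^{k-1}$, and then identify the second term using the truncation identity~\eqref{eq:tricut}: for $i\le j$ one has $(A|_{j\times j}B|_{j\times j})_{ij} = (A\,\UU(B))_{ij}$, applied twice (once for $A^T$ against the matrix whose $j$-th column is $(A_\star^{k-1})_{1:j,j}$, i.e. $\UU(A^T A_\star^{k-1})$, and once for $A$ against the result). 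The base case $A_\star^1 = I_n - \UU(A\,\UU(A^T))$ is the same computation with $k=1$, noting $\UU(I_j) = I_j$ on the diagonal block. The column identity~\eqref{eq:Astarcol} is essentially immediate: restricting the $k$-th power to its first $j$ rows in column $j$ only sees the leading $j\times j$ block, so $(A_\star^k)_{1:j,j} = \big((I_j - A|_{j\times j}A|_{j\times j}^T)^k\big)_{1:j,j} = (I_j - A|_{j\times j}A|_{j\times j}^T)\big((I_j - A|_{j\times j}A|_{j\times j}^T)^{k-1}\big)_{1:j,j} = (I_j - A|_{j\times j}A|_{j\times j}^T)(A_\star^{k-1})_{1:j,j}$.

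For the representation~\eqref{eq:Mrep}, the starting point is that under $\snorm{I_j - \alpha M|_{j\times j}M|_{j\times j}^T}{\infty} < 1$ for all $j$, every leading submatrix $M|_{j\times j}$ is invertible (its singular values are bounded away from $0$), so in particular the strict principal minors of $M$ are regular and the $LU$-factorization $M = LU$ exists and is unique. The known entrywise identity $(U^{-1})_{ij} = \big((M|_{j\times j})^{-1}\big)_{ij}$ for $i\le j$ (the reference~\cite{LU}, or~\eqref{eq:luid}) lets me compute $U^{-1}$ column by column. In column $j$, the Neumann expansion~\eqref{eq:neumann} gives $(M|_{j\times j})^{-1} = \alpha M|_{j\times j}^T\big(I_j + \sum_{m\ge1}(I_j - \alpha M|_{j\times j}M|_{j\times j}^T)^m\big)$, and taking the $(i,j)$ entry for $i\le j$ and using~\eqref{eq:tricut} again turns $\alpha M|_{j\times j}^T \cdot (\text{that bracket})$ into $\alpha\,\UU\big(M^T(I_n + \sum_{m\ge1}A_\star^m)\big)_{ij}$, since the $j$-th column of the bracketed operator restricted to $j$ rows is exactly $(I_j + \sum_m (I_j - \alpha M|_{j\times j}M|_{j\times j}^T)^m)_{1:j,j}$, which matches the $j$-th column of $I_n + \sum_m A_\star^m$ in its first $j$ entries. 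Since $U^{-1}$ is itself upper-triangular, the outer $\UU$ is harmless, and this establishes~\eqref{eq:Mrep}.

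The main obstacle I anticipate is purely bookkeeping: making the substitution "leading $j\times j$ block $\leftrightarrow$ truncation operator" rigorous requires care that in~\eqref{eq:tricut} the index $i$ is always $\le j$, and that the convergence of the Neumann series (needed to interchange the series with the truncation and with extraction of the $(i,j)$-entry) is justified uniformly — which it is, since $\snorm{I_j - \alpha M|_{j\times j}M|_{j\times j}^T}{\infty}<1$ gives a geometric bound on $\snorm{A_\star^m}{\infty}\le\snorm{(I_j - \alpha M|_{j\times j}M|_{j\times j}^T)^m}{\infty}$ column-wise. There is no deep difficulty, only the need to keep the truncation operators, the column restrictions, and the powers consistently aligned.
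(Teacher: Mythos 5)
Your proof matches the paper's argument for Lemma~\ref{lem:Astar2} essentially step for step: the same algebraic split $(I-X)^k=(I-X)^{k-1}-X(I-X)^{k-1}$ read off column by column via~\eqref{eq:tricut}, the same entrywise identity $(U^{-1})_{ij}=((M|_{j\times j})^{-1})_{ij}$, and the same columnwise Neumann expansion~\eqref{eq:neumann}. The one imprecision is the displayed inequality $\snorm{A_\star^m}{\infty}\le\snorm{(I_j-\alpha M|_{j\times j}M|_{j\times j}^T)^m}{\infty}$, which should be read entrywise/columnwise (as your qualifier suggests) and not as a bound on the spectral norm of the full matrix $A_\star^m$ --- the latter genuinely grows like $n^{1/p}q^m$, which is precisely the point of Proposition~\ref{prop:LUbound} --- but since the interchange of series and truncation only requires the entrywise geometric bound $|(A_\star^m)_{ij}|\le q^m$, your argument is still correct.
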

\begin{proof}
 Similar to~\eqref{eq:tricut}, we obtain for $1\leq i\leq j$
 \begin{align}\label{eq:Astarid}
  ({I_n}-A\UU(A^T))_{ij} = ({I_n})_{ij}-\sum_{m=1}^j A_{im} A^T_{mj} = ({I_j}-A|_{j\times j}A|_{j\times j}^T)_{ij} = (A_\star^1)_{ij},
 \end{align}
 which shows $A_\star^1={I_n}-\UU(A(\UU(A^T)))$.
Furthermore, we get for $1\leq i\leq j$
\begin{align}\label{eq:idid}
(A_\star^{k-1}-A\UU(A^TA_\star^{k-1}))_{ij}&= (A_\star^{k-1})_{ij}- \sum_{m=1}^j 
A_{im}\sum_{m'=1}^j  A_{mm'}^T(A_\star^{k-1})_{m'j}
\\
\label{eq:idid2}
&=\sum_{m'=1}^j(I_j-A|_{j\times j}A|_{j\times j}^T)_{im'}(A_\star^{k-1})_{m'j}.
\end{align}
{Note that~\eqref{eq:idid2} and
\begin{align*}
\sum_{m'=1}^j(I_j-A|_{j\times j}A|_{j\times j}^T)_{im'}(A_\star^{k-1})_{m'j}=
  \big((I_j-A|_{j\times j}A|_{j\times j}^T)(I_j-A|_{j\times j}A|_{j\times j}^T)^{k-1}\big )_{ij}=(A_\star^k)_{ij}
\end{align*}
show~\eqref{eq:Astarit}. This, together with~\eqref{eq:idid} implies~\eqref{eq:Astarcol}.}

To prove~\eqref{eq:Mrep}, we use the identity $(U^{-1})_{ij} = ((M|_{j\times j})^{-1})_{ij}$ for all $1\leq i\leq j$ which {follows from elementary linear algebra and is proved in the more general block-matrix case in~\eqref{eq:luid} below (this identity is used in a similar context in~\cite[Proposition~1]{LU}).} The assumption on $M$ allows us to use~\eqref{eq:neumann} and, similar to~\eqref{eq:tricut}, we show for $1\leq i\leq j$ that
 \begin{align*}
  U^{-1}_{ij} = (M|_{j\times j})^{-1}_{ij} &= \alpha \sum_{m'=1}^j (M|_{j\times j}^T)_{im'}\Big(({I_j})_{m'j} + \sum_{m=1}^\infty ({I_j}-\alpha M|_{j\times j}M|_{j\times j}^T)^{m}_{m'j}\Big)\\
  &= 
  \alpha \Big(M^T\Big({I_n} + \sum_{m=1}^\infty A_\star^m\Big)\Big)_{ij}.
 \end{align*}
 This can be rewritten as~\eqref{eq:Mrep} and concludes the proof.
\end{proof}

\begin{proposition}\label{prop:LUbound}
Let $C,\gamma>0$.  Then, there exist constants $C_{\rm LU}>0$ and $p>2$ such that for $n\in\N$, all matrices $M\in\R^{n\times n}$ with $\snorm{M}{\infty}\leq C$ and $\max_{1\leq j\leq n}\snorm{(M|_{j\times j})^{-1}}{\infty}\leq 1/\gamma$ have an $LU$-factorization $M=LU$  with lower/upper-triangular $L,U\in\R^{n\times n}$ such that
\begin{align*}
 \snorm{L}{\infty}+\snorm{U^{-1}}{\infty}\leq C_{\rm LU} \,n^{1/p}.
\end{align*}
\end{proposition}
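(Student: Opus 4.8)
The plan is to bound $\snorm{U^{-1}}{\infty}$ and $\snorm{L}{\infty}$ separately, with the $U^{-1}$ bound being the heart of the matter. First I would deal with normalization: by replacing $M$ with $\sqrt{\alpha}M$ for a suitable $\alpha$ depending only on $C$ and $\gamma$, I want $\max_j\snorm{I_j-\alpha M|_{j\times j}M|_{j\times j}^T}{\infty}<1$. Since $\snorm{M}{\infty}\le C$ gives $\snorm{M|_{j\times j}}{\infty}\le C$, and $\snorm{(M|_{j\times j})^{-1}}{\infty}\le 1/\gamma$ gives a lower bound $\gamma$ on the smallest singular value of $M|_{j\times j}$, the eigenvalues of $M|_{j\times j}M|_{j\times j}^T$ all lie in $[\gamma^2,C^2]$; choosing $\alpha=1/C^2$ (or a bit less) makes $\snorm{I_j-\alpha M|_{j\times j}M|_{j\times j}^T}{\infty}\le 1-\gamma^2/C^2=:\rho<1$ uniformly in $j$. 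This puts us exactly in the hypotheses of Lemma~\ref{lem:Astar2}, so $M=LU$ exists and $U^{-1}=\alpha\UU\bigl(M^T(I_n+\sum_{m\ge1}A_\star^m)\bigr)$ with $A=\sqrt\alpha M$.

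Next I would estimate $\snorm{A_\star^m}{p}$ for a fixed $p=2^j>2$ to be chosen. Using the iteration~\eqref{eq:Astarit}, $A_\star^m=A_\star^{m-1}-\UU(A\UU(A^TA_\star^{m-1}))$, together with Lemma~\ref{lem:truncation} (giving $\snorm{\UU(\cdot)}{p}\le 2^{j-1}\snorm{\cdot}{p}$), the Hölder inequality~\eqref{eq:hoelder}, and $\snorm{A}{\infty}\le\sqrt\alpha C\le1$, I expect a recursion of the rough shape $\snorm{A_\star^m}{p}\le\snorm{A_\star^{m-1}}{p}+(2^{j-1})^2\snorm{A}{\infty}^2\snorm{A_\star^{m-1}}{p}$ — but this is useless because it grows. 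The fix is the column estimate~\eqref{eq:Astarcol}: each column satisfies $(A_\star^m)_{1:j,j}=(I_j-A|_{j\times j}A|_{j\times j}^T)(A_\star^{m-1})_{1:j,j}$, so $\norm{(A_\star^m)_{:,j}}{\ell_2}\le\rho\,\norm{(A_\star^{m-1})_{:,j}}{\ell_2}$, hence $\snorm{A_\star^m}{2}=\norm{A_\star^m}{F}\le\rho\,\norm{A_\star^{m-1}}{F}\le\rho^m\sqrt n$ (since $\snorm{A_\star^0}{\infty}\le1$ on the truncated identity gives $\norm{A_\star^0}{F}\le\sqrt n$). Meanwhile the crude spectral bound gives $\snorm{A_\star^m}{\infty}\le\rho^m$. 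Interpolating between $p=2$ and $p=\infty$ via $\snorm{\cdot}{p}\le\snorm{\cdot}{2}^{2/p}\snorm{\cdot}{\infty}^{1-2/p}$ yields $\snorm{A_\star^m}{p}\le\rho^m\,n^{1/p}$, which is summable in $m$. Therefore $\snorm{\sum_{m\ge0}A_\star^m}{p}\le n^{1/p}/(1-\rho)$.

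Then, combining with~\eqref{eq:Mrep} and the truncation bound~\eqref{eq:tri2} (i.e. $\snorm{\UU(\cdot)}{\infty}\lesssim\log n\,\snorm{\cdot}{\infty}$) — or better, using Lemma~\ref{lem:truncation} again at level $p$ to stay with finite $p$ — I would write $\snorm{U^{-1}}{\infty}\le\snorm{U^{-1}}{p}\le\alpha\,2^{j-1}\snorm{M^T}{\infty}\snorm{I_n+\sum_m A_\star^m}{p}\lesssim n^{1/p}$, absorbing all $p$-dependent and $\alpha$-dependent constants into $C_{\rm LU}$. For the $L$-factor, from $M=LU$ I get $L=MU^{-1}$, so $\snorm{L}{\infty}\le\snorm{M}{\infty}\snorm{U^{-1}}{\infty}\lesssim n^{1/p}$ (alternatively, since $\snorm{M^{-1}}{\infty}\le\snorm{(M|_{n\times n})^{-1}}{\infty}\le1/\gamma$, one has $L^{-1}=UM^{-1}$ and a symmetric argument; but the direct bound suffices). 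Choosing $p=2^j$ large enough that $2^{1/p}$-type losses are harmless, this gives the claimed $\snorm{L}{\infty}+\snorm{U^{-1}}{\infty}\le C_{\rm LU}n^{1/p}$ with $p>2$.

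The main obstacle is the second paragraph: the naive Schatten-$p$ recursion from~\eqref{eq:Astarit} does not contract (the $\log$-type truncation losses pile up over $m$), so one genuinely needs the Frobenius-norm contraction~\eqref{eq:Astarcol} acting columnwise to get the factor $\rho^m$, and then a careful interpolation between $p=2$ and $p=\infty$ to transfer both the $\rho^m$ decay and the $n^{1/p}$ size into a single summable Schatten-$p$ bound. Getting $p$ strictly bigger than $2$ — rather than only $p=2$, which would merely reproduce the useless $n^{1/2}$ — is exactly what this interpolation buys, and it is the crux of why inf-sup stability yields $C(N)=\mathcal O(N^{1-\delta})$ rather than $\mathcal O(N)$.
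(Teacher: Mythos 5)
Your overall strategy — rescale $M$ so that $I_j-\alpha M|_{j\times j}M|_{j\times j}^T$ is uniformly contractive, use the column recursion~\eqref{eq:Astarcol} to get Frobenius-norm decay of $A_\star^m$, interpolate up to a Schatten exponent $p>2$ to obtain a summable bound with the desired $n^{1/p}$ factor, and then read off $U^{-1}$ from~\eqref{eq:Mrep} and $L=MU^{-1}$ — is exactly the paper's strategy. The rescaling step, the Frobenius contraction, and the conclusion from~\eqref{eq:Mrep} are all fine.

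The gap is the claim that ``the crude spectral bound gives $\snorm{A_\star^m}{\infty}\leq\rho^m$,'' which you then use as the $p=\infty$ endpoint of the interpolation. This is not justified, and it is not true in general. The matrix $A_\star^m$ is \emph{not} the $m$-th power of a single contraction: its $j$-th column is the $j$-th column of $(I_j-A|_{j\times j}A|_{j\times j}^T)^m$, a different matrix for each $j$. What the column recursion actually gives you is $\max_j\norm{(A_\star^m)_{:,j}}{\ell_2}\leq\rho^m$, i.e.\ a bound on the largest column $\ell_2$-norm, which controls the Frobenius norm (as you use) but not the spectral norm — a rank-one matrix with unit columns already has $\snorm{\cdot}{\infty}=\sqrt n$. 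Even in the degenerate case where all $B_j:=I_j-A|_{j\times j}A|_{j\times j}^T$ coincide, so $A_\star^m=\UU(B^m)$, the triangular truncation costs a factor $\sim\log n$ in spectral norm (this is~\eqref{eq:tri2}), so $\snorm{A_\star^m}{\infty}\leq\rho^m$ already fails there. In the general case the $B_j$ do not commute and no $n$-free spectral bound of this form is available.

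The paper's proof avoids the $p=\infty$ endpoint entirely. It interpolates between $p=2$, where it has the Frobenius \emph{contraction}~\eqref{eq:2red}, and $p=4$, where it proves only a \emph{growth} bound $\snorm{A_\star^k}{4}\leq(1+4\snorm{A}{\infty}^2)\snorm{A_\star^{k-1}}{4}$ via the operator recursion~\eqref{eq:Astarit} and the $n$-independent truncation bound of Lemma~\ref{lem:truncation} at $p=4$. The interpolation parameter $t$ is then chosen small enough (depending only on $\gamma$, $C$) that $q:=\sqrt{1-\gamma^4/C^4}^{\,1-t}(1+4\snorm{A}{\infty}^2)^t<1$, so the net effect is still geometric decay of $\snorm{A_\star^k}{\infty}\leq\snorm{A_\star^k}{2}^{1-t}\snorm{A_\star^k}{4}^t$, at the cost of an $n^{(1-t)/2+t/4}$ factor — which is strictly better than $n^{1/2}$. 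Without the $p=4$ growth bound (or some substitute with an $n$-independent constant), your interpolation has only one usable endpoint and the argument does not close. The remaining steps of your proposal (bounding $\snorm{A_\star^1}{p}$, summing the Neumann series, passing to $U^{-1}$ and $L$, absorbing logs by shrinking $p$) match the paper and are correct once the $p=4$ ingredient is in place.
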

\begin{proof} 
The assumptions on $M$ imply regular principal minors $M|_{j\times j}$ for $1\leq j\leq n$ and hence guarantee the existence of the $LU$-factorization $M=LU$ (see, e.g.,~\cite[Corollary~3.5.4]{lubasic}).
The first step is to show that there exist $p>2$ and $0<q<1$ depending only on $\gamma$ and $C$, such that
\begin{align}\label{eq:Astarconv}
 \snorm{A_\star^k}{\infty}\lesssim q^k n^{1/p}\snorm{A}{\infty}\quad\text{for all }k\in\N.
\end{align}
To that end, recall $A=\sqrt{\alpha}M$ from Lemma~\ref{lem:Astar2} and choose $\alpha=\gamma^2/C^4$ to ensure for $x\in \R^j$
\begin{align*}
 \norm{x-A|_{j\times j}A|_{j\times j}^T x}{\ell_2}^2 &= \norm{x}{\ell_2}^2 - 2\norm{A|_{j\times j}^T x}{\ell_2}^2 + \norm{A|_{j\times j}A|_{j\times j}^T x}{\ell_2}^2
 \leq (1-\gamma^4/C^4)\norm{x}{\ell_2}^2
\end{align*}
and therefore
\begin{align}\label{eq:contraction}
\snorm{I_j-A|_{j\times j}A|_{j\times j}^T}{\infty}\leq \sqrt{1-\gamma^4/C^4}
\quad\text{for all }1\leq j\leq n.
\end{align}
This, the identity~\eqref{eq:Astarcol}, and the definition of the Frobenius norm imply
\begin{align}\label{eq:2red}
\begin{split}
 \snorm{A_\star^k}{2}^2 &= \sum_{j=1}^n \norm{(A_\star^k)_{1:j,j}}{\ell_2}^2
 \leq \max_{1\leq j\leq n}\snorm{I_j-A|_{j\times j}A|_{j\times j}^T}{\infty}^2 \sum_{j=1}^n\norm{ (A_\star^{k-1})_{1:j, j}}{\ell_2}^2\\
 &\leq (1-\gamma^4/C^4)\snorm{A_\star^{k-1}}{2}^2.
 \end{split}
\end{align}
Together with Lemma~\ref{lem:truncation} and~\eqref{eq:hoelder}, the identity~\eqref{eq:Astarit} implies
{\begin{align}\label{eq:4stab}
\begin{split}
 \snorm{A_\star^k}{4}&\leq 
 \snorm{A_\star^{k-1}}{4} +\snorm{\UU( A\UU(A^TA_\star^{k-1}))}{4}
 \leq 
  \snorm{A_\star^{k-1}}{4} +2\snorm{A\UU(A^TA_\star^{k-1})}{4}\\
  &\leq
   \snorm{A_\star^{k-1}}{4} +2\snorm{A}{\infty}\snorm{\UU(A^TA_\star^{k-1})}{4}
   \leq
     \snorm{A_\star^{k-1}}{4} +4\snorm{A}{\infty}\snorm{A^T}{\infty}\snorm{A_\star^{k-1}}{4}\\
     &\leq 
 (1+4\snorm{A}{\infty}^{2})\snorm{A_\star^{k-1}}{4}.
 \end{split}
\end{align}}
We choose $0<t<1$. The estimates~\eqref{eq:2red}--\eqref{eq:4stab} together with $\snorm{\cdot}{\infty}\leq \snorm{\cdot}{p}$ imply
\begin{align*}
 \snorm{A_\star^k}{\infty}=\snorm{A_\star^k}{\infty}^{(1-t)+t}\leq \snorm{A_\star^k}{2}^{1-t}\snorm{A_\star^k}{4}^t\leq
 \sqrt{1-\gamma^4/C^4}^{(1-t)(k-1)} (1+4\snorm{A}{\infty}^2)^{t(k-1)}\snorm{A_\star^1}{2}^{1-t}\snorm{A_\star^1}{4}^t.
\end{align*}
Since $\snorm{A}{\infty}^2\leq \gamma^2/C^2$, we find $0<t_0<1$ sufficiently small (depending only on $\gamma$ and $C$), such that
\begin{align*}
 q:=\sqrt{1-\gamma^4/C^4}^{1-t} (1+4\snorm{A}{\infty}^2)^{t}<1
\end{align*}
for all $0< t<t_0$.
This implies
\begin{align*}
  \snorm{A_\star^k}{\infty}\leq q^{k-1}\snorm{A_\star^1}{2}^{1-t}\snorm{A_\star^1}{4}^t.
\end{align*}
 Another application of~\eqref{eq:hoelder} on~\eqref{eq:Astarit} shows together with Lemma~\ref{lem:truncation} that $\snorm{A_\star^1}{2^j}\leq 1+2^{2j-2}\snorm{A}{\infty}\snorm{{A^T}}{2^{j}}\lesssim 1+\snorm{{A^T}}{2^{j}}$ for $j=1,2$. With the standard estimate {$\snorm{A^T}{p}\leq n^{1/p}\snorm{A^T}{\infty}=n^{1/p}\snorm{A}{\infty}$} (follows directly from the definition of Schatten norms), we conclude
 \begin{align*}
  \snorm{A_\star^k}{\infty}\lesssim q^{k-1}n^{(1-t)/2+t/4}\snorm{A}{\infty}.
 \end{align*}
Since $t>0$, this concludes the proof of~\eqref{eq:Astarconv} with $p=1/((1-t)/2+t/4)>2$.
 
This, together with~\eqref{eq:tri2} and the representation~\eqref{eq:Mrep} shows
\begin{align*}
 \snorm{U^{-1}}{\infty} \lesssim  \log(n)\snorm{M}{\infty}\Big(1+\sum_{m=1}^\infty \snorm{A_\star^m}{\infty}\Big)\lesssim \log(n)n^{1/p}\snorm{M}{\infty}\snorm{A}{\infty}\sum_{m=0}^\infty q^m.
\end{align*}
With $L=MU^{-1}$, we also obtain $\snorm{L}{\infty}\leq \snorm{M}{\infty}^2\log(n)n^{1/p}\snorm{A}{\infty}$. Replacing $p$ with $2<\widetilde p <p$, we absorb the logarithmic term and conclude the proof.
\end{proof}

\begin{theorem}\label{thm:LUbound}
Let $C,\gamma>0$.  Then, there exist constants $C_{\rm LU}>0$ and $p>2$ such that for $n\in\N$, all matrices $M\in\R^{n\times n}$ with $\snorm{M}{\infty}\leq C$ and $\max_{1\leq j\leq n}\snorm{(M|_{j\times j})^{-1}}{\infty}\leq 1/\gamma$ have an $LU$-factorization $M=LU$  with lower/upper-triangular $L,U\in\R^{n\times n}$ such that
\begin{align*}
 \snorm{L}{\infty}+\snorm{U}{\infty}+\snorm{L^{-1}}{\infty}+\snorm{U^{-1}}{\infty}\leq C_{\rm LU}n^{1/p}.
\end{align*}
\end{theorem}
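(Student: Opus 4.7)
The plan is to apply Proposition~\ref{prop:LUbound} both to $M$ and to its transpose $M^T$, since $\snorm{M^T}{\infty}=\snorm{M}{\infty}$ and $\snorm{((M^T)|_{j\times j})^{-1}}{\infty}=\snorm{(M|_{j\times j})^{-1}}{\infty}$ imply that $M^T$ satisfies the hypotheses with identical constants. This immediately supplies $\snorm{L}{\infty}+\snorm{U^{-1}}{\infty}\leq C_1 n^{1/p}$ from Proposition~\ref{prop:LUbound} applied to $M$, together with a normalized $LU$-factorization $M^T=\widetilde L\widetilde U$ satisfying $\snorm{\widetilde L}{\infty}+\snorm{\widetilde U^{-1}}{\infty}\leq C_1 n^{1/p}$. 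What is missing are analogous bounds on $\snorm{U}{\infty}$ and $\snorm{L^{-1}}{\infty}$.

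To obtain these I would link the two factorizations via uniqueness of the normalized $LU$-factorization. Transposing $M^T=\widetilde L\widetilde U$ yields $M=\widetilde U^T\widetilde L^T$, i.e., $M$ written as lower times upper triangular but with the ones now on the upper factor's diagonal. Setting $D:=\mathrm{diag}(\widetilde U)$ and writing $\widetilde U^T=(\widetilde U^T D^{-1})\,D$, the first factor is lower triangular with unit diagonal, so uniqueness of the normalized $LU$-factorization of $M$ forces $L=\widetilde U^T D^{-1}$ and $U=D\widetilde L^T$; in particular $D=\mathrm{diag}(U)$ is the matrix of pivots. These identities give
\begin{align*}
 \snorm{U}{\infty}\leq \snorm{D}{\infty}\,\snorm{\widetilde L}{\infty}\qquad\text{and}\qquad \snorm{L^{-1}}{\infty}=\snorm{D\,\widetilde U^{-T}}{\infty}\leq \snorm{D}{\infty}\,\snorm{\widetilde U^{-1}}{\infty},
\end{align*}
so the whole theorem reduces to a uniform bound on the pivot norm $\snorm{D}{\infty}=\max_{1\leq i\leq n}|U_{ii}|$.

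This last step is where the hypothesis on the principal minors is used in its full quantitative form. For $i\geq 2$ the Schur-complement formula for the $i$-th pivot gives
\begin{align*}
 U_{ii}=M_{ii}-M(i,1:i-1)\,(M|_{(i-1)\times(i-1)})^{-1}\,M(1:i-1,i),
\end{align*}
and $U_{11}=M_{11}$. Using $|M_{ii}|\leq \snorm{M}{\infty}\leq C$, bounding the partial row and column in $\ell_2$ by $\snorm{M}{\infty}\leq C$, and invoking $\snorm{(M|_{(i-1)\times(i-1)})^{-1}}{\infty}\leq 1/\gamma$, Cauchy--Schwarz yields $|U_{ii}|\leq C+C^2/\gamma=:K$, a constant independent of $i$ and $n$. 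Combining the three ingredients produces $\snorm{U}{\infty}+\snorm{L^{-1}}{\infty}\leq 2KC_1 n^{1/p}$, which together with the bound from Proposition~\ref{prop:LUbound} on $\snorm{L}{\infty}+\snorm{U^{-1}}{\infty}$ yields the theorem with $C_{\rm LU}:=(2K+1)C_1$ and the same $p>2$. I do not anticipate a real obstacle: the transpose trick is a textbook manipulation for the $LU$-factorization, and the $O(1)$-pivot bound is an immediate consequence of the Schur complement formula together with Cauchy--Schwarz applied to the hypotheses.
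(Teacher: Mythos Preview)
Your proposal is correct and follows essentially the same route as the paper: apply Proposition~\ref{prop:LUbound} to both $M$ and $M^T$, relate the two normalized $LU$-factorizations via the diagonal $D=\mathrm{diag}(U)$ (the paper writes this as $\widetilde U=DL^T$, $\widetilde L=U^TD^{-1}$, which is the same relation you derive), and then bound $\snorm{D}{\infty}$ by the Schur-complement formula for the pivots. The only cosmetic difference is that the paper estimates $\snorm{U}{\infty}$ through $U=L^{-1}M$ rather than $U=D\widetilde L^T$, but both give the same bound.
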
 
\begin{proof}
Proposition~\ref{prop:LUbound} shows the bound for $U^{-1}$ and $L$.
To prove the bound for $L^{-1}$ and $U$, we define the diagonal matrix $D={\rm diag}(U)\in\R^{n\times n}$ as well as the $LU$-factors $\widetilde L,\widetilde U$ of $M^T=\widetilde L\widetilde U$. Obviously, there holds $\widetilde U = DL^T$ and $\widetilde L=U^TD^{-1}$. Moreover, we may apply Proposition~\ref{prop:LUbound} to $M^T$ instead of $M$ in order to obtain
 \begin{align*}
 \snorm{\widetilde L}{\infty}+\snorm{\widetilde U^{-1}}{\infty}\leq C_{\rm LU} n^{1/p}.
\end{align*}
Altogether, this shows
\begin{align*}
 \snorm{L^{-1}}{\infty}= \snorm{D\widetilde U^{-T}}{\infty}\leq C_{\rm LU}n^{1/p}\snorm{D}{\infty}
\end{align*}
as well as $\snorm{U}{\infty}\leq \snorm{L^{-1}}{\infty}\snorm{M}{\infty}\leq CC_{\rm LU} n^{1/p}\snorm{D}{\infty}$.

It remains to bound $D$. To that end, define $R_1:=M_{1:j-1,j}$, $R_2:= (M_{j,1:j-1})^T$ and $R_3:=M_{jj}$ to write $M|_{j\times j}$ as a $(2\times 2)$-block-matrix and compute the $(2\times 2)$-block-$LU$-factorization as
	\begin{align}\label{eq:2x2}
	 \begin{split}
	 M|_{j\times j}&=\begin{pmatrix}
	 M|_{(j-1)\times (j-1)} & R_1\\ R_2^T & R_3
	 \end{pmatrix}\\
 &	 =
	 \begin{pmatrix}
	 I_{j-1} & 0\\ R_2^TM|_{(j-1)\times (j-1)}^{-1} & 1
	 \end{pmatrix}
	 \begin{pmatrix}
	 M|_{(j-1)\times (j-1)} & R_1\\0 & R_3-R_2^TM|_{(j-1)\times (j-1)}^{-1}R_1
	 \end{pmatrix}.
	 \end{split}
	\end{align}
	Uniqueness of the normalized $LU$-factorization (further factorization of $M|_{(j-1)\times (j-1)}$ will not alter the lower-right entry of the $U$-factor) implies $U_{jj}=D_{jj}= R_3-R_2^TM|_{(j-1)\times (j-1)}^{-1}R_1$ and hence
	$|D_{jj}|\leq \snorm{M}{\infty}+\snorm{M}{\infty}^2/\gamma\leq C+C^2/\gamma$ , where we used that the norm of the sub matrices $R_1,R_2,R_3$
	is bounded by the norm of the matrix $M$ as well as $\snorm{M|_{(j-1)\times (j-1)}^{-1}}{\infty}\leq 1/\gamma$. This concludes the proof.
\end{proof}

It is an interesting question whether the result of Theorem~\ref{thm:LUbound} is sharp. Since this is not important for our further investigations, we only give an example that shows that the $LU$ factorization can be unbounded under the assumptions of Theorem~\ref{thm:LUbound}. To that end, consider the modified Hilbert matrix $M\in\R^{n\times n}$ defined by
\begin{align}\label{eq:hmatrix}
 M= {\begin{pmatrix}
     1 & 1/2 & 1/3 &\ldots&\ldots & 1/n\\
     -1/2 & 1 & 1/2 &\ldots&\ldots &1/(n-1)\\
     \vdots & &   && &\vdots\\
     \vdots & &   & &&\vdots\\
     -1/n & \ldots &\ldots & -1/3 & -1/2 & 1
    \end{pmatrix}}.
\end{align}
Obviously, there holds $(M-I)^T=-(M-I)$. This shows for all $1\leq j\leq n$ that
\begin{align*}
 M|_{j\times j} x\cdot x = \norm{x}{\ell_2}^2\quad\text{for all }x\in\R^j.
\end{align*}
Hence, $\snorm{M|_{j\times j}^{-1}}{\infty}\leq 1$ for all $1\leq j\leq n$ and it is well-known (see, e.g.,~\cite{LU}) that $\snorm{M}{\infty}\leq C$ holds uniformly in $n\in\N$.
This shows that $M$ satisfies the assumptions of Theorem~\ref{thm:LUbound}.
Moreover, with $M_{\rm abs}\in\R^{n\times n}$ defined by $(M_{\rm abs})_{ij}=|M_{ij}|$, it is also known that $\snorm{M_{\rm abs}}{\infty}\to \infty$ as $n\to\infty$.
Straightforward calculation of the $LU$-factorization of $M=LU$ shows that $\LL(L)\leq 0$ and $U\geq 0$ (entry-wise). This implies $U_{\rm abs}=U$ and $\snorm{L}{\infty}\geq \snorm{\LL(L_{\rm abs})}{\infty}-1\geq \snorm{L_{\rm abs}}{\infty}-2$. Hence, there holds
\begin{align*}
 (\snorm{L}{\infty}+2)\snorm{U}{\infty} \geq \snorm{L_{\rm abs}}{\infty}\snorm{U_{\rm abs}}{\infty}\geq 
 \snorm{L_{\rm abs}U_{\rm abs}}{\infty}\geq \snorm{M_{\rm abs}}{\infty}\to \infty\quad\text{ as }n\to \infty.
\end{align*}
Thus, at least one of the factors $L$ or $U$ must be unbounded in $\snorm{\cdot}{\infty}$ as $n\to\infty$. Numerical experiments suggest that the $L$-factor remains bounded (probably due to the normalization $L_{ii}=1$, $1\leq i\leq n$) but the $U$-factor diverges with $\snorm{U}{\infty}\gtrsim n^{0.35}$, see Figure~\ref{fig:lunorm}.

\begin{figure}
\psfrag{lfactor}{\tiny $\snorm{L}{\infty}/\snorm{M}{\infty}$}
\psfrag{ufactor}{\tiny $\snorm{U}{\infty}/\snorm{M}{\infty}$}
\psfrag{n}{\tiny $n$}
 \includegraphics[width=0.5\textwidth]{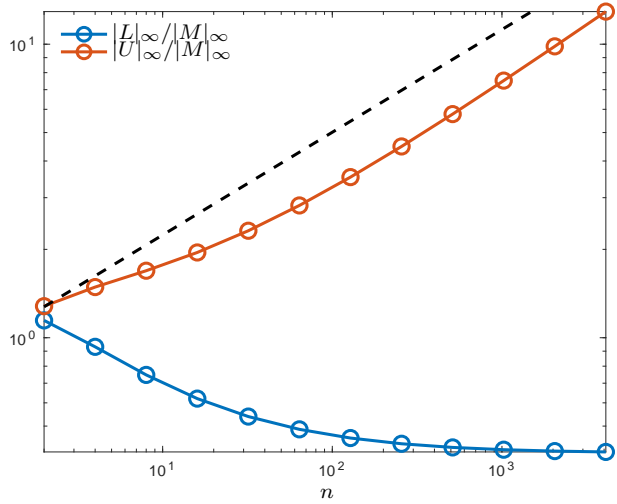}
 \caption{$LU$-factorization of the matrix $M$ from~\eqref{eq:hmatrix}. The dashed line represents $\mathcal{O}(n^{0.35})$.} \label{fig:lunorm}
\end{figure}

\subsection{Extension to the block-$LU$-factorization}\label{sec:extension}
\def\DD{\mathcal{D}}

We cannot directly use the result of Theorem~\ref{thm:LUbound} for the matrix $M$ of Definition~\ref{def:basis} since it would only give an estimate of the form $\snorm{U}{\infty}\lesssim {\rm dim}(\XX_{\ell+N+1})^{1/2-\delta}$. Since ${\rm dim}(\XX_{\ell+N+1})\gg N$ in general, this cannot be used in Lemma~\ref{lem:luqoN} to obtain general quasi-orthogonality with $C(N)=o(N)$. However, Lemma~\ref{lem:luqoN} only requires a bound on the block-$LU$-factorization of $M$ and we show below that this can be bounded by $N^{1/2-\delta}$.

We closely follow the proof of the previous section. However, the Schatten norms have no obvious generalization for block-matrices and we have to come up with custom norms. The main goal is to design norms which have all the properties used in the proofs of the previous section and are bounded by $m^{1/p}\snorm{M}{\infty}$ for block-matrices with $m\in\N$ blocks.

We assume we have a block-structure $n_0<n_1<n_2<\ldots<n_m=n$ (later we identify $m=N+2$ in order to establish the link to Section~\ref{sec:lu}). The matrix $M\in\R^{n\times n}$ in this section will always be a block-matrix with block-structure $n_0,\ldots,n_m$ (see~\eqref{eq:blockdef} for the definition of block-matrices).
We define the matrix space $\DD_{\rm b}\subset \R^{n\times m}$ by
\begin{align*}
 X\in\DD_{\rm b}\quad\Longleftrightarrow\quad X_{ij}=0\text{ for all } i\notin [n_{j-1}+1,n_j].
\end{align*}
This means that  the matrices $X\in\DD_{\rm b}$ are of the form
\begin{align*}
 X=\begin{pmatrix}
    | &  & & \boldsymbol{0}\\
     & | &  & \\
     & & \ddots &  \\
    \boldsymbol{0} & &  &|
   \end{pmatrix}\in \R^{n\times m},
\end{align*}
where $|$ symbolizes a column vector of length $n_{j-1}-n_j$ in the $j$-th column of $X$.
With this, we define the norms
\begin{align*}
 \bsnorm{A}{p}&:=\sup_{X\in\DD_{\rm b}\atop \snorm{X}{\infty}\leq 1}\snorm{AX}{p}
 \quad{\text{for all }A\in\R^{n\times n}\text{ and all }p \in\N\cup\{\infty\}.}
 \end{align*}
Note that $\snorm{X}{\infty}\leq 1$ for $X\in\DD_{\rm b}$ is equivalent to the fact that each column's $\norm{\cdot}{\ell_2}$-norm is bounded by one. {Moreover, $\bsnorm{\cdot}{p}$ is the operator norm for $A\colon (\DD_{\rm b},\snorm{\cdot}{\infty})\to (\R^{n\times m},\snorm{\cdot}{p})$ and thus indeed a norm.}
\begin{lemma}\label{lem:bsnorm}
 There holds for $p\in \N$ and $A,B\in\R^{n\times n}$ that
 \begin{itemize}
  \item[(i)]$\bsnorm{AB}{p}\leq \snorm{A}{\infty}\bsnorm{B}{p}$,
  \item[(ii)] $ \snorm{A}{\infty}\leq \bsnorm{A}{p}$ {and $\bsnorm{A}{\infty}=\snorm{A}{\infty}$},
  \item[(iii)]$\bsnorm{A}{p}\leq m^{1/p}\snorm{A}{\infty}$,
  \item[(iv)] Let $C\in \R^{n\times n}$ be block-upper-triangular with $C(0:j,j) = B_jA(0:j,j)$ for $B_j\in\R^{n_{j+1}\times n_{j+1}}$ and $j=0,\ldots,m-1$. Then, there holds
  \begin{align*}
   \bsnorm{C}{2}\leq \big(\max_{j=0,\ldots,m-1} \snorm{B_j}{\infty} \big)\bsnorm{A}{2}.
  \end{align*}

 \end{itemize} 
\end{lemma}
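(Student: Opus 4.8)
The plan is to verify the four claims directly from the definition $\bsnorm{A}{p}=\sup\{\snorm{AX}{p}:X\in\DD_{\rm b},\ \snorm{X}{\infty}\le1\}$, using only two elementary facts about Schatten norms: the H\"older inequality~\eqref{eq:hoelder}, in particular $\snorm{A(BX)}{p}\le\snorm{A}{\infty}\snorm{BX}{p}$ (equivalently $\sigma_m(AB)\le\sigma_1(A)\sigma_m(B)$), and the ordering $\snorm{\cdot}{\infty}\le\snorm{\cdot}{p}$ together with $\snorm{\cdot}{\infty}=\norm{\cdot}{2}$ and $\snorm{\cdot}{2}=\norm{\cdot}{F}$. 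Items (i) and (iii) then follow at once. For (i), every admissible $X$ satisfies $\snorm{(AB)X}{p}=\snorm{A(BX)}{p}\le\snorm{A}{\infty}\snorm{BX}{p}\le\snorm{A}{\infty}\bsnorm{B}{p}$, and one takes the supremum over $X$. For (iii), since $AX\in\R^{n\times m}$ has at most $m$ nonzero singular values, $\snorm{AX}{p}\le m^{1/p}\snorm{AX}{\infty}=m^{1/p}\norm{AX}{2}\le m^{1/p}\snorm{A}{\infty}\snorm{X}{\infty}\le m^{1/p}\snorm{A}{\infty}$, and again one takes the supremum.

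For (ii), the easy half $\bsnorm{A}{\infty}\le\snorm{A}{\infty}$ is $\snorm{AX}{\infty}=\norm{AX}{2}\le\norm{A}{2}\,\norm{X}{2}=\snorm{A}{\infty}\snorm{X}{\infty}$. The reverse inequality $\snorm{A}{\infty}\le\bsnorm{A}{p}$ is the only point that needs a construction. I would take a unit right singular vector $v\in\R^n$ of $A$ for the largest singular value, so $Av=\snorm{A}{\infty}\,u$ with $u$ a unit left singular vector, and split $v=\sum_{k=1}^m v_{(k)}$ into its block pieces, where $v_{(k)}$ is the restriction of $v$ to the coordinate block $[n_{k-1}+1,n_k]$ (so $\sum_k\norm{v_{(k)}}{\ell_2}^2=1$). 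Then let $X\in\DD_{\rm b}$ have $k$-th column $v_{(k)}/\norm{v_{(k)}}{\ell_2}$ (and $0$ if $v_{(k)}=0$); its columns are orthonormal with pairwise disjoint supports, hence $\snorm{X}{\infty}=1$. With the unit vector $w=(\norm{v_{(1)}}{\ell_2},\dots,\norm{v_{(m)}}{\ell_2})\in\R^m$ one computes $AXw=\sum_k Av_{(k)}=Av=\snorm{A}{\infty}\,u$, so $\norm{AX}{2}\ge\norm{AXw}{\ell_2}/\norm{w}{\ell_2}=\snorm{A}{\infty}$, and therefore $\bsnorm{A}{p}\ge\snorm{AX}{p}\ge\snorm{AX}{\infty}=\norm{AX}{2}\ge\snorm{A}{\infty}$. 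Taking $p=\infty$ and combining with the easy half yields $\bsnorm{A}{\infty}=\snorm{A}{\infty}$.

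For (iv), fix an admissible $X\in\DD_{\rm b}$ with columns $x^{(1)},\dots,x^{(m)}$, where $x^{(k)}$ is supported on the coordinate block $[n_{k-1}+1,n_k]$. Because $C$ is block-upper-triangular with $C(0:k-1,k-1)=B_{k-1}A(0:k-1,k-1)$, the column $Cx^{(k)}$ is supported on the first $n_k$ coordinates and there equals $B_{k-1}$ applied to the first $n_k$ coordinates of $Ax^{(k)}$; hence $\norm{Cx^{(k)}}{\ell_2}\le\snorm{B_{k-1}}{\infty}\,\norm{Ax^{(k)}}{\ell_2}$. Squaring, summing over $k$, and using $\snorm{CX}{2}^2=\norm{CX}{F}^2=\sum_k\norm{Cx^{(k)}}{\ell_2}^2$ (and likewise for $AX$) gives $\snorm{CX}{2}\le(\max_{j}\snorm{B_j}{\infty})\,\snorm{AX}{2}\le(\max_j\snorm{B_j}{\infty})\,\bsnorm{A}{2}$; the supremum over $X$ finishes the proof.

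I expect the lower bound in (ii) to be the only genuine obstacle: naive choices of $X$ (a single nonzero column, or the pieces $v_{(k)}$ without renormalisation) lose a factor of order $m^{1/2}$, and the decisive idea is to renormalise each block piece of $v$ to a unit column of $X$ and then reconstruct $Av$ exactly by testing against the vector of block $\ell_2$-norms. Everything else is careful bookkeeping with the definition of $\DD_{\rm b}$ and the one-step offset between the $1$-indexed columns of $X$ and the $0$-indexed blocks appearing in (iv).
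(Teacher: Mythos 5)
Your proof is correct and follows essentially the same route as the paper: (i) and (iii) are immediate from Hölder and the $\min\{m,n\}$-singular-value count, (iv) is the same column-by-column computation using the block-diagonal structure of $X$ and the block-upper-triangular structure of $C$, and for the lower bound in (ii) you construct exactly the paper's test matrix $X\in\DD_{\rm b}$ (block-wise renormalised pieces of a maximising vector) together with the vector $y=w$ of block $\ell_2$-norms.
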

\begin{proof}
Sub-multiplicativity~(i) is clear as $\snorm{\cdot}{p}$ satisfies the H\"older inequality~\eqref{eq:hoelder}. 
For~(ii), we note that for {$p\in\N\cup\{\infty\}$, there holds} $\bsnorm{A}{p}\geq \sup_{X\in\DD_{\rm b}\atop \snorm{X}{\infty}\leq 1}\snorm{AX}{\infty}$. Let $x\in\R^n$ with $\norm{x}{\ell_2}=1$ such that $\norm{Ax}{\ell_2}=\snorm{A}{\infty}$. Construct $X\in\DD_{\rm b}$ with $\snorm{X}{\infty}=1$ such that $x=Xy$ for 
\begin{align*}
y=(\norm{x_{1:n_1}}{\ell_2},\ldots,\norm{x_{n_{m-1}+1:n_m}}{\ell_2})\in\R^m.
\end{align*}
Note that $\norm{y}{\ell_2}=\norm{x}{\ell_2}=1$ and hence
\begin{align*}
 \bsnorm{A}{p} \geq \snorm{AX}{\infty} \geq \norm{AXy}{\ell_2}=\snorm{A}{\infty}.
\end{align*}
{For $p=\infty$, the definition of $\bsnorm{\cdot}{\infty}$ shows $\bsnorm{A}{\infty}\leq \snorm{A}{\infty}$ and hence $\bsnorm{A}{\infty}= \snorm{A}{\infty}$.}

Property~(iii) follows from the norm equivalence $\snorm{\cdot}{p}\leq \min\{m,n\}^{1/p}\snorm{\cdot}{\infty}$ in $\R^{n\times m}$, i.e.,
\begin{align*}
 \bsnorm{A}{p}\leq \min\{m,n\}^{1/p}\sup_{X\in\DD_{\rm b}\atop \snorm{X}{\infty}\leq 1}\snorm{AX}{\infty}\leq m^{1/p}\snorm{A}{\infty}.
\end{align*}

To see~(iv), we note that by definition of the Frobenius norm and since $X\in\DD_{\rm b}$ is block-diagonal, there holds
\begin{align*}
 \bsnorm{C}{2}^2 &=\sup_{X\in\DD_{\rm b}\atop \snorm{X}{\infty}\leq 1}\sum_{i=0}^{m-1} \norm{(CX)_{:,i+1}}{\ell_2}^2= \sup_{X\in\DD_{\rm b}\atop \snorm{X}{\infty}\leq 1}\sum_{i=0}^{m-1} \norm{C(0:i,i)X_{n_i+1:n_{i+1},i+1}}{\ell_2}^2\\
 &= \sup_{X\in\DD_{\rm b}\atop \snorm{X}{\infty}\leq 1}\sum_{i=0}^{m-1} \norm{B_iA(0:i,i)X_{n_i+1:n_{i+1},i+1}}{\ell_2}^2\\
 &\leq \big(\max_{j=0,\ldots,m-1} \snorm{B_j}{\infty} \big)^2 \sup_{X\in\DD_{\rm b}\atop \snorm{X}{\infty}\leq 1}\sum_{i=0}^{m-1} \norm{A(0:i,i)X_{n_i+1:n_{i+1},i+1}}{\ell_2}^2\\
 &\leq  \big(\max_{j=0,\ldots,m-1} \snorm{B_j}{\infty} \big)^2
 \bsnorm{A}{2}^2,
\end{align*}
where we used the block-upper-triangular structure of $C$ in the second equality. This concludes the proof.
\end{proof}

In order to mimic the proof of the previous section for block-matrices, we require a bound on the norm of the block-triangular-truncation operator $\UU_{\rm b}$ defined by
\begin{align*}
 \UU_{\rm b}(M)(i,j):=\begin{cases} M(i,j) &i\leq j,\\ 0 &i>j.\end{cases}
\end{align*}

\begin{lemma}\label{lem:blocktruncation}
  There holds for ${q}\in\{1,2\}$ that
  \begin{align*}
   \bsnorm{\UU_{\rm b}(M)}{2^q}\leq 2^{q-1}\bsnorm{M}{2^q}
  \end{align*}
for all $M\in\R^{n\times n}$.
 \end{lemma}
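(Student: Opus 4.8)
The plan is to imitate exactly the proof of Lemma~\ref{lem:truncation}, replacing the Schatten norm $\snorm{\cdot}{p}$ by the block-norm $\bsnorm{\cdot}{p}$ and the entrywise triangular truncation $\UU$ by the block-triangular truncation $\UU_{\rm b}$, and to exploit the fact that Lemma~\ref{lem:bsnorm} already provides all the structural properties that were used there. First I would record the decomposition identity for $\UU_{\rm b}$: for any $M$, writing $\LL_{\rm b}(M):=M-\UU_{\rm b}(M)$ for the strict block-lower part, the product $\UU_{\rm b}(M)^T\UU_{\rm b}(M)$ is block-upper-triangular plus block-lower-triangular with vanishing block-diagonal contributions coming from the "wrong" side, so that
\begin{align*}
 \UU_{\rm b}(M)^T\UU_{\rm b}(M) = \UU_{\rm b}\big(\UU_{\rm b}(M)^T M\big) + \LL_{\rm b}\big(M^T \UU_{\rm b}(M)\big).
\end{align*}
This is the block analogue of the identity used in Lemma~\ref{lem:truncation}; it holds because $\UU_{\rm b}(M)^T$ is block-lower-triangular with the same block-diagonal as $\UU_{\rm b}(M)$, and multiplying a block-lower by a block-upper matrix and then taking $\UU_{\rm b}$ picks out exactly $\UU_{\rm b}(M)^T M$ on the upper side, symmetrically on the lower side.

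Next I would run the norm estimate. Using $A^TA$-trick $\bsnorm{\UU_{\rm b}(M)}{2p}^2 = \snorm{\UU_{\rm b}(M)^T\UU_{\rm b}(M)}{p}$ — wait, that is not literally true for $\bsnorm{\cdot}{}$, so instead I would argue directly with the cases $p\in\{1,2\}$. For $p=1$ (i.e.\ $2^q=2$, $q=1$), note $\bsnorm{\cdot}{2}$ restricted to block-diagonal test matrices $X$ is computed columnwise as in the proof of Lemma~\ref{lem:bsnorm}(iv), and $\UU_{\rm b}(MX)$ for block-diagonal $X$ satisfies $\UU_{\rm b}(MX)=\UU_{\rm b}(M)X$, so actually $\bsnorm{\UU_{\rm b}(M)}{2}\le$ can be read off from a columnwise Pythagoras argument: for each block column the truncated part has $\ell_2$-norm bounded by the full block column's $\ell_2$-norm, giving the trivial bound $\bsnorm{\UU_{\rm b}(M)}{2}\le\bsnorm{M}{2}$, which is the claim with constant $2^{1-1}=1$. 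For $p=2$ (i.e.\ $2^q=4$, $q=2$) I would feed the identity above into $\bsnorm{\cdot}{4}$, bound the two summands using the block-H\"older inequality — here the key point is that $\snorm{\cdot}{p}$ satisfies the ordinary H\"older inequality~\eqref{eq:hoelder} and $\bsnorm{\cdot}{p}$ inherits submultiplicativity against $\snorm{\cdot}{\infty}$ from Lemma~\ref{lem:bsnorm}(i)~--(ii) — to get
\begin{align*}
 \bsnorm{\UU_{\rm b}(M)}{4}^2 \le C_{\rm b}\big(\bsnorm{\UU_{\rm b}(M)}{4}\bsnorm{M}{4} + \bsnorm{M}{4}\bsnorm{\UU_{\rm b}(M)}{4}\big),
\end{align*}
where $C_{\rm b}$ is the larger of the $\bsnorm{\cdot}{2}$-operator norms of $\UU_{\rm b}$ and $\LL_{\rm b}$, which by the $p=2$ case equals $1$. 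Dividing by $\bsnorm{\UU_{\rm b}(M)}{4}$ yields $\bsnorm{\UU_{\rm b}(M)}{4}\le 2\bsnorm{M}{4}=2^{2-1}\bsnorm{M}{4}$, as claimed.

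The one genuinely delicate point — the main obstacle — is verifying the algebraic decomposition identity for $\UU_{\rm b}$ and, relatedly, that $\bsnorm{\cdot}{2}$ interacts with $\UU_{\rm b}$ and $\LL_{\rm b}$ through the columnwise identity $\UU_{\rm b}(M)X=\UU_{\rm b}(MX)$ for block-diagonal $X\in\DD_{\rm b}$; once these are in place the rest is a verbatim transcription of Lemma~\ref{lem:truncation}. I would also need the block analogue of the $A^TA$ identity: that for any $A$ and any $p$, $\bsnorm{A}{2p}^2\le \sup_{X}\snorm{X^TA^TAX}{p}$ with the supremum over block-diagonal $X$ of $\snorm{\cdot}{\infty}$-norm one — this follows since $\snorm{AX}{2p}^2=\snorm{(AX)^T(AX)}{p}$ and $(AX)^T(AX)=X^T(A^TA)X$. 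Combining this with the decomposition identity applied to $A^TA$-type products and the submultiplicativity of $\bsnorm{\cdot}{p}$ against $\snorm{\cdot}{\infty}$ then closes the induction from $q=1$ to $q=2$ exactly as before; since the statement only requires $q\in\{1,2\}$ no genuine induction is even needed, just the two base cases.
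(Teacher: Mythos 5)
Your $n\times n$ block decomposition
\begin{align*}
 \UU_{\rm b}(M)^T\UU_{\rm b}(M) = \UU_{\rm b}\big(\UU_{\rm b}(M)^T M\big) + \LL_{\rm b}\big(M^T \UU_{\rm b}(M)\big)
\end{align*}
is correct, and so is your treatment of $q=1$ (modulo the notational abuse that $\UU_{\rm b}$ is only defined on square block-matrices, so ``$\UU_{\rm b}(MX)$'' for $MX\in\R^{n\times m}$ should really be the paper's semi-block truncation $\UU_{sb}$). The genuine gap is in the $q=2$ step. After writing
$\bsnorm{\UU_{\rm b}(M)}{4}^2=\sup_X\snorm{X^T\UU_{\rm b}(M)^T\UU_{\rm b}(M)X}{2}$ and inserting the decomposition, you claim that ``submultiplicativity of $\bsnorm{\cdot}{p}$ against $\snorm{\cdot}{\infty}$'' (Lemma~\ref{lem:bsnorm}(i)) together with H\"older yields the displayed inequality with $C_{\rm b}=1$. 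That route does not produce the stated bound: Lemma~\ref{lem:bsnorm}(i) only gives $\bsnorm{AB}{p}\le\snorm{A}{\infty}\bsnorm{B}{p}$, and applied to $\UU_{\rm b}(M)^T M$ it would leave you with a factor $\snorm{\UU_{\rm b}(M)^T}{\infty}$ — exactly the quantity we have no control over, since unboundedness of the block-triangular truncation is the whole point. The supremum over $X$ ``captures'' only the rightmost factor of the product, so the H\"older splitting $\snorm{\cdot}{2}\le\snorm{\cdot}{4}\snorm{\cdot}{4}$ cannot be performed inside the supremum without an additional algebraic identity.

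The missing (and decisive) observation — which the paper states and proves carefully — is that conjugation by a block-diagonal $X\in\DD_{\rm b}$ transports the block-truncation at the $n\times n$ level down to the ordinary entrywise truncation at the $m\times m$ level: one has $\UU_{\rm b}(A)X=\UU_{sb}(AX)$ and hence $X^T\UU_{\rm b}(A)X=\UU(X^T A X)$ on $\R^{m\times m}$ for every $A$ and every $X\in\DD_{\rm b}$ (and symmetrically for $\LL_{\rm b}$). With this in hand your argument closes: sandwiching your $n\times n$ identity by $X^T\cdot X$ and using $X^T\UU_{\rm b}(\UU_{\rm b}(M)^TM)X = \UU\big((\UU_{\rm b}(M)X)^T MX\big)$, you can invoke the $m\times m$ Lemma~\ref{lem:truncation} to drop the truncation and then ordinary H\"older in $\R^{m\times m}$ to get $\snorm{(\UU_{\rm b}(M)X)^T MX}{2}\le\snorm{\UU_{\rm b}(M)X}{4}\snorm{MX}{4}$, after which taking $\sup_X$ gives $\bsnorm{\UU_{\rm b}(M)}{4}\bsnorm{M}{4}$ as desired. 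The paper streamlines this by never forming your $n\times n$ identity at all; it works directly at the $m\times m$ level with $\UU_{sb}$, proving $\UU_{sb}(A)^T\UU_{sb}(A)=\UU(\UU_{sb}(A)^T A)+\LL(A^T\UU_{sb}(A))$ for $A=MX\in\R^{n\times m}$. Both routes lead to the same final computation, but yours as written omits the commutation identity that makes the H\"older step legitimate.
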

\begin{proof}
We define the semi-block-truncation operator $\UU_{sb}$ for $A\in\R^{n\times m}$ by
\begin{align*}
 \UU_{sb}(A)_{ij}:=\begin{cases} A_{ij} & i\leq n_{j},\\
                    0 & i> n_j.
                   \end{cases}
\end{align*}
This is just the block-truncation if we assume the block-structure only for the rows and block-columns of size one.
We have for $n_{k-1}<i\leq  n_{k}$, $k=1,\ldots,m$ and $j=1,\ldots,m$ that
 \begin{align*}
  (\UU_{\rm b}(A)X)_{ij} &=  \sum_{t=1}^{n} \UU_{\rm b}(A)_{it}X_{tj}=\sum_{t=n_{k-1}+1}^{n} A_{it}X_{tj}
  \end{align*}
  Since for $X\in\DD_{\rm b}$, we have  $X_{tj} = 0$ for $t\notin [n_{j-1}+1, n_j]$, this implies
  \begin{align*}
 (\UU_{\rm b}(A)X)_{ij} ={\left\{\begin{array}{ll}
  (AX)_{ij}, & k\leq j\\
  0, &k>j
  \end{array} \right\}}
  = \UU_{sb}(AX)_{ij}.
 \end{align*}
With this, the assertion follows immediately {for $A=M$ and} $q=1$ as
\begin{align*}
 \snorm{\UU_{\rm b}(M)X}{2}=\snorm{\UU_{sb}(MX)}{2} = \sqrt{\sum_{j=1}^{m} \sum_{i=1}^{n_{j}}(MX)_{ij}^2}\leq \snorm{MX}{2}.
\end{align*}
The case $q=2$ follows similarly to that of Lemma~\ref{lem:truncation}. 
There holds for $1\leq i\leq j\leq m$
 \begin{align*}
  \big(\UU_{sb}(A)^T \UU_{sb}(A))_{ij} &=  \sum_{t=1}^{n_i} (\UU_{sb}(A)^T)_{it} \UU_{sb}(A)_{tj}
  =\sum_{t=1}^{n_i} (\UU_{sb}(A)^T)_{it} A_{tj}\\
  &=\sum_{t=1}^{n} (\UU_{sb}(A)^T)_{it} A_{tj}
  =(\UU_{sb}(A)^T A)_{ij}.
 \end{align*}
 Similarly, we get for $j< i$ that $\big(\UU_{sb}(A)^T \UU_{sb}(A))_{ij} =(A^T \UU_{sb}(A))_{ij}$ and therefore
 \begin{align*}
 \UU_{sb}(A)^T \UU_{sb}(A)= \UU(\UU_{sb}(A)^T A)+\LL( (A)^T\UU_{sb}(A)).
 \end{align*}
Hence, we have with $MX\in\R^{n\times m}$ that
\begin{align*}
 \bsnorm{\UU_{\rm b}(M)}{4}^2 &=\sup_{X\in\DD_{\rm b}\atop \snorm{X}{\infty}\leq 1}\snorm{\UU_{\rm b}(M)X}{4}^2\\
 &= \sup_{X\in\DD_{\rm b}\atop \snorm{X}{\infty}\leq 1}\snorm{X^T\UU_{\rm b}(M)^T\UU_{\rm b}(M)X}{2} =
 \sup_{X\in\DD_{\rm b}\atop \snorm{X}{\infty}\leq 1}\snorm{\UU_{sb}(MX)^T\UU_{sb}(MX)}{2} \\
 & \leq \sup_{X\in\DD_{\rm b}\atop \snorm{X}{\infty}\leq 1}\Big(\snorm{\UU(\UU_{sb}(MX)^T MX)}{2}+\snorm{\LL( (MX)^T\UU_{sb}(MX))}{2}\Big).
\end{align*}
The above, Lemma~\ref{lem:truncation}, and~\eqref{eq:hoelder} imply
\begin{align*}
 \bsnorm{\UU_{\rm b}(M)}{4}^2&\leq  \sup_{X\in\DD_{\rm b}\atop \snorm{X}{\infty}\leq 1}\big(\snorm{\UU_{sb}(MX)^T MX}{2}+\snorm{(MX)^T\UU_{sb}(MX)}{2}\big)\leq
 2\sup_{X\in\DD_{\rm b}\atop \snorm{X}{\infty}\leq 1}\snorm{\UU_{sb}(MX)}{4}\snorm{MX}{4}\\
 &=
 2\sup_{X\in\DD_{\rm b}\atop \snorm{X}{\infty}\leq 1}\snorm{\UU_{{\rm b}}(M)X}{4}\snorm{MX}{4}\leq 2\bsnorm{\UU_{\rm b}(M)}{4}\bsnorm{M}{4}.
\end{align*}
This concludes the proof.
\end{proof}
The proof technique of the following result is well-known (see, e.g.,~\cite{log2}) and we provide the straightforward extension to block-matrices.
\begin{lemma}\label{lem:blocktruncinfty}
 There holds
 \begin{align*}
  \snorm{\UU_{\rm b}(M)}{\infty}\leq (\lceil\log_2(m)\rceil+1)\snorm{M}{\infty}.
 \end{align*}
\end{lemma}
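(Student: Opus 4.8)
The plan is to reduce the block-triangular truncation bound to the scalar (entrywise) case by a dyadic splitting argument, exactly the standard trick used to prove $\snorm{\UU(M)}{\infty}\lesssim \log(n)\snorm{M}{\infty}$. First I would write $m\leq 2^p$ with $p=\lceil\log_2(m)\rceil$, so that it suffices to treat the case $m=2^p$ (padding the block-structure with empty blocks does not change $\UU_{\rm b}(M)$ or $\snorm{M}{\infty}$). I would then decompose $\UU_{\rm b}(M)$ as a telescoping sum over dyadic scales: at scale $0$ one removes the block-diagonal part $D_0 := \sum_{i} M(i,i)$ (restricted to the block indices), and at each subsequent scale $s=1,\dots,p$ one isolates the blocks $(i,j)$ with $i<j$ whose indices first differ in the $s$-th binary digit. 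Concretely, partition the strictly block-upper-triangular part into $p$ pieces $N_1,\dots,N_p$ where $N_s$ collects, for each of the $2^{p-s}$ dyadic intervals of length $2^s$ at level $s$, the off-diagonal coupling between its left and right halves, so that $\UU_{\rm b}(M) = D_0 + \sum_{s=1}^{p} N_s$.

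The key point is that each $N_s$ is block-diagonal with respect to a coarser partition, and within each such coarse block $N_s$ is a single off-diagonal rectangular sub-block of $M$; hence $\snorm{N_s}{\infty}\leq \snorm{M}{\infty}$ because the spectral norm of a block-diagonal matrix is the max of the spectral norms of its diagonal blocks, and each diagonal block is a sub-block of $M$ (so its spectral norm is at most $\snorm{M}{\infty}$). Likewise $\snorm{D_0}{\infty}\leq\snorm{M}{\infty}$ since $D_0$ is block-diagonal with diagonal blocks $M(i,i)$. The triangle inequality then gives
\begin{align*}
 \snorm{\UU_{\rm b}(M)}{\infty} \leq \snorm{D_0}{\infty}+\sum_{s=1}^{p}\snorm{N_s}{\infty}\leq (p+1)\snorm{M}{\infty} = (\lceil\log_2(m)\rceil+1)\snorm{M}{\infty},
\end{align*}
which is exactly the claimed bound.

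I expect the only genuinely delicate point to be bookkeeping: verifying that the dyadic pieces $N_s$ really do partition the strictly block-upper-triangular entries of $M$ (every pair $i<j$ is caught at exactly one scale, namely the scale at which the dyadic intervals containing $i$ and $j$ first split apart), and that each $N_s$ is block-diagonal in the coarse grouping so that its spectral norm is the maximum over the coarse blocks. This is the standard observation that if $i<j$ then the binary expansions of $i$ and $j$ agree on a (possibly empty) prefix and then differ, and the first differing bit determines the scale. Once the combinatorial structure is set up, the norm estimate is immediate from the block-diagonal spectral-norm identity and the fact that any sub-block of $M$ has spectral norm at most $\snorm{M}{\infty}$. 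No new analytic input beyond these elementary facts is needed; the argument is purely the block analogue of the classical triangular truncation estimate~\eqref{eq:tri2}.
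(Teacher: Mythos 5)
Your proof is correct and uses essentially the same idea as the paper: decompose $\UU_{\rm b}(M)$ into $\lceil\log_2(m)\rceil+1$ pieces, each block-diagonal at some dyadic scale with diagonal blocks that are sub-blocks of $M$ (hence of spectral norm at most $\snorm{M}{\infty}$), and apply the triangle inequality. The paper organizes this decomposition by induction on the number of blocks (splitting $M$ into a $2\times 2$ macro-block structure at each step), whereas you describe all dyadic scales $N_1,\dots,N_p$ at once; unrolling the paper's induction yields precisely your $D_0 + \sum_s N_s$, so the two presentations are equivalent.
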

\begin{proof}
  The idea of the proof is to write 
 \begin{align}\label{eq:log2}
 \UU_{\rm b}(M)=\sum_{i=1}^r A_i\quad\text{with } \snorm{A_i}{\infty}\leq \snorm{M}{\infty}
 \end{align}
 and $r\leq \log_2(m)+1$ whenever $m$ is a power of two. This is done by induction on $m$. Assume $m=1$, then $A_1=M$ and $r=1$, since $\UU_{\rm b}(M)=M$.
 Assume that~\eqref{eq:log2} is true for block-matrices with $m$ rows and columns. Let $M$ be a block-matrix with $2m$ rows and columns. Then, we may partition 
 \begin{align*}
  M=\begin{pmatrix} M(0:m-1,0:m-1) & M(0:m-1,m:2m-1)\\
     M(m:2m-1,0:m-1) & M(m:2m-1,m:2m-1)
    \end{pmatrix}.
 \end{align*}
The induction assumption provides $\UU_{\rm b}(M(0:m-1,0:m-1))=\sum_{i=1}^r A_{i,1}$ and $
 \UU_{\rm b}(M(m:2m-1,m:2m-1))=\sum_{i=1}^r A_{i,2}$ with $r\leq \log_2(m)+1$. We define 
 \begin{align*}
  A_i:=\begin{pmatrix} A_{i,1} & 0 \\ 0 & A_{i,2}\end{pmatrix}\quad\text{for }i=1,\ldots,r\quad\text{and}\quad A_{r+1}:=\begin{pmatrix} 0& M(0:m-1,m:2m-1) \\ 0 & 0\end{pmatrix},
 \end{align*}
where $\snorm{A_i}{\infty}\leq \max_{j=1,2}\snorm{A_{i,j}}{\infty}\leq \snorm{M}{\infty}$ for $i=1,\ldots,r$ and $\snorm{A_{r+1}}{\infty}\leq \snorm{M}{\infty}$. Hence, we may write $\UU_{\rm b}(M)=\sum_{i=1}^{r+1} A_i$, thus proving~\eqref{eq:log2} for $2m$. This concludes the induction and proves~\eqref{eq:log2} for $m$ being a power of two. If $m$ is not a power of two, we may extend $M$ with zero blocks without changing the norm of the matrix or its block-triangular truncation, resulting in~\eqref{eq:log2} with $r\leq \lceil\log_2(m)\rceil+1$. The triangle inequality concludes the proof.
\end{proof}

The combination of Lemmas~\ref{lem:bsnorm}--\ref{lem:blocktruncinfty} allows us to follow the proofs of Lemma~\ref{lem:Astar2}, Proposition~\ref{prop:LUbound}, and Theorem~\ref{thm:LUbound} by 
replacing $\snorm{\cdot}{p}$ by $\bsnorm{\cdot}{p}$, matrices by block-matrices, triangular matrices with block-triangular matrices, and $\UU$ with $\UU_{\rm b}$. All calculations on the level of matrix entries in the previous section transfer verbatim to matrix blocks. For the convenience of the reader, we summarize the main steps of the proof below the following counterpart of Theorem~\ref{thm:LUbound}.

\begin{theorem}\label{thm:blockLUbound}
Let $C,\gamma>0$.  Then, there exist constants $C_{\rm LU}>0$ and $p>2$ such that for $n\in\N$, all matrices $M\in\R^{n\times n}$ with $\snorm{M}{\infty}\leq C$ and $\max_{0\leq j\leq m-1}\snorm{(M[j])^{-1}}{\infty}\leq 1/\gamma$ have a block-$LU$-factorization $M=LU$  with block-lower/upper-triangular $L,U\in\R^{n\times n}$ such that
\begin{align*}
 \snorm{L}{\infty}+\snorm{U}{\infty}+\snorm{L^{-1}}{\infty}+\snorm{U^{-1}}{\infty}\leq C_{\rm LU}m^{1/p}.
\end{align*}
\end{theorem}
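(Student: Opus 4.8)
The plan is to run the proof of Theorem~\ref{thm:LUbound} one level of granularity higher: replace the Schatten norms $\snorm{\cdot}{p}$ by the block norms $\bsnorm{\cdot}{p}$ of Lemma~\ref{lem:bsnorm}, the triangular truncation $\UU$ by the block truncation $\UU_{\rm b}$, leading principal submatrices $M|_{j\times j}$ by leading principal block submatrices $M[j]$, and triangular matrices by block-triangular ones. Every entrywise identity of Section~\ref{sec:growth} then holds verbatim on the level of matrix blocks. First I would record that the hypotheses force each $M[j]$, $0\le j\le m-1$, to be regular, so the block-$LU$-factorization $M=LU$ exists and is unique, and that with $A:=\sqrt\alpha M$ and $\alpha:=\gamma^2/C^4$ the argument behind~\eqref{eq:contraction}, which uses only $\ell_2$-norms of vectors, gives $\snorm{I-A[j]A[j]^T}{\infty}\le\sqrt{1-\gamma^4/C^4}$ uniformly in $j$.

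Next comes the block analogue of Lemma~\ref{lem:Astar2}. Define $A_\star^k$ block-wise by $(A_\star^k)(i,j):=\big((I-A[j]A[j]^T)^k\big)(i,j)$ for $i\le j$ and $(A_\star^k)(i,j):=0$ for $i>j$. The block version of~\eqref{eq:tricut}, $(A\,\UU_{\rm b}(B))(i,j)=(A[j]B[j])(i,j)$ for $i\le j$, gives the recursion $A_\star^k=A_\star^{k-1}-\UU_{\rm b}\big(A\,\UU_{\rm b}(A^{T}A_\star^{k-1})\big)$ with $A_\star^1=I-\UU_{\rm b}\big(A\,\UU_{\rm b}(A^{T})\big)$, the block-column identity $(A_\star^k)(0{:}j,j)=(I-A[j]A[j]^T)(A_\star^{k-1})(0{:}j,j)$, and --- via the elementary identity $(U^{-1})(i,j)=((M[j])^{-1})(i,j)$ for $i\le j$ together with the Neumann series~\eqref{eq:neumann} applied to each $M[j]$ --- the representation $U^{-1}=\alpha\,\UU_{\rm b}\big(M^{T}(I+\sum_{m\ge1}A_\star^m)\big)$. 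These are one-to-one transcriptions of the corresponding steps in Section~\ref{sec:growth}.

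The quantitative estimate then copies the proof of Proposition~\ref{prop:LUbound}. Lemma~\ref{lem:bsnorm}(iv), applied to the block-column identity with $B_j=I-A[j]A[j]^T$, gives $\bsnorm{A_\star^k}{2}^2\le(1-\gamma^4/C^4)\bsnorm{A_\star^{k-1}}{2}^2$; the recursion together with Lemma~\ref{lem:blocktruncation} for $\bsnorm{\cdot}{4}$ and the sub-multiplicativity Lemma~\ref{lem:bsnorm}(i) gives $\bsnorm{A_\star^k}{4}\le(1+4\snorm{A}{\infty}^2)\bsnorm{A_\star^{k-1}}{4}$. Since $\snorm{A}{\infty}^2\le\gamma^2/C^2$, a sufficiently small $t\in(0,1)$ makes $q:=(1-\gamma^4/C^4)^{(1-t)/2}(1+4\snorm{A}{\infty}^2)^{t}<1$, and Lemma~\ref{lem:bsnorm}(ii) (monotonicity $\snorm{\cdot}{\infty}\le\bsnorm{\cdot}{p}$) yields $\snorm{A_\star^k}{\infty}\le\bsnorm{A_\star^k}{2}^{1-t}\bsnorm{A_\star^k}{4}^{t}\le q^{k-1}\bsnorm{A_\star^1}{2}^{1-t}\bsnorm{A_\star^1}{4}^{t}$. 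Using Lemma~\ref{lem:blocktruncation} once more with Lemma~\ref{lem:bsnorm}(iii), $\bsnorm{A_\star^1}{2}\lesssim m^{1/2}$ and $\bsnorm{A_\star^1}{4}\lesssim m^{1/4}$ (the $I$-part dominates), so $\snorm{A_\star^k}{\infty}\lesssim q^{k-1}m^{1/p}$ with $p:=\big((1-t)/2+t/4\big)^{-1}>2$. Feeding this and the logarithmic bound of Lemma~\ref{lem:blocktruncinfty} into the representation of $U^{-1}$ gives $\snorm{U^{-1}}{\infty}\lesssim\log(m)\,m^{1/p}\sum_{\ell\ge0}q^{\ell}$, hence $\snorm{L}{\infty}\le\snorm{M}{\infty}\snorm{U^{-1}}{\infty}\lesssim\log(m)\,m^{1/p}$, and replacing $p$ by some $\widetilde p\in(2,p)$ absorbs the logarithm. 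For $U$ and $L^{-1}$ one proceeds as in the proof of Theorem~\ref{thm:LUbound}: $M^{T}$ satisfies the same hypotheses, so its block-$LU$-factors $\widetilde L,\widetilde U$ obey $\snorm{\widetilde L}{\infty}+\snorm{\widetilde U^{-1}}{\infty}\lesssim m^{1/p}$; writing $D$ for the block-diagonal part of $U$, the blockwise $(2\times2)$-factorization~\eqref{eq:2x2} gives $D(j,j)=M(j,j)-R_2^{T}M[j-1]^{-1}R_1$ with $\snorm{D(j,j)}{\infty}\le C+C^2/\gamma$; and $L^{-1}=D\widetilde U^{-T}$, $U=L^{-1}M$ then give $\snorm{L^{-1}}{\infty}+\snorm{U}{\infty}\lesssim m^{1/p}$.

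The step I expect to need the most attention is not any single line --- those are all mechanical transcriptions --- but verifying that the purpose-built block norm $\bsnorm{\cdot}{2}$ transports the contraction along the iteration, i.e.\ Lemma~\ref{lem:bsnorm}(iv): this works precisely because $\bsnorm{\cdot}{2}$ is measured one block column at a time and the $j$-th block column of $A_\star^k$ equals the $j$-th block column of $A_\star^{k-1}$ left-multiplied by $I-A[j]A[j]^T$, so the scalar factor $\sqrt{1-\gamma^4/C^4}$ pulls out of every block column at once. The other structural ingredient is the dimensional bound $\bsnorm{\cdot}{p}\le m^{1/p}\snorm{\cdot}{\infty}$ of Lemma~\ref{lem:bsnorm}(iii), which is exactly what turns the $n^{1/2-\delta}$ of Theorem~\ref{thm:LUbound} into the $m^{1/2-\delta}$ needed for Lemma~\ref{lem:luqoN}. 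With both of these in hand --- and they are, by the lemmas of this section --- the remainder is bookkeeping.
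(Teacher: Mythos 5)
Your proposal is correct and follows exactly the same route as the paper's sketch: transcribe the scalar argument (Lemma~\ref{lem:Astar2}, Proposition~\ref{prop:LUbound}, Theorem~\ref{thm:LUbound}) to blocks using $\bsnorm{\cdot}{p}$, $\UU_{\rm b}$, and Lemmas~\ref{lem:bsnorm}--\ref{lem:blocktruncinfty}, with the $\bsnorm{\cdot}{2}$-contraction via Lemma~\ref{lem:bsnorm}(iv), the $\bsnorm{\cdot}{4}$-stability via Lemmas~\ref{lem:blocktruncation} and~\ref{lem:bsnorm}(i), interpolation to get the rate, and the transpose/Schur-complement argument for $L^{-1}$ and $U$. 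Your choice $\alpha=\gamma^2/C^4$ is the one consistent with Proposition~\ref{prop:LUbound} (the paper's sketch writes $\gamma^2/C^2$, which appears to be a typo); the rest is a faithful bookkeeping match.
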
 
{
\begin{proof}[Sketch of proof]
Given a block-matrix $A\in\R^{n\times n}$, we define
\begin{align*}
  (A^k_\star)(i,j):=\begin{cases} ((I_{n_j}-A[j]A[j]^T)^k)(i,j) & i\leq j,\\
                     0 &i>j,
                    \end{cases}
 \end{align*}
 and prove, analogously to Lemma~\ref{lem:Astar2}, the identity~\eqref{eq:Astarit}. The same arguments also show for all $0\leq j\leq m-1$ that
\begin{align}\label{eq:Astarcolblock}
 (A_\star^k)(0:j, j) = (I_{n_j}-A[j]A[j]^T) (A_\star^{k-1})(0:j,j).
\end{align}
We define $A:=\sqrt{\alpha}M$ and use~\eqref{eq:luid} below to obtain verbatim to Lemma~\ref{lem:Astar2} that
\begin{align}\label{eq:Mrepblock}
  U^{-1} = \alpha\UU_{\rm b}\Big( M^T\Big(I_n+\sum_{m=1}^\infty A_\star^m\Big)\Big)
\end{align}
as long as $\alpha>0$ is chosen such that
$\max_{0\leq j\leq m-1}\snorm{{I_{n_j}}-\alpha M[j]M[j]^T}{\infty}<1$.
We follow the proof of Proposition~\ref{prop:LUbound} to show that $\alpha=\gamma^2/C^2$ ensures
\begin{align}\label{eq:contractionblock}
\snorm{I_{n_j}-A[j]A[j]^T}{\infty}\leq \sqrt{1-\gamma^4/C^4}
\quad\text{for all }0\leq j\leq m-1
\end{align}
and hence~\eqref{eq:Mrepblock}.
Moreover, Lemma~\ref{lem:bsnorm}~(iv) with $B_j:=(I_{n_j}-A[j]A[j]^T)$ shows
\begin{align}\label{eq:2redblock}
\begin{split}
 \bsnorm{A_\star^k}{2}^2  \leq (1-\gamma^4/C^4)\bsnorm{A_\star^{k-1}}{2}^2.
 \end{split}
\end{align}
We follow the remainder of the proof of Proposition~\ref{prop:LUbound} to obtain
$\bsnorm{A_\star^1}{p}\lesssim \bsnorm{A^T}{p}\leq m^{1/p}\snorm{A^T}{\infty}=m^{1/p}\snorm{A}{\infty}$ (Lemma~\ref{lem:bsnorm}~(iii))
and
 \begin{align*}
  \snorm{A_\star^k}{\infty}\lesssim q^{k-1}m^{(1-t)/2+t/4}\snorm{A}{\infty}\quad\text{for some }0<q<1.
 \end{align*} 
This, Lemma~\ref{lem:blocktruncinfty}, and the representation~\eqref{eq:Mrepblock} conclude
\begin{align*}
 \snorm{L}{\infty}\lesssim \snorm{U^{-1}}{\infty} \lesssim  m^{1/p}\snorm{M}{\infty}\snorm{A}{\infty}\sum_{k=0}^\infty q^k.
\end{align*}
with $2<p< 1/((1-t)/2+t/4)$.
To obtain the bound for $\snorm{U}{\infty}$, we follow the proof of Theorem~\ref{thm:LUbound} and define the block-diagonal matrix $D\in\R^{n\times n}$ with $D(i,i)=U(i,i)$, $0\leq i\leq m-1$. With the block-$LU$-factorization $M^T=\widetilde L\widetilde U$, we obtain analogously to the proof of Theorem~\ref{thm:LUbound} that
\begin{align*}
 \snorm{U}{\infty}\lesssim \snorm{L^{-1}}{\infty}= \snorm{D\widetilde U^{-T}}{\infty}\leq C_{\rm LU}m^{1/p}\snorm{D}{\infty}.
\end{align*}
Finally, uniqueness of the normalized block-$LU$-factorization implies $U(j,j)=D(j,j)= R_3-R_2^TM[j]^{-1}R_1$, where $R_1:=M(0:j,j)$, $R_2:= (M(j,0:j))^T$ and $R_3:=M(j,j)$ for all $0\leq j\leq m-1$ (see~\eqref{eq:2x2}).
This shows	$|D(j,j)|\leq \snorm{M}{\infty}+\snorm{M}{\infty}^2/\gamma\leq C+C^2/\gamma$ and concludes the proof.
\end{proof}
}

Additionally, we require the following elementary observation.
\begin{lemma}\label{lem:blockUstab}
Let $M$ satisfy $\max_{0\leq j\leq m-1}\snorm{M[j]^{-1}}{\infty}\leq 1/\gamma$ for some $0<\gamma<1$. 
Then, $M=LU$ has a block-$LU$-factorization with
\begin{align*}
 \max_{j=0,\ldots,m-1}\snorm{U^{-1}(:,j)}{\infty}\leq 1/\gamma.
\end{align*} 
\end{lemma}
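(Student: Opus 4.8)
The plan is to reduce the bound on the block column $U^{-1}(:,j)$ to a statement about the leading principal block minor $M[j]$, by means of the elementary identity $(U^{-1})(i,j)=(M[j]^{-1})(i,j)$ for $0\le i\le j$ (this is \eqref{eq:luid} below; I would re-derive the needed instance for completeness). First I would note that the hypothesis $\snorm{M[j]^{-1}}{\infty}\le 1/\gamma<\infty$ for $0\le j\le m-1$ forces every leading principal block submatrix $M[j]$ to be regular, so the normalized block-$LU$-factorization $M=LU$ exists and is unique (see, e.g., \cite[Theorem~13.2]{higham2002}). Exactly as in the proof of Lemma~\ref{lem:luqoN}, the block-triangular structure yields $M[j]=L[j]U[j]$; since $L[j]$ is block-lower-triangular with $L[j](k,k)=I$ and $U[j]$ is block-upper-triangular and invertible (because $M[j]$ and $L[j]$ are), this is precisely the normalized block-$LU$-factorization of $M[j]$, and in particular $M[j]^{-1}=U[j]^{-1}L[j]^{-1}$.

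Next, fix $j$ and pass to $2\times2$ block form split along the $j$-th block, i.e.\ blocks $0,\dots,j$ versus blocks $j+1,\dots,m-1$. Since $U$ is block-upper-triangular, $U^{-1}$ is block-upper-triangular with top-left block equal to $U[j]^{-1}$; hence the $j$-th block column of $U^{-1}$ lies entirely in the first block-column group and equals $(U[j]^{-1})(:,j)$ stacked on top of a zero block, so that $\snorm{U^{-1}(:,j)}{\infty}=\snorm{(U[j]^{-1})(:,j)}{\infty}$. To identify this with a column of $M[j]^{-1}$, I would use that $L[j]^{-1}$ is block-lower-triangular with identity diagonal blocks, so that its last block column has every block entry zero except the bottom one, which is $I$; reading off $M[j]^{-1}=U[j]^{-1}L[j]^{-1}$ column-wise then gives $(M[j]^{-1})(:,j)=(U[j]^{-1})(:,j)$.

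Finally, since any block column of a matrix has spectral norm at most that of the whole matrix (apply the matrix to vectors supported on those columns), I conclude
\begin{align*}
 \snorm{U^{-1}(:,j)}{\infty}=\snorm{(M[j]^{-1})(:,j)}{\infty}\le\snorm{M[j]^{-1}}{\infty}\le 1/\gamma,
\end{align*}
and taking the maximum over $0\le j\le m-1$ finishes the proof. There is essentially no obstacle here; the only point requiring care is the bookkeeping of which blocks survive --- concretely, using the normalization $L(k,k)=I$ so that the trailing block column of $L[j]^{-1}$ reduces to a single identity block, and using block-upper-triangularity of $U^{-1}$ to discard its sub-diagonal blocks in column $j$. (One could equally just invoke \eqref{eq:luid} together with the block-upper-triangularity of $U^{-1}$, which immediately gives $\snorm{U^{-1}(:,j)}{\infty}=\snorm{(M[j]^{-1})(:,j)}{\infty}\le\snorm{M[j]^{-1}}{\infty}$.)
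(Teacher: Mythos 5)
Your argument is correct and is essentially identical to the paper's: both proofs rest on the identity $M[j]^{-1}=U[j]^{-1}L[j]^{-1}$, exploit the normalization $L^{-1}(r,j)=0$ for $r<j$, $L^{-1}(j,j)=I$ to identify $(M[j]^{-1})(:,j)$ with $(U[j]^{-1})(:,j)$, and then use block-upper-triangularity of $U^{-1}$ and the fact that a block column has spectral norm at most that of the full matrix. The only cosmetic differences are that you spell out the existence of the block-$LU$-factorization and phrase the bookkeeping via a $2\times2$ block split rather than the direct entry-wise sum in \eqref{eq:luid}.
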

\begin{proof}
We note for $0\leq j<m$ that $M[j]=L[j]U[j]$ as well as $L^{-1}(r,j)=0$ for $r<j$ and $L^{-1}(j,j)=I$. Hence, there holds for $i\leq j$
	\begin{align}\label{eq:luid}
	M[j]^{-1}(i,j) = \sum_{r=0}^jU[j]^{-1}(i,r) L[j]^{-1}(r,j) =U[j]^{-1}(i,j)=U^{-1}(i,j),
	\end{align}
	where  the last identity follows from the fact that $U^{-1}$ is block-upper triangular. This immediately implies $\snorm{U^{-1}(:,j)}{\infty} = \snorm{M[j]^{-1}(0:j,j)}{\infty}$  and thus concludes the proof.
\end{proof} 

\subsection{Proof of Theorem~\ref{thm:opt}}\label{sec:proof2} 
Reliability~\eqref{eq:rel} of the estimator sequence follows from (A4),~\eqref{eq:cea}, and the density of $\bigcup_{\TT\in\T}\XX_\TT\subseteq \XX$ via~\cite[Lemma~3.4]{axioms} with a constant $C_{\rm rel}$ that depends only on $C_{\rm dlr}$.
{The result~\cite[Lemma~4.7]{axioms} shows that (A1)--(A2) imply  estimator reduction~\eqref{eq:estred} for some constants $\kappa$ and $C>0$ that depend only on the constants in (A1) and (A2). Moreover,~\cite[Lemma~3.5]{axioms} shows that (A1), (A2), and (A4) imply quasi-monotonicity~\eqref{eq:qmon} for some constant $C_{\rm mon}>0$ that depends only on the constants in (A1), (A2),  and (A4).}

The stability~\eqref{eq:Mnorm} for the matrix $M$ from Definition~\ref{def:basis} shows that $M$ satisfies the requirements of Theorem~\ref{thm:blockLUbound}. Thus, the block-$LU$-factorization  $M=LU$ satisfies $\norm{U}{2}\lesssim N^{1/2-\delta}$ with some uniform $\delta>0$ and {hidden constant only depending on $C_a$ and $\gamma$.} Lemma~\ref{lem:blockUstab} shows $\norm{U^{-1}(:,j)}{2}\lesssim 1$ ({with hidden constant depending only on $\gamma$}) and hence Lemma~\ref{lem:luqoN} proves general quasi-orthogonality~\eqref{eq:qoN} with $C(N)\lesssim N^{1-2\delta}$.
With this, Lemmas~\ref{lem:Nsum0}--\ref{lem:Nsum} together with Remark~\ref{rem:cond} show
\begin{align}\label{eq:linconv}
 \eta_{\ell+k}^2\leq Cq^k\eta_\ell^2
\end{align}
for all $\ell,k\in\N$ and uniform constants $0<q<1$ and $C>0$.
With this, we have all the requirements of~\cite[Lemma~4.12]{axioms} to prove the so-called \emph{optimality of D\"orfler marking}. Then, the results~\cite[Lemma~4.14]{axioms} and~\cite[Proposition~4.15]{axioms} prove rate-optimality~\eqref{eq:opt} for Algorithm~\ref{alg:adaptive}.
This concludes the proof of Theorem~\ref{thm:opt}.
\qed

\begin{remark}
Note that a key argument in the proof is $C(N)=o(N)$. Without any assumptions, the triangle inequality together with the C\'ea lemma~\eqref{eq:cea} imply $\norm{u_{k+1}-u_k}{\XX}\leq 2\norm{u-u_\ell}{\XX}$ for all $k\geq \ell$ and hence already show quasi-orthogonality~\eqref{eq:qoN} with $C(N)\simeq N$. 
Under the assumptions of Theorem~\ref{thm:opt},~\eqref{eq:linconv} implies
\begin{align*}
 \sum_{k=\ell}^N \norm{u_{k+1}-u_k}{\XX}^2\lesssim \sum_{k=\ell}^\infty \eta_k^2\lesssim  \eta_\ell^2,
\end{align*}
which is the general quasi-orthogonality~\eqref{eq:qoN2} with $C(N)\simeq 1$ and $\eps=0$. Although we only show $C(N)=o(N)$ in the proof above, we obtain $C(N)\simeq 1$ after the fact. 
\end{remark}
\section{Application: The Stokes problem}\label{sec:stokes}
\def\bu{\boldsymbol{u}}
\def\bp{\boldsymbol{p}}
Optimality of the adaptive algorithm for the Taylor-Hood discretization of the Stokes problem for $d=2$ has already been proven in~\cite{stokesopt} under a mild mesh-grading condition. We revisit this problem in order to generalize the result to $d\in\{2,3\}$, to remove the mesh condition, and to present a drastically simplified proof. 

\subsection{The stationary Stokes equation}
On a polyhedral domain $\Omega\subset \R^d$, we consider the stationary Stokes problem
\begin{align}\label{eq:stokes}
\begin{split}
 -\Delta \bu +\nabla \bp &=f\quad\text{in }\Omega,\\
 {\rm div} \bu &= 0\quad\text{in }\Omega,\\
 \bu&=0\quad\text{on }\partial\Omega,\\
\int_\Omega \bp\,dx&=0
\end{split}
 \end{align}
for given functions $f\in L^2(\Omega)$ with weak solutions $\bu\in H_0^1(\Omega)^2$ and $\bp\in L^2(\Omega)$. 
We define the space $\XX:=\YY:=H^1_0(\Omega)^2\times L^2_\star(\Omega)$.

The weak formulation of~\eqref{eq:stokes} reads: Find $(\bu,\bp)\in\XX$ such that all $(v,q)\in\XX$ satisfy
\begin{align}\label{eq:weakstokes}
a((\bu,\bp),(v,q)):= \int_\Omega \nabla \bu\cdot \nabla v\,dx - \int_\Omega \bp\,{\rm div}v\,dx -\int_\Omega q\,{\rm div}\bu\,dx = \int_\Omega fv\,dx.
\end{align}
For the purpose of discretization, we choose standard Taylor-Hood elements defined by
\begin{align*}
 \XX_\TT:=\YY_\TT:= \mathcal{S}^2_0(\TT)^2 \times \mathcal{S}^1_\star(\TT),
\end{align*}
where 
\begin{align*}
 	 \PP^p(\TT)&:=\set{ v\in L^2(\Omega)}{v|_T\text{ is polynomial of degree }\leq p,\,T\in\TT},\\
 	 \mathcal{S}^p(\TT)&:= \PP^p(\TT)\cap H^1(\Omega).
 	\end{align*}
and $\mathcal{S}^p_0(\TT):=\mathcal{S}^p(\TT)\cap H^1_0(\Omega)$ as well as $\mathcal{S}^p_\star(\TT):=\set{v\in\mathcal{S}^p(\TT)}{\int_\Omega v\,dx=0}$.
Thus, the Galerkin formulation reads: Find $(\bu_\TT,\bp_\TT)\in\XX_\TT$ such that all $(v,q)\in\XX_\TT$ satisfy
\begin{align}\label{eq:galerkin}
a((\bu_\TT,\bp_\TT),(v,q)):= \int_\Omega \nabla \bu_\TT\cdot \nabla v\,dx - \int_\Omega \bp_\TT\,{\rm div}v\,dx -\int_\Omega q\,{\rm div}\bu_\TT\,dx = \int_\Omega fv\,dx.
\end{align}

We use a locally equivalent variation proposed in~\cite{gantstokes}  of the classical error estimator proposed by {Verf\"urth~\cite[Section~4.10.3]{verf}},   
i.e., for all $T\in\TT$ define
 \begin{align*}
\eta_T(\TT)^2&:={\rm diam}(T)^2\norm{f+\Delta \bu_\TT-\nabla \bp_\TT}{L^2(T)}^2 +{\rm diam}(T)\norm{[\partial_n \bu_\TT]}{L^2(\partial T\cap\Omega)}^2\\
&\qquad\qquad + {\rm diam}(T)\norm{{\rm div}(\bu_\TT)|_T}{L^2(\partial T)}^2,
\end{align*}
where $[\cdot]$ denotes the jump across an edge (face) of $\TT$. (Note that there are also other error estimators which could be used here, e.g., those in~\cite{msv}.)
The overall estimator reads
\begin{align*}
 \eta(\TT):=\Big(\sum_{T\in\TT}\eta_T(\TT)^2\Big)^{1/2}\quad\text{for all }\TT\in\T
\end{align*}
and satisfies upper and lower error bounds, i.e.,
\begin{align}\label{eq:releff}
 C_{\rm rel}^{-1}\norm{\bu-\bu_\TT,\bp-\bp_\TT}{\XX} \leq \eta(\TT)^2\leq
 C_{\rm eff}\Big(\norm{\bu-\bu_\TT,\bp-\bp_\TT}{\XX}^2 + {\rm osc}(\TT)^2\Big)^{1/2},
\end{align}
where the data oscillation term reads ${\rm osc}(\TT)^2:= \min_{g\in \PP^0(\TT)}\sum_{T\in\TT} {\rm diam}(T)^2\norm{f-g}{L^2(T)}^2$.

To fit into the abstract framework of Section~\ref{sec:abstract}, we collect velocity and pressure in one variable, i.e., $u=(\bu,\bp)\in\XX$ and $u_\TT=(\bu_\TT,\bp_\TT)\in\XX_\TT$.
According to~\cite{brezzi-fortin} (for $d=2$) and~\cite[Theorem~3.1]{boffi} (for $d=3$), the Stokes problem $a(\cdot,\cdot)\colon\XX\times\XX\to\R$ satisfies~\eqref{eq:uniforminfsup} as long as each element $T$ of $\TT\in\T$ has at least one vertex in the interior of $\Omega$. By definition of newest-vertex bisection (see, e.g.,~\cite{stevenson08}), this is satisfied automatically if each element of
$\TT_0$ has at least one interior vertex.
\subsection{Proof of the assumptions}
The problem fits into the abstract setting of Section~\ref{sec:abstract}. All assumptions of Section~\ref{sec:proof} are verified~\cite[Lemma~3.1--3.2]{gantumur} and hence Theorem~\ref{thm:opt} implies the following result.
\begin{theorem}
 Algorithm~\ref{alg:adaptive} for the Taylor-Hood discretization of the stationary Stokes problem is rate-optimal in the sense~\eqref{eq:opt} {for all $0<\theta<\theta_\star$}.
\end{theorem}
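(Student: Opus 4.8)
The plan is to check that the Taylor-Hood discretization of~\eqref{eq:stokes} satisfies every hypothesis of the abstract Theorem~\ref{thm:opt} and then simply quote that theorem; note that the bilinear form $a(\cdot,\cdot)$ in~\eqref{eq:weakstokes} is symmetric but indefinite, so the Pythagoras identity~\eqref{eq:orth} fails and the new relaxed quasi-orthogonality is exactly what is needed. Two packages of assumptions have to be verified: the structural ones on $a(\cdot,\cdot)$ --- boundedness~\eqref{eq:cont} and uniform inf-sup stability~\eqref{eq:uniforminfsup} --- and the estimator axioms (A1), (A2), and (A4).

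For the structural part, I would first record that boundedness of $a(\cdot,\cdot)\colon\XX\times\XX\to\R$ is immediate from the Cauchy-Schwarz inequality on $H^1_0(\Omega)^d\times L^2_\star(\Omega)$, and that continuous inf-sup stability on $\XX\times\XX$ is the classical well-posedness of the stationary Stokes problem. The substantive point is \emph{uniform} discrete inf-sup stability across the whole mesh family $\T$: I would invoke the Brezzi-Fortin result for $d=2$ and~\cite[Theorem~3.1]{boffi} for $d=3$, which guarantee that the Taylor-Hood pair $\mathcal{S}^2_0(\TT)^d\times\mathcal{S}^1_\star(\TT)$ is inf-sup stable with a constant depending only on shape-regularity provided every simplex of $\TT$ has at least one vertex in the interior of $\Omega$. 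Since newest-vertex bisection preserves this property, it holds for every $\TT\in\T$ once it holds for $\TT_0$, which is assumed. The non-degeneracy requirement --- each $v\in\XX\setminus\{0\}$ admits $u$ with $a(u,v)\neq0$ --- follows at once from symmetry of $a(\cdot,\cdot)$ together with continuous inf-sup stability.

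For the estimator axioms I would rely on the literature: the estimator here is the locally equivalent variant of Verf\"urth's residual estimator from~\cite{gantstokes}, and (A1), (A2), (A4) for it are already established, e.g., in~\cite[Lemmas~3.1--3.2]{gantumur}. The mechanisms are the familiar ones: stability (A1) and reduction (A2) follow from inverse estimates applied to the volume, jump, and divergence contributions of $\eta_T$ together with the deterministic mesh-size reduction of newest-vertex bisection; discrete reliability (A4) is a localized reliability estimate, obtained by inserting into the discrete inf-sup condition a test function built from Scott-Zhang quasi-interpolation and element/face bubble functions supported on the refined region $\RR(\TT,\widehat\TT)$.

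With all hypotheses in place, Theorem~\ref{thm:opt} applies verbatim and yields rate-optimality~\eqref{eq:opt} of Algorithm~\ref{alg:adaptive} for every $0<\theta<\theta_\star=(1+C_{\rm stab}^2C_{\rm dlr}^2)^{-1}$, with optimality constant depending only on the constants in (A1), (A2), (A4), on $C_a$, $\gamma$, $d$, and $\TT_0$. The single step with genuine content is the uniform discrete inf-sup stability over the adaptively refined family $\T$ --- precisely the ingredient that, once general quasi-orthogonality (A3) has been eliminated from the abstract proof in Sections~\ref{sec:qoN}--\ref{sec:growth}, now suffices on its own, in contrast to~\cite{stokesopt}, where an intricate wavelet-type Riesz basis and a mild mesh-grading condition were additionally required. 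Everything else is bookkeeping of constants.
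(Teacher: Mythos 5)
Your proof is correct and follows exactly the paper's route: verify boundedness and the uniform discrete inf-sup condition for Taylor--Hood via Brezzi--Fortin ($d=2$) and \cite[Theorem~3.1]{boffi} ($d=3$) using the interior-vertex property preserved by newest-vertex bisection, cite \cite[Lemmas~3.1--3.2]{gantumur} for (A1), (A2), (A4), and then apply Theorem~\ref{thm:opt}. The extra remarks you add on the mechanisms behind the estimator axioms are accurate but not a different argument.
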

\section{Application: Non-symmetric FEM-BEM coupling}\label{sec:fembem}
Transmission problems on unbounded domains have to be discretized with artificial boundary conditions. For general (non-convex) geometries, one of the few available methods is the coupling of finite-elements in the interior with boundary-elements for the exterior (FEM-BEM coupling). The first FEM-BEM coupling approach for such a
problem was Costabel`s symmetric coupling~\cite{costabel}. While this coupling method induces 
an operator that is symmetric, it lacks positive definiteness. 
Reformulation of the method into a positive definite one destroys the symmetry. In this section, we focus on proving optimality of the adaptive algorithm for this non-symmetric one-equation coupling or Johnson-N\'ed\'elec coupling first proposed in~\cite{johned} 
(see also~\cite{fembem} for further details).
However, optimality is also open for the \emph{symmetric} method. In principle, the methods
developed here can be used directly to prove optimality for Costabel`s symmetric coupling. 

Convergence of the adaptive algorithm for FEM-BEM coupling has been shown in~\cite{afembem,fembem}, however, optimal convergence
is only available for $d=2$ under a mild mesh-grading condition and with unusual discrete spaces (see~\cite{fembemopt}). The proof below vastly simplifies the proof from~\cite{fembemopt}, generalizes it to $d\in\{2,3\}$, and removes the mesh condition as well as the requirement for somewhat artificial discretization spaces.

\subsection{The transmission problem}
In the following, $\Omega\subseteq \R^d$, $d=2,3$ is a polygonal domain with boundary $\Gamma:=\partial\Omega$.
We denote by $H^s(\Gamma)$ the usual Sobolev spaces for $s\geq 0$. For non-integer values of $s$, we use real 
interpolation to define $H^s(\Gamma)$. 
Their dual spaces $ H^{-s}(\Gamma)$ are interpreted by extending the $L^2$-scalar product.
We consider a transmission problem of the form
\begin{align}\label{eq:transmission}
\begin{split}
 -\Delta u &= F\quad\text{in }\Omega,\\
 -\Delta u &= 0\quad\text{in }\R^d\setminus\overline{\Omega},\\
 [u]&= u_0\quad\text{on }\Gamma,\\
 [\partial_n u]&=\phi_0\quad\text{on }\Gamma,\\
 |u(x)|&={\left\{\begin{array}{ll} c\log|x| + \mathcal{O}(|x|^{-1}),& d=2,\\
          \mathcal{O}(|x|^{-1}),& d=3,
         \end{array}\right\}}\quad\text{as }|x|\to \infty,
\end{split}
 \end{align}
for {$c\in\R$ (see e.g.~\cite[Theorem~8.9]{mclean} for well-posedness)} and functions $F\in L^2(\Omega)$, $u_0\in H^{1/2}(\Gamma)$, and $\phi_0\in L^2(\Gamma)$. Here, $[\cdot]$ denotes the jump over $\Gamma$ and $\partial_n$ is the normal derivative (gradient) on $\Gamma$.

We define $\XX:=\YY:=H^1(\Omega)\times H^{-1/2}(\Gamma)$. With the Newton kernel
\begin{align*}
 G(z):=\begin{cases} -\frac{1}{2\pi}\log|z|& d=2,\\
        \frac{1}{4\pi}|z|^{-1}& d=3,
       \end{cases}\quad z\in\R^d\setminus\{0\}
\end{align*}
we define the integral operators
\begin{align}\label{eq:integralops}
(V\phi)(x):=\int_\Gamma G|x-y|\phi(y)\, ds_y \quad \text{ and } \quad 
(K g)(x):=\int_\Gamma \partial_{n(y)}G|x-y|g(y)\, ds_y 
\end{align}
for all $x\in\Gamma$, where $ds_y$ denotes the surface measure. We consider a weak form of the problem above first proposed in~\cite{johned}, i.e., the Johnson-N\'ed\'elec one equation coupling: Find $(\uint,\phi)\in\XX$ such that
\begin{align}\label{eq:weakfembem}
 a((\uint,\phi),(\vint,\psi))= f(\vint,\psi)\quad\text{for all }(\vint,\psi)\in \XX
\end{align}
with
\begin{align*}
  a((\uint,\phi),(\vint,\psi)):= \dual{\nabla \uint}{\nabla \vint}_\Omega -\dual{\phi}{\vint}_\Gamma + \dual{(1/2-K)\uint}{\psi}_\Gamma +\dual{V\phi}{\psi}_\Gamma
\end{align*}
and
\begin{align*}
  f(\vint,\psi):= \dual{F}{\vint}_\Omega +\dual{\phi_0}{\vint}_\Gamma + \dual{\psi}{(1/2-K)u_0}_\Gamma.
\end{align*}
The connection to the transmission problem~\eqref{eq:transmission} is given by
\begin{align*}
 u|_\Omega = u^{\rm int}, \quad \partial_n u|_{\R^d\setminus\Omega} = -\phi,\quad\text{and } u|_{\R^d\setminus\Omega} = V\phi + K u^{\rm int}.
\end{align*}
Existence of unique solutions of the above method was first proved in~\cite{johned} for the case of smooth $\Gamma$. 
Almost three decades later, Sayas~\cite{sayas} proved existence of unique solutions also for the case of polygonal boundaries $\Gamma$. This work was extended in~\cite{fembem} to
nonlinear material parameters and other coupling methods.

In order to discretize the problem, we introduce the space of $\TT$-elementwise linear functions that are globally continuous $\SS^1(\TT)$ as well as the $\TT$-elementwise constant functions on the boundary $\PP^0(\TT|_\Gamma)$.
Given a Galerkin solution $(\uint_\TT,\phi_\TT)\in \XX_\TT:=\YY_\TT:=\mathcal{S}^{1}(\TT)\times \PP^0(\TT|_\Gamma)$ such that
\begin{align}\label{eq:solutions}
 a((\uint_\TT,\phi_\TT),(\vint,\psi))= f(\vint,\psi)\quad\text{for all }(\vint,\psi)\in \XX_\TT,
\end{align}
the corresponding residual-based error estimator (see e.g.~\cite{afp,fembem} for the derivation) reads element-wise for all $T\in\TT$
 \begin{align*}
\eta_T(\TT)^2&:={\rm diam}(T)^2\norm{F}{L^2(T)}^2 +{\rm diam}(T)\norm{[\partial_n \uint_\TT]}{L^2(\partial T\cap\Omega)}^2\\
 &\qquad + {\rm diam}(T)\norm{\phi_0+\phi_\TT-\partial_n \uint_\TT}{L^2(\partial T\cap\Gamma)}^2\\
 &\qquad+{\rm diam}(T)\norm{\partial_\Gamma((\tfrac12-K)(u_0-\uint_\TT)-V\phi_\TT)}{L^2(\partial T\cap\Gamma)}^2,
\end{align*}
where $\partial_\Gamma$ denotes the tangential derivative (gradient) on $\Gamma$.
Note that the exterior problem affects the estimator only on elements $T\in \TT$ with $T\cap\Gamma\neq \emptyset$.  
The overall estimator reads
\begin{align*}
 \eta(\TT):=\Big(\sum_{T\in\TT}\eta_T(\TT)^2\Big)^{1/2}\quad\text{for all }\TT\in\T
\end{align*}
and is reliable in the sense
\begin{align*}
 \norm{\uint-\uint_\TT}{H^1(\Omega)}+ \norm{\phi-\phi_\TT}{H^{-1/2}(\Gamma)}\leq C_{\rm rel}\eta(\TT),
\end{align*}
but is not known to also provide a lower error bound (although this is observed in practice).
We collect the functions via the notation $u_\TT:=(\uint_\TT,\phi_\TT)$ as well as $v:=(\vint,\psi)$ to fit into the abstract setting of Section~\ref{sec:abstract}.

\subsection{Proof of the assumptions}
\begin{proof}[Proof of inf-sup stability]
For~\eqref{eq:uniforminfsup}, we mention~\cite{sayas}, which gave the first stability proof for Lipschitz domains (earlier proofs used that $K$ is a compact operator on smooth domains). This is extended in~\cite{fembem} to nonlinear coefficients and other coupling methods.
\end{proof}

The proofs of the properties (A1), (A2), and (A4) are combinations of techniques from the FEM case and from the BEM case (mainly from~\cite{fkmp}). While no expert will be surprised by the following
results, they cannot be found in the literature and we included them for completeness.
\begin{proof}[Proof of (A1)--(A2)]
 The statements~(i) and~(ii) are part of the proof of~\cite[Theorem~25]{fembem} and follow from the triangle inequality and 
 local inverse estimates for the non-local operators $ V$ and $ K$ from~\cite{invest}. The constants $C_{\rm stab},C_{\rm red},q_{\rm red}$ depend only on $\Gamma$ and the shape regularity of $\TT$
 and $\widehat\TT$.
\end{proof}

\begin{proof}[Proof of (A4)]
 The proof is essentially the combination of the corresponding proofs for FEM in~\cite{stevenson07,ckns} and BEM in~\cite{fkmp}. We define the patch $\omega(\SS,\TT):=\set{T\in\TT}{T\cap \bigcup \SS\neq \emptyset}$ for all $\SS\subseteq \TT$. For
  $v_{\widehat{\TT}}\in\mathcal{S}^{1}(\widehat\TT)\times \PP^0(\widehat\TT|_\Gamma)$, Galerkin orthogonality implies
 \begin{align*}
 a(u_{\widehat{\TT}}-u_{{\TT}},v_{\widehat{\TT}})
  &=f(v_{\widehat{\TT}}-v_{{\TT}})- a(u_{{\TT}},v_{\widehat{\TT}}-v_{{\TT}})\quad\text{for all }v_{{\TT}}\in\XX_\TT.
 \end{align*}
Recall the Scott-Zhang projection $J_\TT: H^1(\Omega)\to \mathcal{S}^{1}(\TT)$ from~\cite{scottzhang} as well as the $L^2(\Gamma)$-orthogonal projection 
$\Pi_\TT:L^2(\Gamma) \to \PP^{0}(\TT|_\Gamma)$. With this, define
\begin{align*}
 v_{{\TT}}:=(J_\TT\uint_{\widehat{\TT}},\Pi_\TT \psi_{\widehat{\TT}})\in\mathcal{S}^{1}(\TT)\times \PP^0(\TT|_\Gamma).
\end{align*}
This implies
\begin{align}\label{eq:johneddrelstart}
\begin{split}
  a(u_{\widehat{\TT}}-u_{{\TT}},v_{\widehat{\TT}})&= \dual{F}{(1-J_\TT)\uint_{\widehat{\TT}}}_{L^2(\Omega)}-\dual{\nabla \uint_\TT}{\nabla (1-J_\TT)\uint_{\widehat{\TT}}}_{L^2(\Omega)}\\
  &\qquad  +  \dual{\phi_0+\phi_\TT}{(1-J_\TT)\uint_{\widehat{\TT}}}_{L^2(\Gamma)}\\
  &\qquad + \dual{(1/2- K)(u_0-\uint_\TT)- V\phi_\TT}{(1-\Pi_\TT)\psi_{\widehat{\TT}}}_{L^2(\Gamma)}.
  \end{split}
\end{align}
$\TT$-piecewise integration by parts shows
\begin{align*}
 \dual{&F}{(1-J_\TT)\uint_{\widehat{\TT}}}_{L^2(\Omega)}-\dual{\nabla \uint_\TT}{\nabla (1-J_\TT)\uint_{\widehat{\TT}}}_{L^2(\Omega)}+\dual{\phi_0+\phi_\TT}{(1-J_\TT)\uint_{\widehat{\TT}}}_{L^2(\Gamma)}\\
 &\lesssim\sum_{T\in\TT}\normLtwo{F+\Delta\uint_\TT}{T}\normLtwo{(1-J_\TT)\uint_{\widehat{\TT}}}{T}\\
 &\qquad+\sum_{T\in\TT}\Big(\normLtwo{[\partial_n\uint_\TT]}{\partial T\cap\Omega}
 + \normLtwo{\phi_0+\phi_\TT-\partial_n\uint_\TT }{\partial T\cap\Gamma}\Big)\normHeh{(1-J_\TT)\uint_{\widehat{\TT}}}{T}.
\end{align*}
All $T\in\TT$ with $T\notin \omega(\TT\setminus\widehat\TT,\TT)$ satisfy $((1-J_\TT)\uint_{\widehat{\TT}})|_T = 0$ by locality of the Scott-Zhang projection. This and the first-order approximation properties of $J_\TT$ imply
\begin{align}\label{eq:johnedhelp}
 \dual{&F}{(1-J_\TT)\uint_{\widehat{\TT}}}_{L^2(\Omega)}-\dual{\nabla \uint_\TT}{\nabla (1-J_\TT)\uint_{\widehat{\TT}}}_{L^2(\Omega)}+\dual{\phi_0-\phi_\TT}{(1-J_\TT)\uint_{\widehat{\TT}}}_{L^2(\Gamma)}\nonumber\\
 &\lesssim\sum_{T\in\omega(\TT\setminus\widehat\TT,\TT)}\Big({\rm diam}(T)\normLtwo{F+\Delta\uint_\TT}{T}+{\rm diam}(T)^{1/2} \normLtwo{[\partial_n\uint_\TT]}{\partial T\cap\Omega}\nonumber\\
 &\qquad+ {\rm diam}(T)^{1/2}\normLtwo{\phi_0+\phi_\TT-\partial_n\uint_\TT}{\partial T\cap\Gamma}\Big)\normLtwo{\nabla\uint_{\widehat{\TT}}}{T},
\end{align}
where the hidden constant depends only on the shape regularity of $\TT$ and $\Omega$.
Consider a partition of unity of $\Gamma$ in the sense
\begin{align*}
 \sum_{z\in\Gamma\atop z\text{ node of }\TT} \xi_z = 1\quad\text{on }\Gamma
\end{align*}
with the linear nodal hat functions $\xi_z\in\SS^1(\TT|_\Gamma)$.
Since $(1-\Pi_\TT)\psi_{\widehat{\TT}} = 0 $ on $\TT\cap\widehat\TT$, the last term on the right-hand side of~\eqref{eq:johneddrelstart} satisfies
\begin{align*}
  \dual{(1/2- K)(u_0-\uint_\TT)&- V\phi_\TT}{(1-\Pi_\TT)\psi_{\widehat{\TT}}}_{L^2(\Gamma)}\\
  &=
   \dual{\sum_{z\in\bigcup(\TT\setminus\widehat\TT)\cap\Gamma\atop z\text{ node of }\TT}\xi_z\big((1/2- K)(u_0-\uint_\TT)- V\phi_\TT\big)}{(1-\Pi_\TT)\psi_{\widehat{\TT}}}_{L^2(\Gamma)}.
\end{align*}
Setting $\vint=0$ and $\psi=1$ on $T$ and zero elsewhere in~\eqref{eq:solutions} shows $ \dual{1}{(1/2- K)(u_0-\uint_\TT)- V\phi_\TT}_{L^2(T\cap\Gamma)}=0$ for all $T\in\TT$. This allows us to follow the arguments of the proof of~\cite[Proposition~5.3]{fkmp} resp.~\cite[Proposition~4]{ffkmp:part1}. We obtain
\begin{align}\label{eq:johnedhelp2}
 \dual{(1/2&- K)(u_0-\uint_\TT)- V\phi_\TT}{(1-\Pi_\TT)\psi_{\widehat{\TT}}}_{L^2(\Gamma)}\\\nonumber
 &\lesssim \Big(\sum_{T\in\omega(\TT\setminus\widehat\TT,\TT)}{\rm diam}(T)^{1/2}\normLtwo{\nabla_\Gamma\big((1/2- K)(u_0-\uint_\TT)- V\phi_\TT\big)}{T\cap\Gamma}\Big)\normHmeh{\psi_{\widehat{\TT}}}{\Gamma}.
\end{align}
The combination of~\eqref{eq:johnedhelp}--\eqref{eq:johnedhelp2} with~\eqref{eq:johneddrelstart} and the inf-sup condition from~\cite{sayas,fembem} concludes the proof of the discrete reliability~(A4) with $\RR(\TT,\widehat\TT):=\omega(\TT\setminus\widehat\TT,\TT)$
and $C_{\rm ref}$ depending only on shape regularity. 
\end{proof}

Thus, the problem fits into the abstract setting of Section~\ref{sec:abstract}. All assumptions of Section~\ref{sec:proof} are verified above and hence Theorem~\ref{thm:opt} implies the following result.
\begin{theorem}
 Algorithm~\ref{alg:adaptive} for the non-symmetric FEM-BEM discretization of the Poisson transmission problem is rate-optimal in the sense~\eqref{eq:opt} {for all $0<\theta<\theta_\star$}.
\end{theorem}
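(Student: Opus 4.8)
The plan is to verify that the Johnson--N\'ed\'elec coupling formulation~\eqref{eq:weakfembem} is an instance of the abstract problem~\eqref{eq:weak} and then to apply Theorem~\ref{thm:opt} with no further work. First I would confirm continuity~\eqref{eq:cont} of $a(\cdot,\cdot)\colon\XX\times\XX\to\R$ on $\XX=\YY=H^1(\Omega)\times H^{-1/2}(\Gamma)$: the volume terms $\dual{\nabla\uint}{\nabla\vint}_\Omega$ and $\dual{\phi}{\vint}_\Gamma$ are bounded by Cauchy--Schwarz together with the trace theorem $H^1(\Omega)\to H^{1/2}(\Gamma)$, while the boundary terms $\dual{(1/2-K)\uint}{\psi}_\Gamma$ and $\dual{V\phi}{\psi}_\Gamma$ are controlled by the continuity of the integral operators $V\colon H^{-1/2}(\Gamma)\to H^{1/2}(\Gamma)$ and $K\colon H^{1/2}(\Gamma)\to H^{1/2}(\Gamma)$ and the $H^{1/2}$--$H^{-1/2}$ duality pairing; this yields a constant $C_a$ depending only on $\Gamma$. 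The uniform inf-sup stability~\eqref{eq:uniforminfsup} on $\XX$ and on the discrete spaces $\XX_\TT=\SS^1(\TT)\times\PP^0(\TT|_\Gamma)$ is exactly the stability result established by Sayas~\cite{sayas} for Lipschitz domains (and generalized in~\cite{fembem}), with a constant $\gamma>0$ uniform over $\T$; bijectivity of the associated operator then also gives the requirement that for every $v\in\YY\setminus\{0\}$ there is $u\in\XX$ with $a(u,v)\neq0$.

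Next I would record the remaining structural hypotheses: nestedness $\XX_{\ell+1}\supseteq\XX_\ell$ and $\YY_{\ell+1}\supseteq\YY_\ell$ holds because newest-vertex bisection only refines, density of $\bigcup_{\TT\in\T}\XX_\TT$ in $\XX$ follows from standard FEM/BEM approximation theory on adaptively refined meshes, and reliability of the residual estimator $\eta(\TT)$ was recalled above. Then I would simply point to the proofs carried out in the preceding subsection: (A1) and (A2) follow from the triangle inequality and the local inverse estimates for the nonlocal operators $V$ and $K$ from~\cite{invest} (as in~\cite[Theorem~25]{fembem}), and (A4) is the Scott--Zhang/$L^2(\Gamma)$-projection argument combined with the discrete inf-sup condition, giving $\RR(\TT,\widehat\TT)=\omega(\TT\setminus\widehat\TT,\TT)$ and $C_{\rm ref}$ depending only on shape regularity.

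Since continuity~\eqref{eq:cont}, uniform inf-sup stability~\eqref{eq:uniforminfsup}, and (A1), (A2), (A4) are precisely the hypotheses of Theorem~\ref{thm:opt}, that theorem applies verbatim and yields rate-optimality~\eqref{eq:opt} of Algorithm~\ref{alg:adaptive} for all $0<\theta<\theta_\star$, with $C_{\rm opt}$ depending only on the constants in (A1), (A2), (A4) and on $C_a$, $\gamma$, $d$, $\TT_0$. I expect the only point requiring genuine care to be bookkeeping about uniformity: one must ensure that $C_a$ and especially $\gamma$ are independent of $\TT\in\T$ (which is where the Sayas--type estimate and, if needed, a geometric condition on $\TT_0$ enter), because the relaxed quasi-orthogonality machinery of Sections~\ref{sec:qoN}--\ref{sec:growth} only produces $C(N)=\mathcal{O}(N^{1-\delta})$ — and hence linear convergence of $\eta_\ell$ — when the inf-sup constant in~\eqref{eq:Mnorm} does not degenerate along the adaptive sequence. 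Beyond assembling these already-established pieces, no new argument is needed.
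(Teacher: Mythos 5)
Your proposal is correct and follows essentially the same route as the paper: verify continuity and uniform inf-sup stability (via Sayas/\cite{fembem}), check (A1)--(A2) by invoking \cite[Theorem~25]{fembem} and the local inverse estimates of \cite{invest}, establish (A4) via the Scott--Zhang and $L^2(\Gamma)$-projection argument with $\RR(\TT,\widehat\TT)=\omega(\TT\setminus\widehat\TT,\TT)$, and then apply Theorem~\ref{thm:opt} verbatim. Your closing remark about tracking uniformity of $\gamma$ over $\T$ is exactly the right point to be careful about, and for this problem no geometric condition on $\TT_0$ is needed since Sayas's stability estimate is independent of the mesh.
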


\section{Application: Adaptive Time-stepping}\label{sec:time}
We apply the adaptive Algorithm~\ref{alg:adaptive} to a time dependent parabolic problem. We discretize the problem with the classical Crank-Nicolson scheme and choose the time steps adaptively. Note that we only consider adaptivity in time and deal with a fixed discretization in space.

The difference to classical adaptive time-stepping (see, e.g.,~\cite{soderlind,claes1} for examples) is that we do not choose the time steps based on local (or past) information, but incorporate the information on the whole time interval. This requires multiple passes through the domain with decreasing step-size but allows us to prove optimality. Linear convergence~\eqref{eq:linconv} ensures that the overhead produced by this remains bounded by the cost of the last iteration of the algorithm (see, e.g.,~\cite[Theorem~8]{optcost}).

While the result in Theorem~\ref{thm:timeopt} still requires a non-optimal CFL condition to hold, it seems to be the first optimality result for an adaptive time-stepping scheme.

\subsection{Model Problem}
Let $\AA\colon \VV\to \VV^\star$ denote a coercive (i.e., $\dual{\AA v}{v}\gtrsim \norm{v}{\VV}^2$ for all $v\in\VV$) and bounded operator on some finite dimensional Hilbert space $\VV$. For $\tend>0$, we consider
the parabolic equation
\begin{align}\label{eq:parabolic}
\begin{split}
 \partial_t u +\AA u &= f\quad\text{in } [0,\tend],\\
 u(0)&=u_0.
 \end{split}
\end{align}
The initial condition satisfies $u_0\in\HH$, where $\VV\subseteq \HH\subseteq \VV^\star$ is a Gelfand triple (in the present finite dimensional setting, $\HH$ and $\VV$ contain the same elements but are equipped with different norms, e.g., $L^2$ and $H^1$).
The equation is understood in the weak sense (see, e.g.,~\cite[Section~7.1]{evansPDE}), i.e., find $u\in \XX:=L^2(0,\tend;\VV)\cap H^1(0,\tend;\VV^\star)$ such that  all $(v,\phi)\in \YY:=L^2(0,\tend;\VV)\times \HH$
satisfy
\begin{align}\label{eq:weakparabolic}
 \int_0^\tend\dual{\partial_t u}{ v} + \dual{\AA u}{v}\,dt + \dual{u(0)}{\phi}= \int_0^\tend \dual{f}{v}\,dt+\dual{u_0}{\phi},
\end{align}
where $\dual{\cdot}{\cdot}$ is the duality bracket between $\VV$ and $\VV^\star$ and also the $\HH$-scalar product.
Here, $L^2(0,\tend;\ZZ)$ and $H^1(0,\tend;\ZZ)$ denote the usual Bochner spaces of $L^2/H^1$-functions mapping into a Hilbert space $\ZZ$.
This resembles the semi-discretization of a parabolic problem and we assume that $\VV$ is some kind of finite element space that satisfies the inverse inequality
\begin{align}\label{eq:assinv}
 \norm{v}{\VV}\lesssim h^{-2s} \norm{v}{\VV^\star}\quad\text{for all }v\in\VV
\end{align}
with some universal parameters $h,s>0$ (in the classical setting with $\AA=-\Delta$ and $\VV$ denoting an elementwise polynomial space, we have $s=1$ and $h$ being the mesh-size of $\VV$).

By $\TT$, we denote a partition of $[0,\tend]$ into compact intervals $T\in\TT$.
We define 
\begin{align*}
\XX_\TT&:=\set{v\in \XX}{v|_T\text{ is affine in }t\text{ for all }T\in\TT}.
\end{align*}
For brevity of presentation, we assume that $f\in L^2(0,\tend;\VV^\star)$ is piecewise affine on $\TT_0$.
Alternatively, we could introduce data oscillations and deal with general right-hand sides $f\in L^2(0,\tend;\VV^\star)$. 
We discretize~\eqref{eq:parabolic} by use of the Crank-Nicolson scheme, i.e., find $u_\TT\in\XX_\TT$ such that $u_\TT(0)=u_0$ and
\begin{align}\label{eq:implicitEuler}
 \frac{u_\TT(t_{i+1})-u_\TT(t_{i})}{|T_i|} +\AA u_\TT\big(\frac{t_{i+1}+t_i}{2}\big) = f\big(\frac{t_{i+1}+t_i}{2}\big),
\end{align}
for $T_i:=[t_i,t_{i+1}]\in\TT$ and all $i=1,\ldots, \#\TT$.
We use a standard residual error estimator of the form
\begin{align*}
 \eta_\TT^2=\sum_{T\in\TT} \eta_\TT(T)^2\quad\text{with}\quad\eta_\TT(T):=|T|\norm{\partial_t f -\partial_t\AA u_\TT}{L^2(T;\VV^\star)}.
\end{align*}
Mesh refinement is done by simple bisection of marked elements $T_i$ into descendants $T_{i,1}:=[t_i,(t_{i+1}+t_i)/2]$ and $T_{i,2}:=[(t_{i+1}+t_i)/2,t_{i+1}]$. We do not require any sort of mesh closure procedure or local quasi-uniformity of the meshes.

\subsection{Proof of the assumptions}
To verify the assumptions of Section~\ref{sec:abstract},
we embed the scheme into a Galerkin method. Note that this serves as a theoretical tool only as the Galerkin formulation below does
not need to be computed. We define the space
\begin{align*}
 \YY_\TT:=\set{v\in L^2(0,\tend;\VV)}{v|_T=\text{constant }\in\VV ,\, T\in\TT}\times \HH.
\end{align*}
This allows us to rewrite~\eqref{eq:implicitEuler} in the sense: Find $u_\TT\in\XX_\TT$ such that all $(v,\phi)\in\YY_\TT$ satisfy
\begin{align}\label{eq:discrete}
 a(u_\TT,(v,\phi)):=\int_0^\tend\dual{\partial_t u_\TT}{ v} + \dual{\AA u_\TT}{v}\,dt+\dual{u_\TT(0)}{\phi}= \int_0^\tend \dual{f}{v}\,dt+\dual{u_0}{\phi}.
\end{align}

\begin{lemma}\label{lem:equiv}
 The discretizations~\eqref{eq:implicitEuler} and~\eqref{eq:discrete} are equivalent.
\end{lemma}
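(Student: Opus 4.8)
The plan is to establish the two inclusions between the solution sets of~\eqref{eq:implicitEuler} and~\eqref{eq:discrete} by testing the variational formulation with suitable functions and, conversely, by reassembling the pointwise relations into the variational form. First I would observe that testing~\eqref{eq:discrete} with $(0,\phi)$ for arbitrary $\phi\in\HH$ yields $\dual{u_\TT(0)}{\phi}=\dual{u_0}{\phi}$, hence $u_\TT(0)=u_0$ because $\dual{\cdot}{\cdot}$ is the $\HH$-scalar product; conversely $u_\TT(0)=u_0$ immediately reproduces the $\phi$-contribution in~\eqref{eq:discrete}. It therefore remains to match the $v$-part of~\eqref{eq:discrete} with the pointwise identities in~\eqref{eq:implicitEuler}.

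To that end I would fix an index $i\in\{1,\dots,\#\TT\}$ and $w\in\VV$, and test~\eqref{eq:discrete} with $(v,0)$, where $v=w$ on $T_i=[t_i,t_{i+1}]$ and $v=0$ elsewhere (such $v$ lies in the first factor of $\YY_\TT$). The three relevant quadrature facts are: since $u_\TT|_{T_i}$ is affine in $t$, the derivative $\partial_t u_\TT$ is constant on $T_i$ and equals $(u_\TT(t_{i+1})-u_\TT(t_i))/|T_i|$, so $\int_{T_i}\dual{\partial_t u_\TT}{w}\,dt=\dual{u_\TT(t_{i+1})-u_\TT(t_i)}{w}$; linearity of $\AA$ makes $t\mapsto\AA u_\TT(t)$ affine on $T_i$, so the midpoint rule is exact and $\int_{T_i}\dual{\AA u_\TT}{w}\,dt=|T_i|\,\dual{\AA u_\TT((t_i+t_{i+1})/2)}{w}$; and since $f$ is affine on every element of $\TT_0$ and each $T\in\TT$ is contained in such an element ($\TT$ being a bisection refinement of $\TT_0$), $f|_{T_i}$ is affine, whence $\int_{T_i}\dual{f}{w}\,dt=|T_i|\,\dual{f((t_i+t_{i+1})/2)}{w}$. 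Plugging these into~\eqref{eq:discrete} and dividing by $|T_i|$ gives $\dual{g_i}{w}=0$ with $g_i:=(u_\TT(t_{i+1})-u_\TT(t_i))/|T_i|+\AA u_\TT((t_i+t_{i+1})/2)-f((t_i+t_{i+1})/2)\in\VV^\star$; since $w\in\VV$ was arbitrary and the duality pairing of the Gelfand triple $\VV\subseteq\HH\subseteq\VV^\star$ is non-degenerate, $g_i=0$, which is exactly~\eqref{eq:implicitEuler}.

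For the converse direction I would reverse this computation: starting from~\eqref{eq:implicitEuler} and $u_\TT(0)=u_0$, take any $(v,\phi)\in\YY_\TT$, write $v=\sum_i v_i\mathbf{1}_{T_i}$ with $v_i\in\VV$, pair the $i$-th relation of~\eqref{eq:implicitEuler} with $|T_i|v_i$, sum over $i$, and add $\dual{u_\TT(0)}{\phi}=\dual{u_0}{\phi}$; the same three identities read backwards collapse the left-hand side to $a(u_\TT,(v,\phi))$ and the right-hand side to $\int_0^\tend\dual{f}{v}\,dt+\dual{u_0}{\phi}$, giving~\eqref{eq:discrete}.

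The computation is elementary; the only points deserving care are the exactness of the midpoint quadrature for affine integrands (used for the $\AA u_\TT$- and $f$-terms) and the two facts that make these integrands affine on each $T\in\TT$, namely linearity of $\AA$ and that $\TT$ is a refinement of $\TT_0$ on whose elements $f$ is affine. I do not expect a genuine obstacle here.
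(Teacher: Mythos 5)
Your proof is correct and follows essentially the same route as the paper: test with $(0,\phi)$ for the initial condition, then use that $\partial_t u_\TT$ is piecewise constant while $\AA u_\TT$ and $f$ are piecewise affine on $\TT$ (the latter because $\TT$ refines $\TT_0$), so the integrals reduce to midpoint evaluations and the variational identity collapses elementwise to the Crank--Nicolson update. You spell out both inclusions separately and invoke non-degeneracy of the duality pairing a bit more explicitly, but these are just elaborations of the same chain of equalities used in the paper.
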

\begin{proof}
The initial condition is satisfied exactly since we may choose $v=0$ and $\phi\in\VV$.
 Note that the functions $u_\TT$ and $f$ are affine on each element $T\in\TT$. Thus, for $w\in\{\AA u_\TT,f\}$, $v_0\in\VV$, and $T\in\TT$, there holds $\int_0^\tend\dual{w}{v_0\chi_T}\,dt =\dual{w(\frac{t_{i+1}+t_i}{2})}{v_0}|T|$, where $\chi_T\colon [0,\tend]\to \R$ is the indicator function of $T$. Since $\partial_t u$ is constant on each $T\in\TT$, we obtain analogously $\int_0^\tend\dual{\partial_t u}{v_0\chi_T}\,dt =\dual{\partial_t u|_T}{v_0}|T|$. With $\partial_t u|_{T_i} = (u(t_{i+1})-u(t_i))/|T_i|$, this concludes the proof.
\end{proof}

We define $\Pi_\TT(\cdot)$ by $(\Pi_\TT v)|_T:=|T|^{-1}\int_T v\,dt$ for all $T\in\TT$ as the orthogonal projection onto piecewise constants.
Note that $\Pi_\TT$ can be regarded as the orthogonal projection $\Pi_\TT\colon L^2(0,\tend;\ZZ)\to \set{v\in L^2(0,\tend;\ZZ)}{v|_T={\rm constant},T\in\TT}$ for any Hilbert space $\ZZ$. 

\begin{lemma}\label{lem:infsup}
The bilinear form $a(\cdot,\cdot)$ is bounded in the sense of~\eqref{eq:cont} with uniform constant $C_a$ and uniformly inf-sup stable~\eqref{eq:uniforminfsup}, where the constant $\gamma>0$ depends only on $\Omega$, $t_{\rm end}$, and {a positive lower} bound on
 $h^{2s}/\max_{T\in{\TT_0}}|T|$. 
\end{lemma}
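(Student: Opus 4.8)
The plan is to verify the boundedness \eqref{eq:cont} first and then the two inf-sup bounds in \eqref{eq:uniforminfsup}, the discrete one being the heart of the matter. \textbf{Boundedness.} I would split $a(u_\TT,(v,\phi))$ into its three terms and estimate them by Cauchy--Schwarz between $L^2(0,\tend;\VV)$ and $L^2(0,\tend;\VV^\star)$ (first term), boundedness of $\AA$ (second term), and the continuous embedding $\XX\hookrightarrow C([0,\tend];\HH)$ applied to $\dual{u_\TT(0)}{\phi}$ (third term); this gives \eqref{eq:cont} with $C_a$ depending only on $\tend$ and the norm of $\AA$. The equal-dimension condition $\dim\XX_\TT=\dim\YY_\TT$ is immediate from counting nodes of the time partition ($(\#\TT+1)\dim\VV$ on both sides), and the non-degeneracy requirement for $\YY$ follows a posteriori from the discrete inf-sup together with the equal dimensions (and from well-posedness in the continuous case).

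\textbf{Continuous inf-sup.} Given $u\in\XX$, I would test with $(u,u(0))\in\YY$ and use $\dual{\partial_t u}{u}=\tfrac12\tfrac{d}{dt}\norm{u}{\HH}^2$ together with coercivity of $\AA$ to obtain, after discarding $\tfrac12\norm{u(\tend)}{\HH}^2\geq0$, the bound $a(u,(u,u(0)))\geq c\norm{u}{L^2(0,\tend;\VV)}^2+\tfrac12\norm{u(0)}{\HH}^2$. To also recover $\norm{\partial_t u}{L^2(0,\tend;\VV^\star)}$ I would test with $(v^\star,0)$, where $v^\star(t)$ is the Riesz representative in $\VV$ of $\partial_t u(t)\in\VV^\star$, so that $a(u,(v^\star,0))\geq\norm{\partial_t u}{L^2(0,\tend;\VV^\star)}^2-C\norm{u}{L^2(0,\tend;\VV)}\norm{\partial_t u}{L^2(0,\tend;\VV^\star)}$ and $\norm{v^\star}{L^2(0,\tend;\VV)}=\norm{\partial_t u}{L^2(0,\tend;\VV^\star)}$. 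Combining the two choices of test function with Young's inequality bounds $\norm{u}{\XX}$ by the supremum, which is the first estimate in \eqref{eq:uniforminfsup}.

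\textbf{Discrete inf-sup.} Fix $u_\TT\in\XX_\TT$ and put $S:=\sup_{(v,\phi)\in\YY_\TT}a(u_\TT,(v,\phi))/\norm{(v,\phi)}{\YY}$. The only new feature is that admissible test functions must be piecewise constant in time, so I would test with $(\Pi_\TT u_\TT,u_\TT(0))$. Since $\partial_t u_\TT$ is piecewise constant, it pairs identically with $\Pi_\TT u_\TT$ and with $u_\TT$ on each cell, hence $\int_0^\tend\dual{\partial_t u_\TT}{\Pi_\TT u_\TT}\,dt=\int_0^\tend\dual{\partial_t u_\TT}{u_\TT}\,dt=\tfrac12(\norm{u_\TT(\tend)}{\HH}^2-\norm{u_\TT(0)}{\HH}^2)$, and writing $\Pi_\TT u_\TT=u_\TT-(1-\Pi_\TT)u_\TT$, coercivity and boundedness of $\AA$ give $\int_0^\tend\dual{\AA u_\TT}{\Pi_\TT u_\TT}\,dt\geq c\norm{u_\TT}{L^2(0,\tend;\VV)}^2-C\norm{u_\TT}{L^2(0,\tend;\VV)}\norm{(1-\Pi_\TT)u_\TT}{L^2(0,\tend;\VV)}$. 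On each cell $T$ the function $(1-\Pi_\TT)u_\TT$ is the mean-free linear part, so $\norm{(1-\Pi_\TT)u_\TT}{L^2(T;\VV)}=\tfrac{|T|^{3/2}}{2\sqrt3}\norm{\partial_t u_\TT|_T}{\VV}$; applying the inverse inequality \eqref{eq:assinv} to the constant $\partial_t u_\TT|_T$ turns this into $C|T|\,h^{-2s}\norm{\partial_t u_\TT}{L^2(T;\VV^\star)}$, and summing over $T$ (using that bisecting a time interval only decreases $|T|$, so $\max_T|T|\leq\max_{T\in\TT_0}|T|$) gives $\norm{(1-\Pi_\TT)u_\TT}{L^2(0,\tend;\VV)}\leq C\mu_0\norm{\partial_t u_\TT}{L^2(0,\tend;\VV^\star)}$ with $\mu_0:=h^{-2s}\max_{T\in\TT_0}|T|$. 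The time-derivative norm is controlled exactly as in the continuous case via $(v^\star,0)$ --- admissible because the Riesz representative of the piecewise-constant $\partial_t u_\TT$ is again piecewise constant --- giving $\norm{\partial_t u_\TT}{L^2(0,\tend;\VV^\star)}\leq S+C\norm{u_\TT}{L^2(0,\tend;\VV)}$. Inserting these, using $\norm{(\Pi_\TT u_\TT,u_\TT(0))}{\YY}\leq(\norm{u_\TT}{L^2(0,\tend;\VV)}^2+\norm{u_\TT(0)}{\HH}^2)^{1/2}$ and Young's inequality, and assuming $\mu_0$ is below a threshold depending only on the coercivity and boundedness constants of $\AA$ --- equivalently, $h^{2s}/\max_{T\in\TT_0}|T|$ bounded below --- one absorbs the $\mu_0$-terms to get $\norm{u_\TT}{L^2(0,\tend;\VV)}^2+\norm{u_\TT(0)}{\HH}^2\lesssim S^2$, then $\norm{\partial_t u_\TT}{L^2(0,\tend;\VV^\star)}\lesssim S$, hence $\norm{u_\TT}{\XX}\lesssim S$; tracking constants yields $\gamma$ depending only on $\Omega$ (through $\AA$), $\tend$, and the lower bound on $h^{2s}/\max_{T\in\TT_0}|T|$.

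\textbf{Main obstacle.} The delicate point is the consistency error caused by replacing the piecewise-affine $u_\TT$ by its piecewise-constant average $\Pi_\TT u_\TT$ in the test function: this error carries the good factor $|T|$ but is measured in $\VV$, while the slope $\partial_t u_\TT$ is only controlled in $\VV^\star$; bridging the gap forces the spatial inverse inequality \eqref{eq:assinv} and hence the (non-optimal) CFL-type smallness of $h^{-2s}\max_T|T|$. Without such a restriction the $\Pi_\TT$-argument does not close, which is precisely why the lemma --- and Theorem~\ref{thm:timeopt} --- come with the CFL condition.
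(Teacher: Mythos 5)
Your proof is correct and follows the same overall strategy as the paper's: combine two test functions, one (a piecewise-constant projection of $u_\TT$ together with $u_\TT(0)$) to control $\norm{u_\TT}{L^2(0,\tend;\VV)}$ and $\norm{u_\TT(0)}{\HH}$, and one (a dual representative of $\partial_t u_\TT$, zero initial datum) to control $\norm{\partial_t u_\TT}{L^2(0,\tend;\VV^\star)}$, with the coupling error controlled via the inverse inequality~\eqref{eq:assinv}, which is where the CFL condition enters. The continuous inf-sup part the paper simply cites; yours is a correct self-contained sketch.

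Two organizational points differ and are worth noting, since your version is in one respect slightly cleaner. First, you test with $\Pi_\TT u_\TT$ on the \emph{current} mesh, whereas the paper tests with $\Pi_{\TT_0}v$ on the \emph{initial} mesh. Because $\partial_t u_\TT$ is piecewise constant on $\TT$ and $\Pi_\TT$ is the $L^2(0,\tend)$-orthogonal projection onto exactly that space, the pairing $\int\dual{\partial_t u_\TT}{\Pi_\TT u_\TT}\,dt=\int\dual{\partial_t u_\TT}{u_\TT}\,dt$ is \emph{exact} for you, so the only error term comes from $\int\dual{\AA u_\TT}{(1-\Pi_\TT)u_\TT}\,dt$. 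The paper instead makes the $\AA$-term exact via the commutation $\AA\Pi_{\TT_0}=\Pi_{\TT_0}\AA$ and self-adjointness of $\Pi_{\TT_0}$, obtaining $\int\dual{\AA v}{\Pi_{\TT_0}v}\,dt\simeq\norm{\Pi_{\TT_0}v}{L^2(\VV)}^2$, but then the error appears in the $\partial_t$-term $\int\dual{\partial_t v}{(1-\Pi_{\TT_0})v}\,dt$, and an extra triangle-inequality step is needed to convert $\norm{\Pi_{\TT_0}v}{L^2(\VV)}$ back to $\norm{v}{L^2(\VV)}$. Both errors are controlled by the same quantity $\norm{(1-\Pi)u_\TT}{L^2(\VV)}\lesssim h^{-2s}\max_T|T|\,\norm{\partial_t u_\TT}{L^2(\VV^\star)}$, and your observation $\max_{T\in\TT}|T|\leq\max_{T\in\TT_0}|T|$ gives the same uniform constant. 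Second, for the derivative test function you use the $\VV$-Riesz representative of $\partial_t u_\TT$, whereas the paper uses $\AA^{-T}\partial_t v$; the latter has the small side benefit that the cross term $\int\dual{\AA v}{\AA^{-T}\partial_t v}\,dt$ reduces exactly to $\int\dual{v}{\partial_t v}\,dt$ and telescopes, while your choice leaves a bounded but unstructured cross term that you then estimate by Cauchy--Schwarz. Both variants close with the same CFL threshold and the same dependence of $\gamma$ on a lower bound for $h^{2s}/\max_{T\in\TT_0}|T|$.
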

\begin{remark}
 Note that while~\cite[Theorem~3.1]{lubich} shows uniform discrete stability of the Crank-Nicolson scheme, this does not imply inf-sup stability~\eqref{eq:uniforminfsup}. The reason is that the mentioned result proves a bound of the form $\norm{u_\TT}{\XX}\lesssim\norm{u_0}{\HH}+ \norm{f}{L^2(0,T;\VV^\star)}$, while discrete inf-sup stability~\eqref{eq:uniforminfsup} implies the stronger bound $\norm{u_\TT}{\XX}\lesssim \norm{u_0}{\HH}+\norm{\Pi_\TT f}{L^2(0,T;\VV^\star)}$. Hence, we require the CFL condition to prove~\eqref{eq:uniforminfsup}.
\end{remark}

\begin{proof}[Proof of Lemma~\ref{lem:infsup}]
\emph{{Step~1:}}
Continuous inf-sup stability of the weak form~\eqref{eq:weakparabolic} is well-known (see, e.g.,~\cite[Theorem~5.1]{infsup}) and so is boundedness of $a(\cdot,\cdot)$  (this follows immediately from the fact that $\XX$ embeds into $C^0(0,\tend;\HH)$, see, e.g.,~\cite[Ch. XVIII, \textsection 1, Th.1]{lions}).

\emph{{Step~2:}} For discrete inf-sup stability~\eqref{eq:uniforminfsup}, {we fix $v\in\XX_\TT$ and  construct $(w_0,\phi)\in\YY_\TT$ such that
\begin{align}\label{eq:iddd}
   C_0 a(v,(w_0,\phi))&\geq 
   \norm{\partial_t v}{L^2(0,\tend;\VV^\star)}^2 +\frac12\norm{\phi}{\HH}^2\quad\text{and}\quad 
   \norm{w_0}{L^2(0,\tend;\VV)}\leq C_0 \norm{\partial_t v}{L^2(0,\tend;\VV^\star)}
  \end{align}
for some constant $C_0>0$ that depends only on $\AA$.}
To that end, define $\phi=v(0)$ and $w_0 = \AA^{-T}\partial_t v$ and note $(w_0,\phi)\in\YY_\TT$. Since $\AA\colon \VV\to\VV^\star$ is coercive and bounded, also $\AA^{-T}\colon \VV^\star\to\VV$ is coercive and bounded. This implies
  \begin{align*}
  \begin{split}
   a(v,(w_0,\phi))&=\int_0^\tend \dual{\partial_t v}{\AA^{-T}\partial_t v} + \dual{\AA v}{\AA^{-T}\partial_t v}\,dt + \norm{\phi}{\HH}^2\\
   &\gtrsim 
   \int_0^\tend \norm{\partial_t v}{\VV^\star}^2 + \dual{v}{\partial_t v}\,dt + \norm{\phi}{\HH}^2\\
   &=
   \int_0^\tend \norm{\partial_t v}{\VV^\star}^2 + \frac12\partial_t\norm{v}{\HH}^2\,dt + \norm{\phi}{\HH}^2
   \\
   &= \norm{\partial_t v}{L^2(0,\tend;\VV^\star)}^2 +\frac12\big( \norm{v(\tend)}{\HH}^2-\norm{v(0)}{\HH}^2\big)+\norm{\phi}{\HH}^2\\
   &\geq 
   \norm{\partial_t v}{L^2(0,\tend;\VV^\star)}^2 +\frac12\norm{\phi}{\HH}^2
   \end{split}
  \end{align*}
 and $\norm{w_0}{L^2(0,\tend;\VV)}=\norm{\AA^{-T}\partial_t v}{L^2(0,\tend;\VV)}\simeq \norm{\partial_t v}{L^2(0,\tend;\VV^\star)}$ and thus shows~\eqref{eq:iddd}.
 
 \emph{{Step~3:}} {To bound the remaining part of $\norm{v}{\XX}$,}
 {we construct $(w_1,\phi)\in\YY_\TT$ with $\phi$ as in Step~2 such that
\begin{align}\label{eq:idddd} 
 \begin{split}
   a(v,(w_1,0)) &\geq C_1^{-1}
   \norm{\Pi_{\TT_0} v}{L^2(0,\tend;\VV)}^2-\frac{\norm{\phi}{\HH}^2}{2}\\
   &\qquad -\norm{\partial_t v}{L^2(0,\tend;\VV^\star)}\norm{(1-\Pi_{\TT_0})v}{L^2(0,\tend;\VV)},\\
   \norm{w_1}{L^2(0,\tend;\VV)}&\leq \norm{v}{L^2(0,\tend;\VV)}
   \end{split}
  \end{align}
for some constant $C_1>0$ that depends only on $\AA$.
To that end, let $w_1=\Pi_{\TT_0}v$ and observe}
  \begin{align*}
   \int_0^{\tend}\dual{\partial_t v}{w_1}\,dt &=  \int_0^{\tend}\dual{\partial_t v}{v}\,dt -\int_0^{\tend}\dual{\partial_t v}{(1-\Pi_{\TT_0})v}\,dt\\
   &= 
   \int_0^{\tend}\frac12 \partial_t\norm{ v}{\HH}^2\,dt-\int_0^\tend \dual{\partial_t v}{(1-\Pi_{\TT_0})v}\,dt  \\
   &\geq \frac12\big(\norm{v(\tend)}{\HH}^2-\norm{v(0)}{\HH}^2\big)-\norm{\partial_t v}{L^2(0,\tend;\VV^\star)}\norm{(1-\Pi_{\TT_0})v}{L^2(0,\tend;\VV)}\\
   &\geq -\frac12\norm{\phi}{\HH}^2-\norm{\partial_t v}{L^2(0,\tend;\VV^\star)}\norm{(1-\Pi_{\TT_0})v}{L^2(0,\tend;\VV)}.
  \end{align*}
Moreover, there holds (note that $\AA$ and $\Pi_\TT$ commute since $\AA$ is time-independent)
\begin{align*}
   \int_0^{\tend}\dual{\AA v}{w_1}\,dt =
    \int_0^{\tend}\dual{\AA \Pi_{\TT_0} v}{\Pi_{\TT_0}v}\,dt \simeq \norm{\Pi_{\TT_0} v}{L^2(0,\tend;\VV)}^2.
\end{align*}
The combination of the two previous estimates shows
 \begin{align*}
 \begin{split}
   a(v,(w_1,0)) &= \int_0^{\tend}\dual{\partial_t v}{w_1}\,dt+ \int_0^{\tend}\dual{\AA v}{w_1}\,dt\\
   &\geq C_1^{-1}
   \norm{\Pi_{\TT_0} v}{L^2(0,\tend;\VV)}^2-\frac{\norm{\phi}{\HH}^2}{2}-\norm{\partial_t v}{L^2(0,\tend;\VV^\star)}\norm{(1-\Pi_{\TT_0})v}{L^2(0,\tend;\VV)},
   \end{split}
  \end{align*}
  where $C_1>0$ depends only on $\AA$. Moreover, there holds $\norm{w_1}{L^2(0,\tend;\VV)}\leq  \norm{v}{L^2(0,\tend;\VV)}$ and hence we prove~\eqref{eq:idddd}.
  
\emph{{Step~4:}} {To conclude the proof, we require a bound of the form $\norm{(1-\Pi_{\TT_0})v}{L^2(0,\tend;\VV)}\lesssim \norm{\partial_t v}{L^2(0,\tend;\VV^\star)}$.} To that end, we use the inverse inequality~\eqref{eq:assinv} together with the standard approximation properties of the $L^2$-orthogonal projection onto
piecewise constants (see e.g.~\cite{l2ortho} for the elementary result for vector valued functions) to estimate 
\begin{align}\label{eq:approxest}
 \frac{h^{2s}}{\max_{T\in{\TT_0}}|T|}\norm{(1-\Pi_{\TT_0}) v}{L^2(0,\tend;\VV)}\lesssim \frac{\norm{(1-\Pi_{\TT_0}) v}{L^2(0,\tend;\VV^\star)}}{\max_{T\in{\TT_0}}|T|} \lesssim \norm{\partial_t v}{L^2(0,\tend;\VV^\star)}.
\end{align}
This, together with~\eqref{eq:idddd} shows
\begin{align}\label{eq:idd}
 a(v,(w_1,0))\geq C_1^{-1}
   \norm{\Pi_{\TT_0} v}{L^2(0,\tend;\VV)}^2-\frac{\norm{\phi}{\HH}^2}{2}-C\norm{\partial_t v}{L^2(0,\tend;\VV^\star)}^2,
\end{align}
where $C>0$ depends only on the inverse inequality~\eqref{eq:assinv} and {a positive lower bound for $h^{2s}/\max_{T\in{\TT_0}}|T|$}.
By defining $w:= w_0+\alpha w_1$ with $\alpha:=\min\{{C_0^{-1}},\frac{{C_0^{-1}}}{2C}\}$, the combination of~\eqref{eq:iddd} and~\eqref{eq:idd} shows
\begin{align*}
 a(v,(w,\phi))&\geq C_0^{-1}\norm{\partial_t v}{L^2(0,\tend;\VV^\star)}^2 +\frac12(C_0^{-1}-\alpha)\norm{\phi}{\HH}^2\\
 &\qquad + C_1^{-1}\alpha
   \norm{\Pi_{\TT_0} v}{L^2(0,\tend;\VV)}^2-C\alpha\norm{\partial_t v}{L^2(0,\tend;\VV^\star)}^2
 \\
 &\geq \frac{C_0^{-1}}2\norm{\partial_t v}{L^2(0,\tend;\VV^\star)}^2 +C_1^{-1}\alpha
   \norm{\Pi_{\TT_0} v}{L^2(0,\tend;\VV)}^2.
\end{align*}
With~\eqref{eq:approxest}, we get 
\begin{align*}
\frac12\norm{v}{L^2(0,\tend;\VV)}^2&\leq \norm{(1-\Pi_{\TT_0})v}{L^2(0,\tend;\VV)}^2+\norm{\Pi_{\TT_0}v}{L^2(0,\tend;\VV)}^2\\
&\leq C'\norm{\partial_t v}{L^2(0,\tend;\VV^\star)}^2+ \norm{\Pi_{\TT_0}v}{L^2(0,\tend;\VV)}^2,
\end{align*}
where $C'>0$ depends only on the constant in~\eqref{eq:assinv} and {a positive lower bound for $h^{2s}/\max_{T\in{\TT_0}}|T|$}.
The combination of the last two estimates concludes 
\begin{align*}
\norm{v}{\XX}^2&=\norm{v}{L^2(0,\tend;\VV)}^2 + \norm{\partial_tv}{L^2(0,\tend;\VV^\star)}^2 \\
&\leq
(2C'+1)\norm{\partial_t v}{L^2(0,\tend;\VV^\star)}^2+ 2\norm{\Pi_{\TT_0}v}{L^2(0,\tend;\VV)}^2\\
&\leq 
(2C'+2)\Big(\norm{\partial_t v}{L^2(0,\tend;\VV^\star)}^2+ \norm{\Pi_{\TT_0}v}{L^2(0,\tend;\VV)}^2\Big)\\
&\leq (2C'+2)\max\{2C_0,C_1/\alpha\}
\,a(v,(w,\phi)).
\end{align*}
Moreover, we have $\norm{w}{L^2(0,\tend;\VV)}\lesssim \norm{v}{\XX}$ and hence $\norm{(w,\phi)}{\YY}\lesssim \norm{v}{\XX}$, which concludes the proof.
\end{proof}

The residual error estimator $\eta_\TT$ is very simple and the proofs of this section follow well known paths.
\begin{lemma}\label{lem:rel}
Given a mesh $\TT\in \T$, there holds reliability
\begin{align*}
 \norm{u-u_\TT}{\XX}\leq C_{\rm rel}\eta_\TT.
\end{align*}
Given a refinement $\widehat\TT$ of $\TT$, there holds
\begin{align*}
 \norm{u_{\widehat\TT}-u_\TT}{\XX}\leq C_{\rm drel}\Big(\sum_{T\in\TT\setminus\widehat\TT}\eta_\TT(T)^2\Big)^{1/2},
\end{align*}
where $C_{\rm rel},C_{\rm drel}>0$ depend only on $\Omega$, $\tend$ and  $C_{\rm drel}$ depends additionally on a positive lower bound for
 $h^{2s}/\max_{T\in{\TT_0}}|T|$.
\end{lemma}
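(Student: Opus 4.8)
The plan is to obtain both bounds from the inf-sup stability of Lemma~\ref{lem:infsup} together with an explicit description of the residual. Write $r_\TT := f - \partial_t u_\TT - \AA u_\TT \in L^2(0,\tend;\VV^\star)$. Since the Crank--Nicolson scheme~\eqref{eq:implicitEuler} reproduces the initial value exactly, $u_\TT(0)=u_0$, the residual of the Galerkin formulation~\eqref{eq:discrete} reduces to $f(v,\phi)-a(u_\TT,(v,\phi))=\int_0^\tend\dual{r_\TT}{v}\,dt$ for every $(v,\phi)\in\YY$, the $\HH$-contribution dropping out. Using $a(u,(v,\phi))=f(v,\phi)$ for the exact solution (cf.~\eqref{eq:weakparabolic}), the reliability estimate then follows once we show $\norm{r_\TT}{L^2(0,\tend;\VV^\star)}\lesssim\eta_\TT$ and invoke the continuous inf-sup bound from Lemma~\ref{lem:infsup}, whose constant depends only on $\Omega$ and $\tend$.

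The decisive observation is that $r_\TT$ has bubble structure on $\TT$. Because $f$ is piecewise affine on $\TT_0$ — hence on every $\TT\in\T$, each element of $\TT$ lying in one of $\TT_0$ — and $u_\TT$ is piecewise affine on $\TT$, the map $t\mapsto\AA u_\TT(t)$ is piecewise affine on $\TT$, so $r_\TT|_T$ is affine for each $T=[t_i,t_{i+1}]\in\TT$; and~\eqref{eq:implicitEuler} forces it to vanish at the midpoint $m_T:=(t_i+t_{i+1})/2$. Hence $r_\TT|_T(t)=(t-m_T)\,c_T$ with $c_T:=(\partial_t r_\TT)|_T\in\VV^\star$, so $\int_T r_\TT\,dt=0$ (giving $\Pi_\TT r_\TT=0$) and
\begin{align*}
 \norm{r_\TT}{L^2(T;\VV^\star)}^2=\norm{c_T}{\VV^\star}^2\int_T(t-m_T)^2\,dt=\tfrac{|T|^3}{12}\,\norm{c_T}{\VV^\star}^2=\tfrac1{12}\,\eta_\TT(T)^2,
\end{align*}
where the last equality uses $\eta_\TT(T)=|T|\,\norm{\partial_t f-\partial_t\AA u_\TT}{L^2(T;\VV^\star)}=|T|\,\norm{\partial_t r_\TT}{L^2(T;\VV^\star)}=|T|^{3/2}\norm{c_T}{\VV^\star}$. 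Summing over $T\in\TT$ yields $\norm{r_\TT}{L^2(0,\tend;\VV^\star)}=\eta_\TT/\sqrt{12}$ and, with the continuous inf-sup constant $\gamma$, reliability with $C_{\rm rel}=1/(\gamma\sqrt{12})$.

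For discrete reliability I would use Galerkin orthogonality of the coarse solution. Fix $(v,\phi)\in\YY_{\widehat\TT}$; since $\Pi_\TT v$ is $\TT$-piecewise constant and $\VV$-valued, $(\Pi_\TT v,\phi)\in\YY_\TT$, so~\eqref{eq:discrete} on $\TT$ gives $a(u_\TT,(\Pi_\TT v,\phi))=f(\Pi_\TT v,\phi)$. Subtracting this from the $\widehat\TT$-equation for $u_{\widehat\TT}$ and cancelling the $\HH$-parts leaves
\begin{align*}
 a(u_{\widehat\TT}-u_\TT,(v,\phi))=\int_0^\tend\dual{r_\TT}{v-\Pi_\TT v}\,dt.
\end{align*}
As $v$ is $\widehat\TT$-piecewise constant, $(v-\Pi_\TT v)|_T=0$ on every non-refined element $T\in\TT\cap\widehat\TT$, so the integral runs only over $T\in\TT\setminus\widehat\TT$; an element-wise Cauchy--Schwarz step, the bound $\norm{\Pi_\TT v}{L^2(T;\VV)}\le\norm{v}{L^2(T;\VV)}$, and the identity above bound the right-hand side by $\tfrac2{\sqrt{12}}\big(\sum_{T\in\TT\setminus\widehat\TT}\eta_\TT(T)^2\big)^{1/2}\norm{(v,\phi)}{\YY}$. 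Discrete inf-sup stability~\eqref{eq:uniforminfsup} — valid on $\XX_{\widehat\TT}\times\YY_{\widehat\TT}$ since $\widehat\TT$ refines $\TT_0$ — then gives the claim with $C_{\rm drel}=2/(\gamma\sqrt{12})$, the extra dependence on a lower bound for $h^{2s}/\max_{T\in\TT_0}|T|$ entering only through the discrete inf-sup constant from Lemma~\ref{lem:infsup}.

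I do not expect a real obstacle: every step is either an inf-sup estimate or an elementary calculation. The points to get right are the bookkeeping of which inf-sup constant is used where — the $\TT_0$-independent continuous one for reliability, the CFL-dependent discrete one for discrete reliability — and the bubble form of $r_\TT$, which rests on the standing assumption that $f$ is piecewise affine on $\TT_0$. Dropping that assumption would force one to split off a data-oscillation term comparing $\partial_t f$ with its $\TT$-piecewise mean, making the algebra heavier without changing anything essential.
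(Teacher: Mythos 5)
Your proof is correct and follows essentially the same route as the paper's: exploit that the Crank--Nicolson residual $r_\TT$ vanishes at each element midpoint to relate $\norm{r_\TT}{L^2(T;\VV^\star)}$ to $\eta_\TT(T)$, then combine a Galerkin-orthogonality/localization step with the inf-sup bounds of Lemma~\ref{lem:infsup}. The only cosmetic differences are that you compute the constant $1/\sqrt{12}$ exactly from the affine structure of $r_\TT$ where the paper uses a generic $\Pi_\TT r_\TT=0$ approximation estimate, you use $\Pi_\TT\widehat v$ as the coarse test function where the paper takes the restriction of $\widehat v$ to non-refined elements (both localize to $\TT\setminus\widehat\TT$; the projection choice even makes the factor $2$ in $\norm{v-\Pi_\TT v}{L^2(T;\VV)}\le 2\norm{v}{L^2(T;\VV)}$ unnecessary, as $\Pi_\TT$ is an orthogonal projection), and you argue reliability directly from the continuous inf-sup rather than by reference to the discrete-reliability argument.
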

\begin{proof}
Let $(\widehat v,\phi)\in\YY_{\widehat\TT}$ and $(v,\phi)\in\YY_\TT$ with $\phi\in\HH$. There holds with~\eqref{eq:discrete}
\begin{align*}
 a(u_{\widehat\TT}-u_{\TT},(\widehat v,\phi)) &= a(u_{\widehat\TT}-u_{\TT},(\widehat v-v,0))
 =
 \int_0^\tend\dual{f-\partial_t u_\TT - \AA u_\TT }{\widehat v-v}\,dt.
\end{align*}
We may choose $v=\widehat v$ on all $T\in\TT\cap\widehat\TT$ and zero elsewhere. This implies
\begin{align*}
  a(u_{\widehat\TT}-u_{\TT},(\widehat v,\phi)) \leq
  \Big(\sum_{T\in\TT\setminus\widehat\TT} \norm{f-\partial_t u_\TT - \AA u_\TT }{L^2(T;\VV^\star)}^2\Big)^{1/2}\norm{\widehat v}{L^2(0,\tend,\VV)}.
\end{align*}
By definition of $u_\TT$ in~\eqref{eq:implicitEuler}, there holds $\Pi_\TT(f-\partial_t u_\TT - \AA u_\TT)|_{T_i}=(f-\partial_t u_\TT - \AA u_\TT)((t_{i+1}+t_i)/2)=0$ for all $i=1,\ldots,\#\TT$.
Hence, we have for all $T\in\TT$
\begin{align*}
  \norm{f-\partial_t u_\TT - \AA u_\TT }{L^2(T;\VV^\star)}
  \lesssim |T| \norm{\partial_t f - \partial_t\AA u_\TT }{L^2(T;\VV^\star)}.
\end{align*}
Taking the supremum over all $\widehat v$ and Lemma~\ref{lem:infsup} conclude the proof of discrete reliability. The proof of reliability follows by exactly the same arguments when we replace $u_{\widehat\TT}$ with $u$ (and may use the continuous inf-sup stability instead of the discrete one, thus avoiding the mesh condition).
\end{proof}

\begin{lemma}\label{lem:estred}
Let $\TT\in\T$ and let $\widehat\TT$ be a refinement of $\TT$. Then, the error estimator satisfies
\begin{align*}
 \sum_{T\in\widehat\TT\setminus\TT}\eta_{\widehat\TT}(T)^2\leq q\sum_{T\in\TT\setminus\widehat\TT} \eta_{\TT}(T)^2 + C \norm{u_{\widehat\TT}-u_{\TT}}{L^2(0,\tend;\VV)}^2
\end{align*}
as well as
\begin{align*}
  \Big|\Big(\sum_{T\in\widehat\TT\cap\TT}\eta_{\widehat\TT}(T)^2\Big)^{1/2}-\Big(\sum_{T\in\TT\cap\widehat\TT}\eta_{\TT}(T)^2\Big)^{1/2}\Big|\leq  C \norm{u_{\widehat\TT}-u_{\TT}}{L^2(0,\tend;\VV)},
\end{align*}
where the constants $1/4<q <1$ and $C>0$ depend only on $\AA$.
\end{lemma}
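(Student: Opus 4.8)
The plan is to exploit the piecewise-affine structure of $u_\TT$, $u_{\widehat\TT}$, and $f$. Since $f$ is affine on each element of $\TT_0$ and $u_\TT$ is affine on each $T\in\TT$, the residual $(\partial_t f-\partial_t\AA u_\TT)|_T$ is a \emph{constant} element $R_T\in\VV^\star$ for every $T\in\TT$; hence $\eta_\TT(T)^2=|T|^3\norm{R_T}{\VV^\star}^2$, and analogously $\eta_{\widehat\TT}(\widehat T)^2=|\widehat T|^3\norm{\widehat R_{\widehat T}}{\VV^\star}^2$ with the constant residual $\widehat R_{\widehat T}:=(\partial_t f-\partial_t\AA u_{\widehat\TT})|_{\widehat T}$. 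For $\widehat T\in\widehat\TT\setminus\TT$ there is a unique parent $T\in\TT\setminus\widehat\TT$ with $\widehat T\subsetneq T$ and $|\widehat T|\le |T|/2$; since $\partial_t f$ and $\partial_t\AA u_\TT$ agree on $\widehat T$ with their (constant) values on $T$, and $\AA$ is time-independent, one obtains the identity $\widehat R_{\widehat T}-R_T=\AA\,\partial_t(u_\TT-u_{\widehat\TT})|_{\widehat T}$.

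The crucial auxiliary tool is an inverse estimate in time: if $w$ is affine on an interval $\widehat T$ with values in the Hilbert space $\VV$, a scaling argument on the two-dimensional space of affine $\VV$-valued functions gives $\norm{\partial_t w}{L^2(\widehat T;\VV)}\le c\,|\widehat T|^{-1}\norm{w}{L^2(\widehat T;\VV)}$ with an \emph{absolute} constant $c$ (the Hilbert-space structure makes $c$ independent of $\VV$). Writing $w:=u_\TT-u_{\widehat\TT}$, which is affine on each $\widehat T\in\widehat\TT$, and $C_\AA$ for the boundedness constant of $\AA\colon\VV\to\VV^\star$, the identity from the first step together with the triangle inequality yields $\eta_{\widehat\TT}(\widehat T)\le |\widehat T|^{3/2}\norm{R_T}{\VV^\star}+C_\AA\,|\widehat T|\,\norm{\partial_t w}{L^2(\widehat T;\VV)}\le |\widehat T|^{3/2}\norm{R_T}{\VV^\star}+c\,C_\AA\,\norm{w}{L^2(\widehat T;\VV)}$; the point is that the weight $|\widehat T|$ carried by the estimator exactly cancels the factor $|\widehat T|^{-1}$ produced by the inverse estimate, so no CFL/mesh condition and no $\tend$ enter the constants here (in contrast to Lemmas~\ref{lem:infsup} and~\ref{lem:rel}). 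A Young inequality with parameter $\delta>0$ then gives $\eta_{\widehat\TT}(\widehat T)^2\le(1+\delta)\,|\widehat T|^3\norm{R_T}{\VV^\star}^2+(1+\delta^{-1})\,c^2C_\AA^2\,\norm{w}{L^2(\widehat T;\VV)}^2$.

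Finally I would sum over $\widehat T\in\widehat\TT\setminus\TT$, grouping the summands by their parent $T\in\TT\setminus\widehat\TT$. Using that the children of a refined $T$ partition $T$ and each has length $\le|T|/2$, one has $\sum_{\widehat T\subseteq T}|\widehat T|^3\le(\max_{\widehat T\subseteq T}|\widehat T|)^2\sum_{\widehat T\subseteq T}|\widehat T|\le|T|^3/4$, while $|T|^3\norm{R_T}{\VV^\star}^2=\eta_\TT(T)^2$; together with $\sum_{\widehat T\in\widehat\TT\setminus\TT}\norm{w}{L^2(\widehat T;\VV)}^2\le\norm{w}{L^2(0,\tend;\VV)}^2$ this yields the first inequality of the lemma with $q:=(1+\delta)/4$ and $C:=(1+\delta^{-1})c^2C_\AA^2$; choosing $\delta\in(0,3)$ sufficiently small gives $1/4<q<1$ as claimed, and all constants depend only on $\AA$. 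The stability (second) estimate follows from the same residual-difference identity applied on $T\in\TT\cap\widehat\TT$, where both $u_\TT$ and $u_{\widehat\TT}$ are affine on $T$, giving $|\eta_{\widehat\TT}(T)-\eta_\TT(T)|\le C_\AA\,|T|\,\norm{\partial_t w}{L^2(T;\VV)}\le c\,C_\AA\,\norm{w}{L^2(T;\VV)}$, combined with the reverse triangle inequality in $\ell^2$ and summation over $T\in\TT\cap\widehat\TT$. I expect the only nontrivial point to be the time-inverse estimate together with the bookkeeping showing that its $|\widehat T|^{-1}$ factor is precisely absorbed by the estimator weight, and the check that the built-in reduction factor $1/4$ from bisection survives the Young perturbation so that $q<1$.
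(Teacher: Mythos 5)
Your proof is correct and follows essentially the same route as the paper's: split $\partial_t f-\partial_t\AA u_{\widehat\TT}$ into $(\partial_t f-\partial_t\AA u_{\TT})-\partial_t\AA(u_{\widehat\TT}-u_{\TT})$, apply Young's inequality, harvest the reduction factor from the halving of element size, and bound the perturbation term by an inverse inequality in time that trades $\partial_t$ for a factor $|\widehat T|^{-1}$ which is absorbed by the estimator weight. One point where you are actually more careful than the paper: the paper's proof writes down only the one-level case ($T$ split into exactly two halves $T_1,T_2$ with $|T_1|=|T_2|=|T|/2$), whereas axiom (A2) is needed for arbitrary refinements $\widehat\TT$ of $\TT$; your grouping of children by parent and the estimate $\sum_{\widehat T\subseteq T}|\widehat T|^3\le(\max_{\widehat T\subseteq T}|\widehat T|)^2\sum_{\widehat T\subseteq T}|\widehat T|\le|T|^3/4$ covers the general case cleanly. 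The rest — observing the residuals are element-wise constant by the assumption that $f$ is $\TT_0$-piecewise affine, using time-independence of $\AA$ to commute with $\partial_t$, noting the inverse estimate for affine $\VV$-valued functions has an absolute constant, and getting the second (stability) bound via the reverse triangle inequality on each $T\in\TT\cap\widehat\TT$ — matches the paper. A minor cosmetic remark: the paper writes $\norm{\AA(u_{\widehat\TT}-u_\TT)}{L^2(T;\VV^\star)}\simeq\norm{u_{\widehat\TT}-u_\TT}{L^2(T;\VV)}$, but only the upper bound (boundedness of $\AA$) is actually used, exactly as in your write-up.
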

\begin{proof}
 Let $T\in\TT\setminus\widehat\TT$ and let $T_1,T_2\in\widehat\TT$ with $T_1\cup T_2=T$ and $|T_1|=|T_2|=|T|/2$. There holds for all $\delta>0$ that
 \begin{align*}
  \eta_{\widehat\TT}(T)^2 &= |T_1|^2\norm{\partial_t f -\partial_t\AA u_{\widehat\TT}}{L^2(T;\VV^\star)}^2\\
  &\leq \frac14 |T|^2(1+\delta)\norm{\partial_t f -\partial_t\AA u_{\TT}}{L^2(T;\VV^\star)}^2+
  (1+\delta^{-1})|T_1|^2\norm{ \partial_t\AA (u_{\widehat\TT}-u_{\TT})}{L^2(T;\VV^\star)}^2.
 \end{align*}
Since $\AA (u_{\widehat\TT}-u_{\TT})$ is piecewise linear in $\widehat\TT$, there holds
\begin{align*}
 |T_1|\norm{\partial_t\AA (u_{\widehat\TT}-u_{\TT})}{L^2(T;\VV^\star)}\lesssim
 \norm{\AA (u_{\widehat\TT}-u_{\TT})}{L^2(T;\VV^\star)}\simeq \norm{u_{\widehat\TT}-u_{\TT}}{L^2(T;\VV)}.
\end{align*}
This shows
\begin{align*}
  \eta_{\widehat\TT}(T)^2 \leq (1+\delta)\frac{1}{4} \eta_\TT^2(T) + C_\delta \norm{u_{\widehat\TT}-u_{\TT}}{L^2(T;\VV)}^2.
\end{align*}
Summing up over all $T\in\widehat\TT\setminus\TT$ concludes the proof of the first statement. The second statement follows analogously.
\end{proof}

\begin{theorem}\label{thm:timeopt}
 Algorithm~\ref{alg:adaptive} for the time discretization of the parabolic problem~\eqref{eq:parabolic} is optimal~\eqref{eq:opt} {for all $0<\theta<\theta_\star$}.
\end{theorem}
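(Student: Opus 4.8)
The plan is to recognize the Crank--Nicolson time-stepping scheme as an instance of the abstract setting of Section~\ref{sec:abstract} and then simply invoke Theorem~\ref{thm:opt}. Concretely, I take $\XX=L^2(0,\tend;\VV)\cap H^1(0,\tend;\VV^\star)$ and $\YY=L^2(0,\tend;\VV)\times\HH$ as the Hilbert spaces and $\XX_\TT,\YY_\TT$ as the discrete subspaces defined in Section~\ref{sec:time}; these are finite-dimensional, closed, and nested under refinement, and a dimension count (one affine function per interval with shared endpoint values versus one constant per interval plus a copy of $\HH\cong\VV$) gives $\dim\XX_\TT=(\#\TT+1)\dim\VV=\dim\YY_\TT$. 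The underlying ``domain'' is the time interval $[0,\tend]\subset\R$, so $d=1$, and the admissible meshes $\T$ are those obtained from $\TT_0$ by iterated bisection of intervals; this is the one-dimensional specialization of newest-vertex bisection and satisfies the purely combinatorial ingredients used in the optimality proof of~\cite{axioms} (the overlay estimate and the mesh-closure estimate) with constants that do not degenerate --- in fact in 1D no closure step is needed, so $\#\TT_{\ell+1}-\#\TT_0=\sum_{j\le\ell}\#\MM_j$. Finally, Lemma~\ref{lem:equiv} identifies the Galerkin solution of~\eqref{eq:discrete} (i.e.\ of~\eqref{eq:weak} in this concrete case) with the Crank--Nicolson iterate~\eqref{eq:implicitEuler}, so the estimator $\eta_\TT$ is genuinely computed from the output of Algorithm~\ref{alg:adaptive}.

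Next I check the standing hypotheses. Boundedness~\eqref{eq:cont} and uniform inf-sup stability~\eqref{eq:uniforminfsup} of $a(\cdot,\cdot)$ over $\XX_\TT\times\YY_\TT$ are exactly the content of Lemma~\ref{lem:infsup}. The only subtlety is the role of the CFL condition: the inf-sup constant there depends on a positive lower bound for $h^{2s}/\max_{T\in\TT_0}|T|$, and since bisection only shrinks intervals we have $\max_{T\in\TT}|T|\le\max_{T\in\TT_0}|T|$ for every $\TT\in\T$, so the \emph{same} lower bound --- hence the \emph{same} $\gamma$ --- is valid uniformly over all $\TT\in\T$. The nondegeneracy requirement (``for every $v\in\YY\setminus\{0\}$ there is $u\in\XX$ with $a(u,v)\ne0$'') follows from well-posedness of the continuous problem~\eqref{eq:weakparabolic} invoked in Step~1 of the proof of Lemma~\ref{lem:infsup}, and density of $\bigcup_{\TT\in\T}\XX_\TT$ in $\XX$ is elementary since $\VV$ is finite-dimensional and continuous piecewise-affine functions are dense in $H^1(0,\tend)$. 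Thus the C\'ea estimate~\eqref{eq:cea} is available as well.

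It then remains to verify (A1), (A2), (A4). For (A4) I take $\RR(\TT,\widehat\TT):=\TT\setminus\widehat\TT$, so that $C_{\rm ref}=1$ and $\TT\setminus\widehat\TT\subseteq\RR(\TT,\widehat\TT)$ trivially; squaring the second estimate of Lemma~\ref{lem:rel} then yields discrete reliability with $C_{\rm dlr}=C_{\rm drel}$. For (A1) and (A2) I use Lemma~\ref{lem:estred} together with the trivial bound $\norm{\cdot}{L^2(0,\tend;\VV)}\le\norm{\cdot}{\XX}$: the first estimate of Lemma~\ref{lem:estred} is (A2) with $q_{\rm red}=q<1$ and $C_{\rm red}=C$, and the second (which holds elementwise for non-refined intervals, hence for every subset $\SS\subseteq\TT\cap\widehat\TT$ after summing) is (A1) with $C_{\rm stab}=C$. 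Reliability $\norm{u-u_\TT}{\XX}\le C_{\rm rel}\eta_\TT$ is moreover already available from the first estimate of Lemma~\ref{lem:rel}, consistent with its internal derivation in~\cite{axioms} from (A4), \eqref{eq:cea} and density. With all hypotheses of Theorem~\ref{thm:opt} in force, rate-optimality~\eqref{eq:opt} of Algorithm~\ref{alg:adaptive} for all $0<\theta<\theta_\star$ follows immediately.

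I do not expect a genuine obstacle in this last theorem --- all the substance sits in Lemmas~\ref{lem:infsup}--\ref{lem:estred}. The one point deserving a sentence of care is that the abstract framework of~\cite{axioms} is stated for newest-vertex bisection of a $d$-dimensional simplicial mesh, whereas here we bisect one-dimensional time intervals; one should remark (or quickly check) that the combinatorial refinement properties used in~\cite{axioms} transfer verbatim to 1D bisection, which they do, with dimension-independent constants.
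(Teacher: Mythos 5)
Your proposal is correct and follows essentially the same route as the paper: both reduce to the abstract framework by verifying (A1), (A2), (A4) via Lemmas~\ref{lem:rel}--\ref{lem:estred}, invoking Lemma~\ref{lem:infsup} (noting the CFL quantity is stated in terms of $\TT_0$, hence uniform over refinements), observing that 1D interval bisection is a special case of newest-vertex bisection, and then applying Theorem~\ref{thm:opt}. Your version simply spells out the bookkeeping (dimension count, choice $\RR=\TT\setminus\widehat\TT$, the trivial bound $\norm{\cdot}{L^2(0,\tend;\VV)}\le\norm{\cdot}{\XX}$) that the paper leaves implicit.
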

\begin{proof}
Note that the 1D bisection we use as a mesh refinement is technically newest-vertex-bisection for intervals and hence fits
into our abstract framework (see also~\cite[Sections~2.4--2.5]{axioms} for details).
{Note that $h^{2s}/\max_{T\in{\TT_0}}|T|>0$ due to $h>0$ and $\tend<\infty$.
Thus, Lemma~\ref{lem:infsup} shows~\eqref{eq:uniforminfsup}.}
Lemmas~\ref{lem:rel}--\ref{lem:estred} show (A1), (A2), and~(A4). Hence, the problem fits into the abstract framework of Section~\ref{sec:abstract} and Theorem~\ref{thm:opt} implies the statement. 
\end{proof}
\begin{remark}
 Obviously, the CFL condition in the theorem above is not optimal and is an artifact of the proof. If one replaces the Crank-Nicolson scheme with a $\theta$-scheme for $\theta>1/2$, one could remove the CFL condition, but has to construct nested test spaces $\YY_\TT$ that emulate the time-stepping scheme. Currently, we do not know how to do this. As the numerical experiment in the next section shows, the violation of the CFL condition does not seem to influence the convergence behavior.
\end{remark}
\subsection{Numerical experiment}
While the applications in Sections~\ref{sec:stokes}--\ref{sec:fembem} are explored quite extensively by means of numerical experiments (see, e.g.,~\cite{fembem}), we are not aware of a numerical test of the adaptive time-stepping algorithm proposed in this section. Therefore, we consider the heat equation on $\Omega=[0,1]^2$ with $\tend=1$, i.e.,
\begin{align*}
 \partial_t u -\Delta u = 0\quad\text{and}\quad u|_{\partial\Omega} =0
\end{align*}
with initial condition  $u(0)=1$. We semi-discretize the problem with a lowest order finite element method on a uniform grid with $\approx 2\cdot 10^3$ elements (using the Matlab FEM package P1AFEM~\cite{p1afem}). Here $\VV=\SS^1_0(\TT)$ is the lowest order finite element space equipped with the $H^1(\Omega)$-norm and $\HH$ is equipped with the $L^2(\Omega)$-norm. We project the non-matching initial condition onto the finite element space {with respect to the scalar product $\dual{\cdot}{\cdot}+\frac{|T_0|}{2}\dual{\AA\cdot}{\cdot}$. (This is the natural projection considering that we aim to minimize the error in the initial condition w.r.t. $\norm{\cdot}{L^2(D)}$ as well as in the first time step w.r.t. $\sqrt{|T_0|}\norm{\nabla\cdot}{L^2(D)}$)} and confirm that the operator $\AA:=-\Delta\colon \VV\to \VV^\star$ is bounded and elliptic. We run Algorithm~\ref{alg:adaptive} and plot the error as well as the estimator in Figure~\ref{fig:timeconv}. Since the exact solution is unknown, we compare the current approximation with the approximation on the finest grid. We observe convergence with rate $\mathcal{O}(\#\TT^{-1})$. Note that this is the expected rate as $\partial_t u$ appears in the norm of $\XX$ and is approximated by the piecewise linear functions in $\XX_\TT$ with rate at most one. The slight super convergence of the adaptive error is due to the fact that we compare with the solution on the finest grid instead of computing the (unknown) exact error. The reduced rate of the uniform mesh refinement is due to the startup singularity for non-matching boundary conditions (note that due to the finite spatial resolution, this is not a real singularity as can be observed by the increased rate of convergence in the final few steps of the uniform algorithm). Indeed, standard regularity results for parabolic PDEs show (see, e.g.,~\cite[Theorem 7.5]{evansPDE}) that for $u_0\in H^1_0(\Omega)\cap H^2(\Omega)$ and smooth right-hand side there holds $\partial_t u\in L^2(0,\tend;H^1_0(\Omega))$. The a~priori regularity for $u_0\in L^2(\Omega)$ states $u\in L^2(0,\tend;H^1_0(\Omega))$. Interpolation of those two estimates together with the fact  $u_0=1\in H_0^{1/2-\delta}(\Omega)$ leads to $\partial_t u\in H^{1/4-\delta}(0,\tend;H^1_0(\Omega))$ for all $\delta>0$ and explains the observed rate $\mathcal{O}(\#\TT^{-1/4})$ in Figure~\ref{fig:timeconv}.

\begin{figure}
\psfrag{adaptiverror}{\tiny adaptive (error)}
\psfrag{adaptiveest}{\tiny adaptive (estimator)}
\psfrag{adaptiveadaptive}{\tiny adaptive}
\psfrag{uniform}{\tiny uniform}
\psfrag{uniferror}{\tiny uniform (error)}
\psfrag{unifest}{\tiny uniform (estimator)}
\psfrag{cost}[cc][cc]{\tiny number of time steps $\#\TT$}
\psfrag{time}[cc][cc]{\tiny $t\in[0,1]$}
\psfrag{size}[cc][cc]{\tiny size of time step}
 \includegraphics[width=0.47\textwidth]{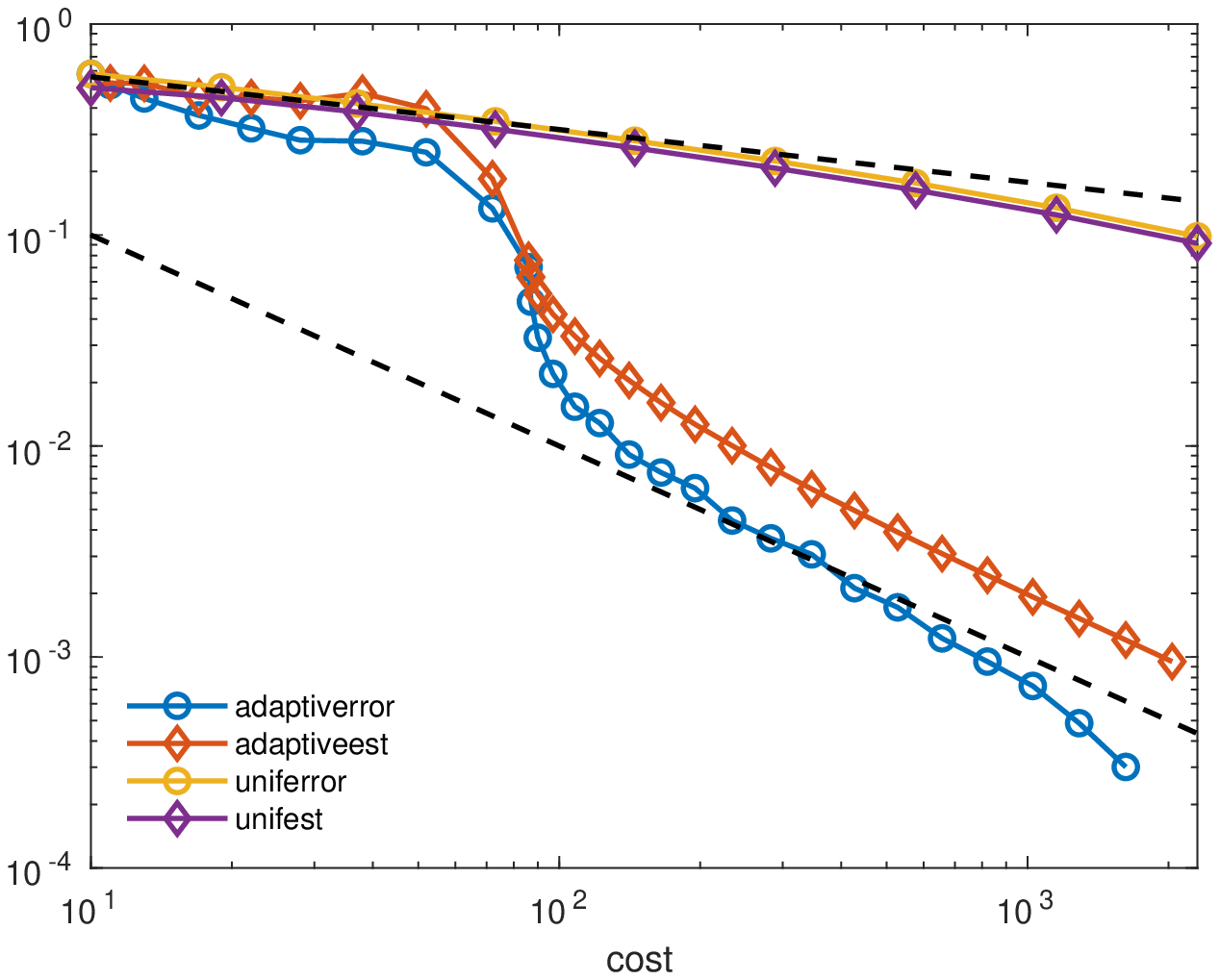}\hspace{7mm}%
 \includegraphics[width=0.47\textwidth]{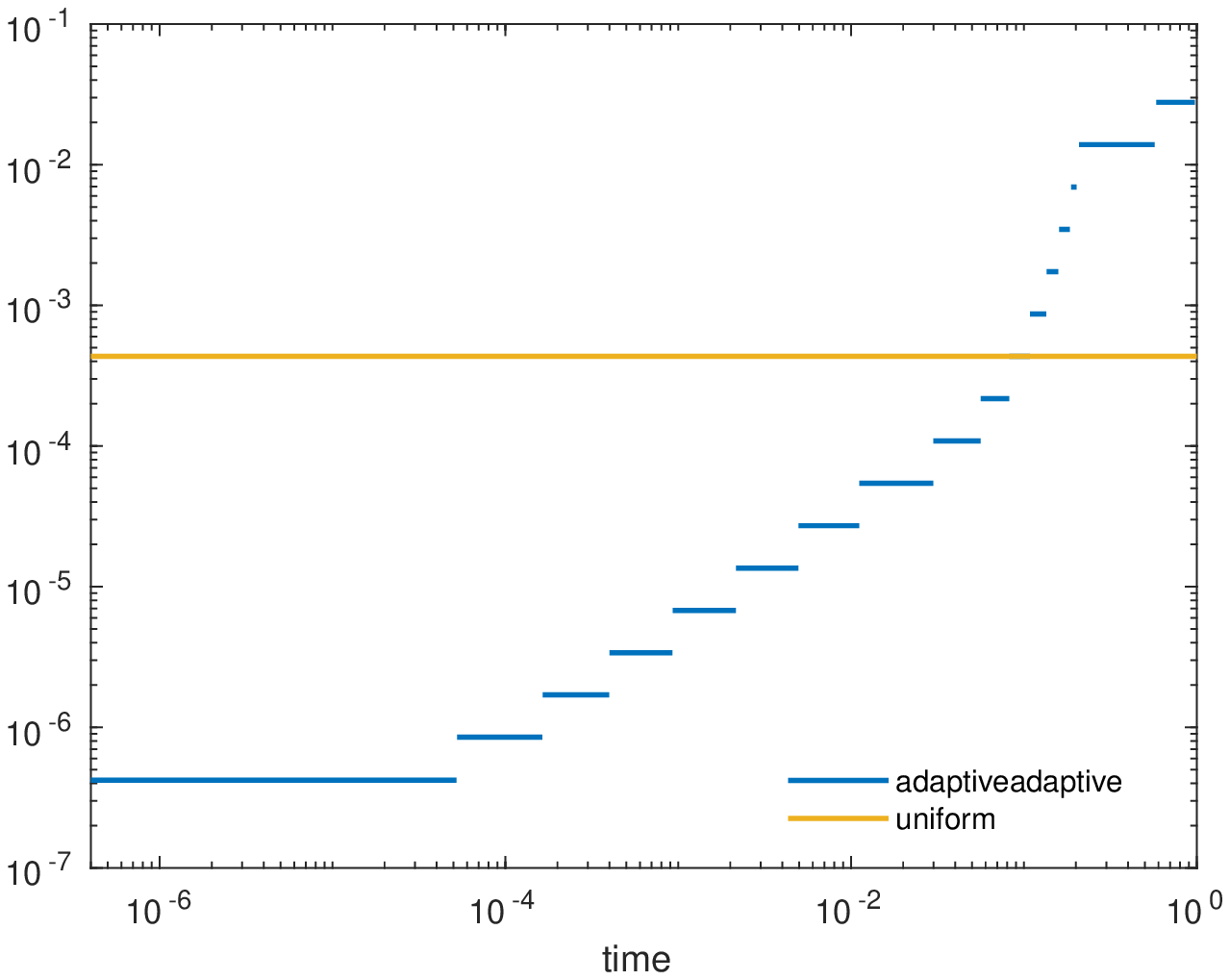}  
 \caption{(left plot) Convergence of error $\norm{u-u_\TT}{\XX}$ (comparison with finest approximation) and estimator $\eta_\TT$ for adaptive ($\theta=1/2$) and uniform mesh refinement. The dashed lines represent $\mathcal{O}(\#\TT^{-1/4})$ for uniform refinement and $\mathcal{O}(\#\TT^{-1})$ for adaptive refinement. (right plot) Sizes of local time steps of the last iteration of the adaptive/uniform algorithm plotted over their position in the time interval $[0,1]$.}
 \label{fig:timeconv}
 \end{figure}

\bibliographystyle{plain}
\bibliography{literature}
\end{document}